\documentclass[11pt]{amsart}
\usepackage{amsmath} 
\usepackage{amsthm}
\usepackage{amssymb} 
\usepackage{amsfonts}
\usepackage{graphicx}
\usepackage{color}
\usepackage[normalem]{ulem}
\usepackage{comment}
\usepackage{enumitem}

\usepackage{caption,subcaption,pinlabel}
\usepackage{graphics}

\overfullrule5pt

\newcommand{\pt}{\text{pt}}

\newtheorem{theorem}{Theorem}[section]
\newtheorem{lemma}[theorem]{Lemma}
\newtheorem{proposition}[theorem]{Proposition}
\newtheorem{question}[theorem]{Question}

\newtheorem{corollary}[theorem]{Corollary}

\theoremstyle{definition}
\newtheorem{definition}[theorem]{Definition}

\newtheorem{construction}[theorem]{Construction}

\newtheorem{goal}[theorem]{Goal}
\newtheorem{remark}[theorem]{Remark}
\newtheorem{operation}[theorem]{Operation}

\newcommand{\Z}{\mathbb{Z}}
\newcommand{\N}{\mathbb{N}}
\newcommand{\Q}{\mathbb{Q}}
\newcommand{\R}{\mathbb{R}}

\newcommand{\F}{\mathcal{F}}
\newcommand{\G}{\mathcal{G}}
\let\int\relax
\newcommand{\int}{\mathring}

\newcommand{\boundary}{\partial}

\DeclareMathOperator{\id}{{id}}
\DeclareMathOperator{\lk}{{lk}}

    \author[Maggie Miller]{Maggie Miller}
    \title[The effect of link Dehn surgery on the Thurston norm]{The effect of link Dehn surgery on the Thurston norm}
    
    \address{Princeton University, Princeton, NJ 08540, USA}\email{maggiem@math.princeton.edu}
\subjclass{57N10, 57R30}
\begin{document}

\begin{abstract}
Let $L$ be an $n$-component link ($n>1$) with pairwise nonzero linking numbers in a rational homology $3$-sphere $Y$. Assume the link complement $X:=Y\setminus\nu(L)$ has nondegenerate Thurston norm. In this paper, we study when a Thurston norm-minimizing surface $S$ properly embedded in $X$ remains norm-minimizing after Dehn filling all boundary components of $X$ according to $\boundary S$ and capping off $\boundary S$ by disks. In particular, for $n=2$ the capped-off surface is norm-minimizing when $[S]$ lies outside of a finite set of rays in $H_2(X,\boundary X;\R)$. When $Y$ is an integer homology sphere this gives an upper bound on the number of surgeries on $L$ which may yield $S^1\times S^2$. The main techniques come from Gabai's proof of the Property R conjecture and related work. 
\end{abstract}

\maketitle
\section{Introduction}\label{sec:intro}

The {\emph{Thurston norm}}, introduced by Thurston~\cite{thurston}, is a pseudonorm on the second homology of a compact $3$-manifold $M$. Specifically, if $M$ is a compact $3$-manifold (with or without boundary), then ``Thurston norm'' refers to one of two canonical continuous functions \[x_M:H_2(M;\R)\to\R,\hspace{.25in}\text{or}\hspace{.25in} x_M:H_2(M,\boundary M;\R)\to\R.\] The Thurston norm $x_M$ on $M$ is degenerate if $x_M(\alpha)=0$ for some $\alpha\neq 0$, and nondegenerate otherwise. Unless otherwise specified, we will always take homology to have real coefficients; we are uninterested in torsion classes. In this paper, we study the effect of Dehn surgery on an oriented link $L$ on the Thurston norm on $H_2(Y\setminus\nu(L),\boundary(Y\setminus\nu(L));\R)$, where $Y$ is a rational homology $3$-sphere. In particular, we prove the following theorem.

\begin{theorem}\label{mingenuscor}
Let $L=L_1\sqcup L_2$ be a $2$-component link in a rational homology $3$-sphere $Y$. Assume $\lk(L_1,L_2)\neq 0$ and that $X:=Y\setminus\nu(L)$ has nondegenerate Thurston norm $x$. Let $S$ be a norm-minimizing surface in $X$. Take $[S]$ to be primitive and in the cone $C$ on the interior of some face of the unit-norm ball of $x$.

Let $\widehat{X}$ be the closed $3$-manifold obtained by Dehn filling each component of $\boundary X$ according to the slope of $S\cap\boundary X$. Each component of $\boundary S$ can be capped off by a disk (in a Dehn-filling solid torus) to create a closed surface $\widehat{S}$ in $\widehat{X}$.

If $\widehat{S}$ is not norm-minimizing, then either $g(S)=1$ or the genus $g([S])$ is minimal among all homology classes in $C$.  In particular, if $[S']$ is a primitive class in $C$ and $\widehat{S'}$ is also not norm-minimizing, then $g([S])=g([S'])$.
\end{theorem}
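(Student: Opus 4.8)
The plan is to recast the statement as a Thurston-norm comparison across the filling and then feed it into Gabai's sutured-manifold machinery. Since $Y$ is a rational homology sphere and $L$ has two components, $H_2(X,\boundary X;\R)\cong\R^2$; the two slopes cut out by $\boundary S$ kill a single real class in $H_1(X;\R)$, so $H_2(\widehat X;\R)\cong\R$ and the capping map $\phi\colon H_2(X,\boundary X;\R)\to H_2(\widehat X;\R)$ has one-dimensional kernel. Capping a norm-minimizing surface for a class $\alpha$ gives $x_{\widehat X}(\phi(\alpha))\le x(\alpha)-b(\alpha)=2g(\alpha)-2$, where $b(\alpha)$ counts boundary curves; conversely, after isotoping a norm-minimizing surface $\widehat{S'}$ for $[\widehat S]$ to meet each filling solid torus in meridian disks and cutting along the filling tori, one gets a properly embedded $S'\subset X$ with $[S']=[S]$. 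Hence $x_{\widehat X}([\widehat S])=\min\{-\chi(S')-|\boundary S'|\}$ over surfaces $S'$ in the class $[S]$ bounded by filling-slope multicurves, and $\widehat S$ fails to minimize norm precisely when some such $S'$ carries strictly more boundary curves than $S$ (necessarily in homologically cancelling pairs), whose capping lowers $-\chi$ faster than it costs.

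Before the main argument I would isolate one elementary point: a non-norm-minimizing $\widehat S$ satisfies $g(S)\ge1$, since if $g(S)=0$ then $\widehat S$ is a sphere, its class has Thurston norm $0$, and the sphere already realizes that norm. This is exactly what makes the last sentence follow formally from the dichotomy. Granting the dichotomy, set $m=\min\{g(\beta):\beta\in C\}$. If $\widehat S$ is not norm-minimizing then $g(S)\ge1$ and, by the dichotomy, $g(S)=1$ or $g(S)=m$; since every class in $C$ has genus at least $m$, the branch $g(S)=1$ forces $m\le1$, while the branch $g(S)=m$ is compatible with $g(S)\ge1$ only when $m\ge1$. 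Checking the ranges $m=0$, $m=1$, and $m\ge2$ gives $g(S)=\max(m,1)$ in every case, a value depending only on $C$; applying this to $[S]$ and to $[S']$ yields $g([S])=g([S'])$.

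It remains to establish the dichotomy, which is where Gabai's techniques enter. Cutting $X$ along the norm-minimizing $S$ gives a taut sutured manifold (tautness being equivalent to norm-minimality once $x$ is nondegenerate); from a sutured-manifold hierarchy I would build a taut foliation of $X$ with a controlled foliation slope on each boundary torus. As in Gabai's proof of Property R, $\widehat S$ is norm-minimizing whenever this foliation extends over both filling solid tori, which fails only when a filling slope equals a degeneracy slope of the foliation. The task is to show that the cancelling-boundary competitor $S'$ from the first step forces such a coincidence, and that for a complement with nondegenerate norm a coincidence is realizable only in the two stated ways: either the complementary pieces are products except for a single one-handle, forcing $g(S)=1$, or the competitor can be converted into a genus comparison inside the cone, forcing $g([S])=m$.

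The step I expect to be the main obstacle is this final analysis: tracking how the extra boundary curves of $S'$ interact with the sutured hierarchy across both components of $\boundary X$, and ruling out intermediate degeneracies. The hypothesis $\lk(L_1,L_2)\ne0$ is what keeps $\phi$ nontrivial and, writing $\lambda=\lk(L_1,L_2)$ and $\Sigma_i$ for the Seifert-surface classes, bounds the boundary count $b(a\Sigma_1+b\Sigma_2)$ by roughly $2\abs{\lambda}$ on primitive classes while $x$ grows linearly on $C$; this is what makes $m$ attained and lets the competitor be turned into an honest comparison of genera within $C$.
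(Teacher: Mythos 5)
Your reduction of the theorem's last sentence to the dichotomy is correct (the observation that a non-norm-minimizing $\widehat S$ forces $g(S)\ge 1$, together with the three ranges of $m=\min_{\beta\in C}g(\beta)$, gives $g(S)=\max(m,1)$, hence $g([S])=g([S'])$), and your reformulation of failure of norm-minimality via a competitor $S'\subset X$ with cancelling extra boundary pairs is legitimate, granted the injectivity of $\boundary\colon H_2(X,\boundary X;\R)\to H_1(\boundary X;\R)$ coming from $\lk(L_1,L_2)\neq0$. But the dichotomy itself is the entire content of the theorem, and your third paragraph does not prove it: it states the goal (``the task is to show\ldots'') and defers exactly the analysis that constitutes the paper's proof. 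Worse, the framing there is off in a way that matters. The filling slope is \emph{by definition} the slope of $\boundary S$, i.e.\ the slope of the compact leaves of any taut foliation containing $S$ as a leaf, so failure of the foliation to extend over the filling tori is never a ``coincidence of the filling slope with a degeneracy slope'' that one could rule out; it is the default situation. The whole problem is to \emph{modify} the foliation near $\boundary X$ (Gabai's suspension-change operation, Operation~\ref{suspensionchange}) so that its restriction to $\boundary X$ becomes a product of circles of that very slope, and to analyze the obstruction when this cannot be done.

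The mechanism the paper uses for this, and for which your sketch has no substitute, is the following. Since $[S]$ is not a corner, write $c[S]=a[R]+b[T]$ with $[R],[T]$ adjacent corners, and by Proposition~\ref{cutpasteprop} take the norm-minimizing representative to be the cut-and-paste sum $aR+bT$. Each intersection arc of $R$ and $T$ produces a product disk in the complementary sutured manifold; these are organized into a fat-vertex graph whose faces give product annuli $A_i$ (Construction~\ref{construct}). Decomposing along an $A_i$, performing suspension changes along the product disks, and invoking the genus trick (Lemma~\ref{anyhomeo}) to conjugate the two boundary holonomies lets one refill the solid torus and extend to a taut foliation of $\widehat X$ with $\widehat S$ a leaf, whence $\widehat S$ is norm-minimizing by Theorem~\ref{studyfoliation}. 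Lemma~\ref{anyhomeo} needs positive-genus complementary subsurfaces, which is where the hypothesis $g(S)>1$ (and hence the $g(S)=1$ alternative) enters --- not any ``product except for a one-handle'' structure. The only residual case is that for \emph{every} face annulus $A_i$ the relevant regions are disks or genus zero, which forces some region $\Sigma_j\subset S_+$ bounded by $\boundary_+A_i$ to satisfy $g(\Sigma_j)=g(S)$; and because $\Sigma_j$ is assembled from pieces of $R$ and $T$ away from the cut locus, a homeomorphic copy of $\Sigma_j$ sits inside every cut-and-paste $xR+yT$ with $x,y>0$, giving $g(S)\le g(\beta)$ for all $\beta\in C$ (Proposition~\ref{mingenusprop}). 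Your proposal contains no analogue of this last step: without realizing $S$ as a cut-and-paste of representatives of the corner classes, there is no way to transport a subsurface of $S$ into representatives of the \emph{other} classes of $C$, so the ``genus comparison inside the cone'' you appeal to cannot get started. This is a missing idea, not a missing technicality.
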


We use the convention that in a rational homology sphere $Y$, a Seifert surface for a knot $K$ is a surface $S$ so that $[\boundary S]$ generates the kernel of $i_*:H_1(\boundary (Y\setminus\nu(K);\Z)\to H_1(Y\setminus\nu(K);\Z)$. The slope of $\boundary S$ (and any multiple) is zero. Let $c$ be the smallest positive integer with $c[K]=0$ in $H_1(Y;\Z)$. If $K'$ is another knot in $Y\setminus\nu(K)$, we say the linking number of $K$ and $K'$ is $\lk(K,K')=\frac{1}{c}\langle S,K'\rangle$.

For Theorem~\ref{mingenuscor} and the remainder of Section~\ref{sec:intro},  a surface is norm-minimizing if it maximizes Euler characteristic among all homologous surfaces and includes no nullhomologous subset of components (in homology with real coefficients). We also require that on each torus component of $\boundary X$, the boundary components of a norm-minimizing surface have parallel orientations. 

We develop the language to parse and state Theorem~\ref{mingenuscor} more precisely and concisely in Section~\ref{sec:thurston}. Theorem~\ref{mingenuscor} can be stated for links with more components as well, but to compare homology classes we require them to have different boundary slopes on each boundary component of $X$.

\begin{theorem}\label{mingenuscor2}
Let $L=L_1\sqcup \cdots\sqcup L_n$ be an $n$-component link in a rational homology $3$-sphere $Y$. Assume $\lk(L_i,L_j)\neq 0$ for each $i\neq j$. Let $X:=Y\setminus\nu(L)$. Assume $X$ has nondegenerate Thurston norm.

Let $S$ be a norm-minimizing surface meeting every component of $\boundary X$ so that $[S]$ is primitive and is contained in a cone $C$ on the interior of a face of the unit-ball of $x$. 

Let $\widehat{X}$ be the closed manifold obtained by Dehn-filling each boundary component of $X$ according to the slope of $\boundary S$. Let $\widehat{S}$ be the closed surface in $\widehat{X}$ obtained from $S$ by capping off each boundary component of $S$ by a disk in a Dehn-filling solid torus. Then at least one of the following is true:
\begin{itemize}
\item $g(S)=1$,
\item $\widehat{S}$ is norm-minimizing,
\item $g(S)\le g(\beta)$ whenever $\beta\in C$.
\end{itemize}
\end{theorem}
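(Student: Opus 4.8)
The plan is to assume that $\widehat{S}$ is not norm-minimizing and that $g(S)\geq 2$, and to deduce $g(S)\leq g(\beta)$ for every $\beta\in C$. I would first record an elementary ``un-capping'' inequality that constrains any competitor. Let $\widehat{S}'$ be a norm-minimizing surface in the class $[\widehat{S}]\in H_2(\widehat{X})$, say of genus $g'<g$, so $\chi(\widehat{S}')>\chi(\widehat{S})$. Making $\widehat{S}'$ transverse to the cores $K_i$ of the filling solid tori $V_i$ and tubing away closed components, I may assume $\widehat{S}'\cap V_i$ is a union of $m_i$ meridian disks, where $m_i$ is the geometric intersection number of $\widehat{S}'$ with $K_i$ and $b_i:=\widehat{S}\cdot K_i$ is the (coherently signed, by the parallel-orientation convention) algebraic one, so $m_i\geq b_i\geq 1$. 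Cutting yields $S':=\widehat{S}'\cap X$ with $\chi(S')=\chi(\widehat{S}')-\sum_i m_i$, and excision $H_2(\widehat{X})\to H_2(\widehat{X},\bigsqcup_i V_i)\cong H_2(X,\partial X)$ identifies $[S']=[S]$. Minimality of $S$ in $X$ then gives $\chi(S')\leq\chi(S)$, i.e. $2(g-g')\leq\sum_i(m_i-b_i)$; since $m_i-b_i$ counts canceling pairs of intersections with $K_i$, any competitor must backtrack through some core. The nonzero linking hypothesis guarantees each $K_i$ is homologically essential and each $b_i\geq 1$, so this engages every boundary torus.

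The substantive input is Gabai's foliation machinery, exactly as in the proof of Property R. Because $S$ is norm-minimizing, decomposing $X$ along $S$ begins a taut sutured-manifold hierarchy, which produces a taut foliation $\mathcal{F}$ of $X$ having $S$ as a compact leaf. Gabai's boundary control shows that for each torus $T_i$ the trace $\mathcal{F}\cap T_i$ may be taken to consist of curves of slope $\gamma_i=$ slope of $\partial S$, with at most one exceptional degeneracy slope $\delta_i$ on $T_i$ for which this is impossible. If $\gamma_i\neq\delta_i$ for every $i$, I cap off $\mathcal{F}$ by the product (meridian-disk) foliation of each $V_i$ to obtain a taut foliation $\widehat{\mathcal{F}}$ of $\widehat{X}$ with $\widehat{S}$ as a leaf; since leaves of taut foliations are norm-minimizing, this contradicts the assumption. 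Hence $\gamma_{i_0}=\delta_{i_0}$ for some $i_0$, and by the inequality above the inefficiency of any competitor is concentrated at the degenerate tori. Filling the non-degenerate tori first, as an inductive capping that preserves minimality at each stage, reduces the problem to a single degenerate boundary slope.

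It remains to convert the condition $\gamma_{i_0}=\delta_{i_0}$ into the genus statement. Using that $x$ is linear on $C$ and that a connected norm-minimizing representative of $\beta\in C$ satisfies $g(\beta)=1+\tfrac12\big(x(\beta)-b(\beta)\big)$, with $b(\beta)=\sum_i b_i(\beta)$ the boundary-component count, I would show that $\delta_{i_0}$ is exactly the slope at which the Euler-characteristic cost of adding boundary components vanishes, so that the classes with $\gamma_{i_0}(\beta)=\delta_{i_0}$ are precisely those minimizing $x(\beta)-b(\beta)$, and hence $g(\beta)$, along the finitely many degenerate rays in $C$. This gives $g(S)\leq g(\beta)$ for all $\beta\in C$ and, since all such bad classes lie on these extremal rays, that they share a single genus. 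The genus-$1$ case must be excluded because the degenerate piece of the hierarchy is an essential annulus or torus: when $g(S)=1$ the leaf $\widehat{S}$ is itself a torus, whose minimality is governed by $x=0$ rather than by the bookkeeping above, and the argument breaks.

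The main obstacle is this final step: pinning down the degeneracy slope $\delta_i$ from the sutured hierarchy and proving that it coincides with the genus-minimizing slope on $C$. This requires the precise combinatorics of norm-minimizing surfaces along the face developed in Section~\ref{sec:thurston}, together with Gabai's description of the boundary behavior of $\mathcal{F}$, and it is where the ``different boundary slopes on each boundary component'' hypothesis is used to ensure the slope data varies linearly and the genus function stays controlled. Coordinating the inductive capping over all $n$ tori, and checking that filling the non-degenerate tori creates no new degeneracies on the remaining ones, is the technical heart of the multi-component generalization.
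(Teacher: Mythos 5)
There is a genuine gap, in two places. First, the ``boundary control'' dichotomy you rely on --- that the taut foliation $\mathcal{F}$ coming from the hierarchy can be arranged so that its trace on each boundary torus $T_i$ consists of closed curves of the slope of $\partial S$, except for at most one exceptional ``degeneracy slope'' $\delta_i$ per torus --- is not a result you can quote, and it is essentially the entire difficulty of the theorem. Theorem~\ref{havefoliation1} gives a taut foliation with $S$ as a leaf and no Reeb components in $\partial X$, but its restriction to a boundary torus is in general a suspension of a nontrivial homeomorphism, not a family of compact circles, and whether it can be modified to be compact (so that the meridian-disk capping works) depends on the whole class $[S]$, not on one slope per torus. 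The per-torus finiteness results of Sela and Baker--Taylor (Theorem~\ref{baker}) concern a single filling and, as the remark following Theorem~\ref{2compthm} explains, cannot be iterated to control fillings of all components simultaneously, because all the boundary slopes are coupled through $[S]$. The paper's actual mechanism is different: it writes $cS$ as a cut-and-paste sum $aR+bT$ of properly norm-minimizing surfaces representing \emph{adjacent corners} of the norm ball, records the product disks arising from $R\cap T$ in a fat-vertex graph, builds product annuli from the faces of that graph, and uses the suspension-change operation together with $I$-bundle replacement (Lemma~\ref{anyhomeo}) to extend the foliation over $\widehat{X}$ unless every such annulus has separating boundary bounding a genus-zero or genus-$g(S)$ region (the ``type 3(iii)'' configuration).

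Second, the step you yourself flag as ``the main obstacle'' --- converting the degeneracy condition into $g(S)\le g(\beta)$ for all $\beta\in C$ --- is exactly where the content of the theorem lives, and your proposal does not carry it out; it is stated as a plan. In the paper this step is nearly immediate once the type-3(iii) structure is in hand: the bad product annulus bounds a genus-$g(S)$ subsurface $\Sigma$ of $S_+$ produced by the cut-and-paste, and since every class $\beta\in C$ is represented by $a'R+b'T$ with $a',b'>0$, that representative contains a subregion homeomorphic to $\Sigma$, whence $g(\beta)\ge g(S)$. Without the cut-and-paste and fat-vertex-graph structure there is no candidate for $\Sigma$, and your assertion that the classes meeting the degeneracy slope are precisely those minimizing $x(\beta)-b(\beta)$ is unsubstantiated. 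Your un-capping inequality is correct but plays no role in closing the argument. One reduction you gesture at is genuinely needed and appears in the paper: when $\partial R$ and $\partial T$ share a slope on some $P_i$ (or one misses $P_i$), that torus is filled first using Theorem~\ref{fillonethm2} and the norm-minimality of the capped $R$, $T$, and $S'$ is tracked through the filling; but in your write-up this inductive capping is keyed to the unsupported degeneracy-slope dichotomy rather than to the corner decomposition, so it cannot be executed as stated.
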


%

We will prove Theorem~\ref{mingenuscor2} in Section~\ref{sec:ncomp}.

\begin{remark}
In Theorem~\ref{mingenuscor2} (and later observations about link complements), we require a surface to meet every boundary component of $X$ because we are interested in surgering every component of a link. If a surface $S$ in $X=Y\setminus\nu(L)$ does not meet every boundary component of $X$, then we may restrict $L$ to a sublink $L'$ so that $S\subset X':=Y\setminus\nu(L')$ and $S$ does meet every boundary component of $X'$. Then we may apply Theorem~\ref{mingenuscor2} (or other relevant theorems) to the link $L'$ in $Y$.

Let $\mathcal{S}_P$ be the set of norm-minimizing surfaces not meeting some boundary component $P$ of $X$. Work of Sela~\cite{sela} implies that for all but finitely many choices of slope on $P$, filling $P$ and capping off a surface in $\mathcal{S}_P$ will yield a norm-minimizing surface. We do not separately describe Sela's theorem as it requires some more technical discussion, but the reader may refer to~\cite{sela} (expanding on work of Gabai~\cite{ft3m2}) if interested in Dehn fillings on boundary torus components which do {\emph{not}} meet some norm-minimizing surface.
\end{remark}

The primary motivation for Theorem~\ref{mingenuscor} is to study the total number of norm-minimizing surfaces $S\subset X$ (up to homology) so that $\widehat{S}$ are not norm-minimizing. We state the corresponding result first for $2$-component links and then in generality; this is not strictly necessary but aids in readability.

\begin{theorem}\label{2compthm}
Let $L=L_1\sqcup L_2$ be a $2$-component link in a rational homology $3$-sphere $Y$. Assume $\lk(L_1,L_2)\neq 0$ and that $X:=Y\setminus\nu(L)$ has nondegenerate Thurston norm.

Let $S$ be a norm-minimizing surface in $X$ with $[S]$ primitive. Let $\widehat{X}$ be the closed $3$-manifold obtained from $X$ by Dehn-filling both components $P_i=\boundary(\nu(L_i))$ of $\boundary X$ according to the slope $\boundary S\cap P_i$. Let $\widehat{S}$ be the closed surface in $\widehat{X}$ obtained by capping off each component of $\boundary S$ with a disk in one of the Dehn-filling solid tori.

There exists a finite set $E\subset H_2(X,\boundary X;\mathbb{Z})$ so that if $[S]\not\in E$, then $\widehat{S}$ is norm-minimizing.

\end{theorem}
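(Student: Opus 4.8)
The plan is to deduce Theorem~\ref{2compthm} from Theorem~\ref{mingenuscor} by a counting argument over homology classes in the norm ball. The key observation is that Theorem~\ref{mingenuscor} already tells us that for any primitive class $[S]$ in the open cone $C$ on the interior of a top face of the unit-norm ball, if $\widehat{S}$ fails to be norm-minimizing then either $g(S)=1$ or $g([S])$ is minimal among all classes in $C$. Since the unit-norm ball of a nondegenerate Thurston norm is a compact polytope with finitely many faces, there are only finitely many such cones $C$ to consider, so it suffices to bound the number of ``bad'' primitive classes inside each individual cone $C$.

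First I would fix one top-dimensional face of the unit-norm ball and its open cone $C$, and split the bad classes into two types according to the dichotomy in Theorem~\ref{mingenuscor}. For the genus-one case $g(S)=1$, I would argue that $\{\beta\in C: g(\beta)=1\}$ is finite: because $x$ is nondegenerate, the Thurston norm is a genuine norm, so $x$ is bounded below by a positive multiple of a fixed norm on $H_2(X,\partial X;\R)$; since $x$ is linear on $C$ and $2g(\beta)-2 = x(\beta)$ up to the correction for boundary components (which is controlled once the face, hence the boundary slopes, is fixed), the condition $g(\beta)=1$ confines $\beta$ to a bounded region, and that region contains only finitely many integral (hence finitely many primitive) classes. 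For the minimal-genus case, the last sentence of Theorem~\ref{mingenuscor} is the essential input: it asserts that any two primitive bad classes $[S],[S']\in C$ satisfy $g([S])=g([S'])$, so all bad classes of this second type lie on the single affine slice $\{\beta\in C: x(\beta)=x_0\}$ for one fixed value $x_0$. This slice is a bounded polytope (again by nondegeneracy and linearity of $x$ on $C$), so it too contains only finitely many integral points and hence finitely many primitive classes.

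Assembling these, within each cone $C$ the set of primitive classes $[S]$ with $\widehat{S}$ not norm-minimizing is contained in the union of the two finite sets above, hence is finite. Taking the union over the finitely many cones $C$ associated to top faces of the norm ball produces the desired finite set $E\subset H_2(X,\partial X;\Z)$: for $[S]\notin E$ the surface $\widehat{S}$ is norm-minimizing. I would only need to remark that primitive classes lying on lower-dimensional faces (the cones over lower faces) can be folded into this count, either by noting there are finitely many such boundary faces each handled identically, or by absorbing them into $E$ directly.

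The main obstacle I anticipate is making the genus-one bound rigorous in the presence of boundary. Over $H_2(X,\partial X;\R)$ the Thurston norm records $-\chi$ of a properly embedded surface, and converting $x(\beta)$ into a statement about the genus $g(\widehat{\beta})$ of the capped-off closed surface requires accounting for the number of boundary components of $S$ on each torus $P_i$, which in turn depends on the slope and hence varies across $C$. The cleanest route is to observe that all classes in a fixed cone $C$ share the same boundary slopes (this is precisely why faces, not arbitrary classes, are the right unit of bookkeeping), so on $C$ the relationship between $x(\beta)$, the number of boundary curves, and $g(\beta)$ is governed by a single affine formula; once that formula is pinned down, the finiteness claims reduce to the elementary fact that a bounded region in a finite-dimensional real vector space contains finitely many lattice points. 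I would take care to state the boundary-component count carefully, since an error there is the most likely place for the argument to break.
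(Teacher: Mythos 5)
Your overall strategy mirrors the paper's: feed Theorem~\ref{mingenuscor} into a face-by-face finiteness count, handling separately the genus-one classes and the minimal-genus classes in each cone, and absorbing corners into $E$ directly. However, there is a genuine gap at exactly the point you flagged as most fragile, and your proposed fix does not work. The claim that ``all classes in a fixed cone $C$ share the same boundary slopes'' is false. Writing $\beta = p\alpha_1 + q\alpha_2$ with $\alpha_i$ the class of a punctured Seifert surface for $L_i$, the boundary slope of $\beta$ on $P_1$ is $-(qm_2\lk)/(pm_1)$ (in the paper's notation), which varies with the ratio $p/q$; indeed, since $\lk(L_1,L_2)\neq 0$ and $Y$ is a rational homology sphere, the map $\boundary:H_2(X,\boundary X;\R)\to H_1(\boundary X;\R)$ is injective, so distinct rays in $C$ have distinct boundary data. (This is the whole point of the theorem: the filling $\widehat{X}$ changes as $[S]$ moves within $C$.) Consequently the number of boundary components $|\boundary S|$ is not given by an affine formula on $C$, and without control on it the condition $g(\beta)=1$ does \emph{not} confine $\beta$ to a bounded region: for a genus-one class one has $x(\beta)=|\boundary S|$, so if $|\boundary S|$ were unbounded along a sequence of primitive classes there would be no contradiction with infinitely many genus-one classes. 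The same issue breaks your claim that the minimal-genus bad classes lie on a single affine slice $\{\beta\in C : x(\beta)=x_0\}$: equal genus does not imply equal norm when the boundary counts differ.

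What is missing is precisely the paper's Proposition~\ref{g0prop}: a uniform bound on $|\boundary S|$ over all primitive classes, depending only on $Y$ and $L$. The paper obtains this arithmetically: with $p,q$ coprime and $m_i$ the smallest positive integer with $m_i[L_i]=0\in H_1(Y;\Z)$, the surface has $\gcd(pm_1,qm_2\lk)$ boundary components on $P_1$ and $\gcd(pm_1\lk,qm_2)$ on $P_2$, giving
\[
|\boundary S|\le 2|\lk|\,m_1^2m_2^2 .
\]
Note that this is where the hypotheses $\lk(L_1,L_2)\neq0$ and coprimality of $p,q$ do real work. Once this bound is in place, your counting argument goes through essentially as written: fixed genus plus bounded boundary count gives a uniform bound on $x$, and nondegeneracy of $x$ then yields only finitely many integral (hence primitive) classes of bounded norm --- which is exactly how the paper concludes.
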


In the conclusion of Theorem~\ref{2compthm}, we also obtain explicit upper bounds on $|E|$ which depend on $Y$ and $L$. Some bounds will be described in Corollaries~\ref{scholium},~\ref{cor1} and~\ref{cor2}. Theorem~\ref{ncompthm} is related to the Property R theorem of Gabai~\cite{ft3m3}: surgery on a nontrivial knot in $S^3$ cannot yield $S^1\times S^2$. (We discuss this further in Section~\ref{sec:thurston}.)

\begin{corollary}\label{proprcor}
Let $L=L_1\sqcup L_2$ be a $2$-component link in a homology $3$-sphere $Y$. Assume $\lk(L_1\sqcup L_2)\neq 0$ and that any annulus properly embedded in $X:=Y\setminus\nu(L)$ represents $0\in H_2(X,\boundary X;\R)$. 
Then there are at most finitely many surgeries on $L$ which yield $S^1\times S^2$. We give explicit bounds on this finite number in Corollaries~\ref{scholium},~\ref{cor1} and~\ref{cor2}. \end{corollary}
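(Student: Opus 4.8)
The plan is to reduce to Theorem~\ref{2compthm}, using the hypothesis on annuli both to make that theorem applicable and to excise the one degenerate configuration it cannot see. First I would observe that the assumption that every properly embedded annulus in $X$ represents $0\in H_2(X,\boundary X;\R)$ is exactly the nondegeneracy condition needed: a nonzero class of vanishing norm would be carried by a surface all of whose components have nonnegative Euler characteristic, and after discarding spheres and nullhomologous tori one is left with essential annuli carrying the class, which the hypothesis forbids. Hence $x$ is nondegenerate and Theorem~\ref{2compthm} supplies a finite set $E\subset H_2(X,\boundary X;\Z)$ so that $\widehat{S}$ is norm-minimizing whenever $[S]\notin E$.

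Now suppose a surgery on $L$, with slopes $\gamma_1$ on $P_1$ and $\gamma_2$ on $P_2$, yields $\widehat{X}\cong S^1\times S^2$. Since $H_2(S^1\times S^2;\R)=\R$ is generated by a nonseparating genus-$0$ sphere, I would isotope this sphere to meet the two surgery cores transversely and minimally and set $S_0$ to be its intersection with $X$. Then $S_0$ is properly embedded, its boundary slope on each $P_i$ is exactly $\gamma_i$, and $[S_0]\neq 0$ because capping $S_0$ off recovers the generator. Let $[S]$ be the primitive class on the ray $\R_{>0}[S_0]$ and let $S$ be a norm-minimizing surface representing it; its boundary slopes are determined by $[S]$, hence again equal $(\gamma_1,\gamma_2)$, while $\widehat{S}$ represents a nonzero class in $H_2(S^1\times S^2)$. (If $S$ fails to meet some $P_i$ one passes to the appropriate sublink as in the Remark following Theorem~\ref{mingenuscor2}; using $\lk(L_1,L_2)\neq 0$ one expects the generating sphere to meet both cores.)

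The crux is that the Thurston norm of $S^1\times S^2$ vanishes identically, so a closed surface representing a nonzero class there is norm-minimizing precisely when it is a union of $2$-spheres; in particular no positive-genus closed surface representing a nonzero class is norm-minimizing. Because capping off does not change genus, if the norm-minimizing surface $S$ has a component of positive genus then $\widehat{S}$ is not norm-minimizing, and the contrapositive of Theorem~\ref{2compthm} gives $[S]\in E$. As $[S]$ determines the pair $(\gamma_1,\gamma_2)$, the assignment from such surgeries to $E$ is finite-to-one, so the number of these surgeries is at most $|E|$.

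It remains to rule out the case in which the norm-minimizing $S$ is planar, for then $\widehat{S}$ is a $2$-sphere, is norm-minimizing in $S^1\times S^2$, and Theorem~\ref{2compthm} gives no information; this is the main obstacle. Here I would invoke the annulus hypothesis directly. Cutting $\widehat{X}=S^1\times S^2$ along the nonseparating sphere $\widehat{S}$ produces the product $S^2\times I$, inside which the two cores appear as collections of spanning arcs, with at least one arc from each core since $S$ meets both $P_1$ and $P_2$. From this product structure I would extract a properly embedded essential annulus $A\subset X$ joining $P_1$ to $P_2$, with boundary slopes $\gamma_1,\gamma_2$, and show $[A]\neq 0$—for instance by computing $\boundary[A]=[\gamma_1]+[\gamma_2]\in H_1(\boundary X)$ and checking it survives in $H_1(X)$ under the constraint that the filling give $H_1(\widehat{X})=\Z$. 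This would contradict the hypothesis that every annulus in $X$ represents $0$, excluding the planar case. The delicate point, which I expect to be hardest, is producing a genuinely essential annulus when the spanning arcs are knotted or linked in $S^2\times I$, where a naive level annulus need not exist; I anticipate this requires the sutured-manifold and taut-foliation techniques of Gabai, reading the planar capped-off configuration as forcing a product sutured structure on the complement of $S$ and hence an essential annulus. Granting this, every surgery yielding $S^1\times S^2$ falls into the positive-genus case, so their number is bounded by $|E|$, with the explicit estimates coming from Corollaries~\ref{scholium},~\ref{cor1} and~\ref{cor2}.
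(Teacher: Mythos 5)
Most of your argument tracks the paper's: the annulus hypothesis forces nondegeneracy (closed surfaces die in $H_2(X,\boundary X;\R)$ and $\lk(L_1,L_2)\neq 0$ rules out essential disks, so a degenerate class would have to be carried by annuli), injectivity of $\boundary$ identifies each candidate surgery with a primitive class $[S]$, and in the positive-genus case $\widehat{S}$ cannot be norm-minimizing in $S^1\times S^2$, so $[S]$ lies in the finite set $E$ of Theorem~\ref{2compthm}. The gap is your treatment of the planar case. You attempt to show it cannot occur by extracting from the spanning-arc picture in $S^2\times I$ an essential annulus joining $P_1$ to $P_2$ with nonzero class, contradicting the hypothesis; you correctly flag this extraction as the hard step, but in fact no such argument can exist, because the statement you are aiming for is false. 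One can choose three coherently oriented spanning arcs in $S^2\times I$ so knotted and linked that their exterior is hyperbolic, glue $S^2\times 1$ to $S^2\times 0$ to obtain $S^1\times S^2$ containing a two-component link of cores, and surger those cores (with suitable framings) to produce a homology sphere $Y$ and a link $L$ satisfying every hypothesis of the corollary: here $X$ is hyperbolic, so every properly embedded annulus is compressible or boundary-parallel and hence nullhomologous, yet the planar surface (the $2$-sphere minus three disks) is norm-minimizing, caps off to a sphere, and its boundary slopes give a surgery yielding $S^1\times S^2$. So the planar configuration is genuinely compatible with the hypotheses and cannot be excluded.

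The paper's resolution is different and does not exclude anything: it \emph{counts} the planar classes. Proposition~\ref{g0prop} shows that for fixed genus $g$ (in particular $g=0$) there are only finitely many primitive classes of genus $g$, because the number of boundary components of a properly norm-minimizing representative is uniformly bounded in terms of the linking number, hence genus-$0$ classes have uniformly bounded Thurston norm, and nondegeneracy then gives finiteness. The corollary follows by bounding the surgeries yielding $S^1\times S^2$ by $|E|$ plus the (finite) number of genus-$0$ primitive classes. Replacing your final paragraph with an appeal to Proposition~\ref{g0prop} repairs the argument; as written, the proof is incomplete at exactly the step you identified as delicate, and no completion of that step is possible.
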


In Corollary~\ref{proprcor}, we are implicitly using the fact that there are finitely many nontrivial elements of $\alpha\in H_2(X;\R)$ whose norm-minimizing representatives have genus zero. We prove this later in Proposition~\ref{g0prop}.

We prove Theorem~\ref{2compthm} from Theorem~\ref{mingenuscor} very quickly in Section~\ref{sec:sutured}. We give examples of $2$-component links in $S^3$ in which $E$ is nonempty in the setting of Theorem~\ref{2compthm} in Figures~\ref{fig:example1} and~\ref{fig:example2}. (In both cases, we obtained the Thurston norm on $S^3\setminus\nu(L)$ from work of McMullen~\cite{mcmullen}, who computed the Thurston norm on complements of nearly all links with at most nine crossings, including these two examples.)

\begin{figure}
\begin{centering}
\labellist
\small\hair 2pt
\pinlabel $L_1$ at 0 175
\pinlabel $L_2$ at 130 175
\pinlabel $S_1$ at 200 100
\pinlabel $S_2$ at 445 100
\pinlabel $\textcolor{red}{[S_1]}$ at 685 105
\pinlabel $\textcolor{blue}{[S_2]}$ at 565 175
\endlabellist
\includegraphics[width=100mm]{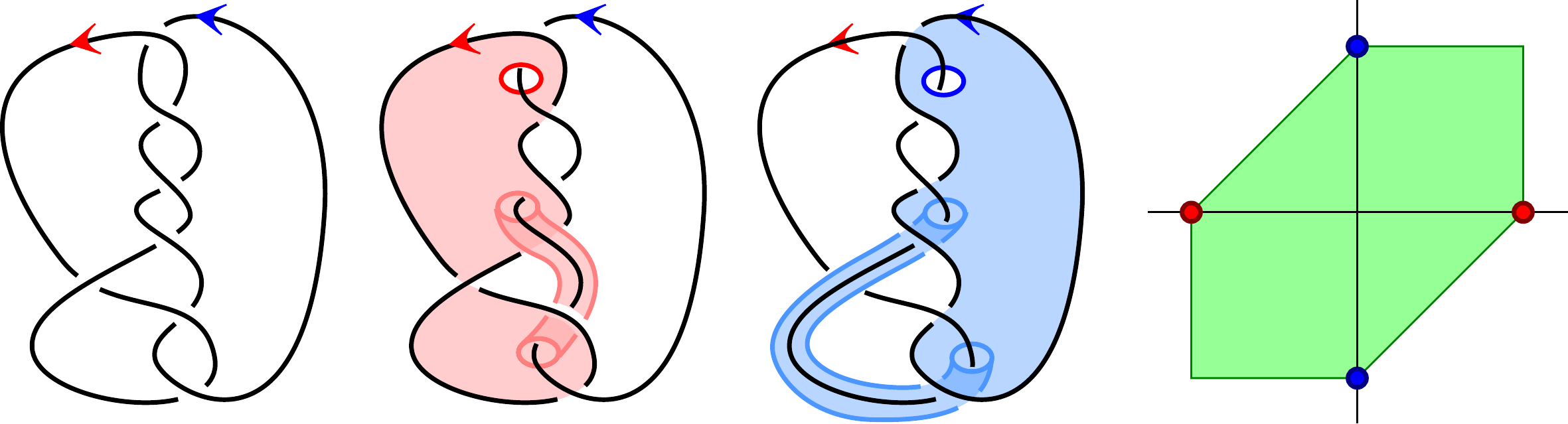}
\caption{{\bf{First:}} A $2$-component link $L=L_1\sqcup L_2$ in $S^3$. {\bf{Second:}} A norm-minimizing surface $S_1$ in the homology class of a punctured Seifert surface for $L_1$. {\bf{Third:}} A norm-minimizing surface in the homology class of a punctured surface for $L_2$. {\bf{Fourth:}} The unit ball of the Thurston norm on $S^3\setminus\nu(L)$, in which we indicate $[\pm S_1]$ and $[\pm S_2]$. The surfaces $\widehat{S_1}$ and $\widehat{S_2}$ are not norm-minimizing.}\label{fig:example1}
\end{centering}
\end{figure}

\begin{figure}
\begin{centering}
\labellist
\small\hair 2pt
\pinlabel $L_1$ at 30 75
\pinlabel $L_2$ at 125 155
\pinlabel $1$ at 200 75
\pinlabel $4$ at 175 150
\pinlabel $0$ at 340 150
\pinlabel $2[S_1]+[S_2]$ at 680 170
\pinlabel $\textcolor{red}{[S_1]}$ at 660 90
\pinlabel $\textcolor{blue}{[S_2]}$ at 570 175
\endlabellist
\includegraphics[width=100mm]{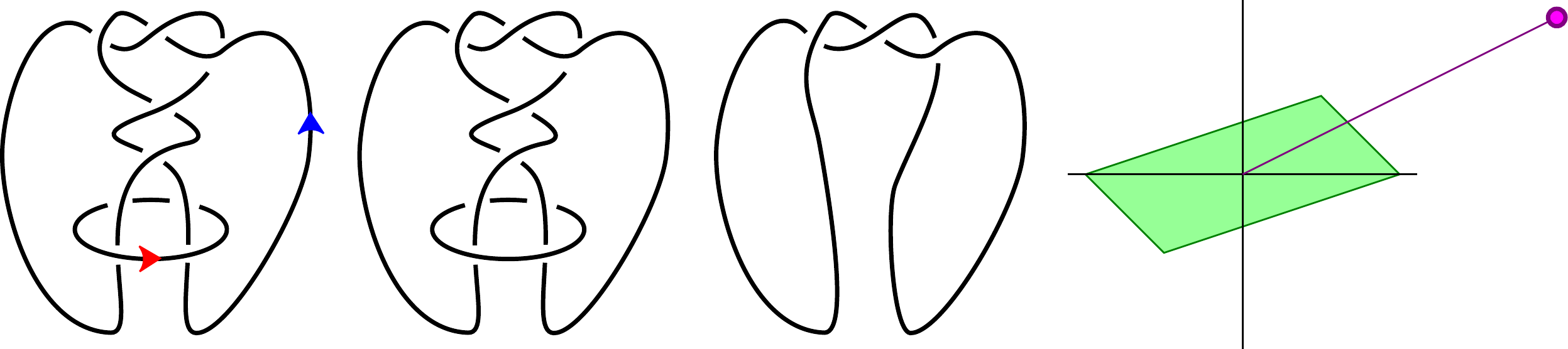}
\caption{{\bf{First:}} A $2$-component link $L=L_1\sqcup L_2$ in $S^3$. {\bf{Second:}} Let $S_i$ be a surface in the homology class of a punctured Seifert surface for $L_i$. We indicate the boundary slopes of $2[S_1]+[S_2]$. {\bf{Third:}} Surgery on $L_i$ according to these slopes yields $S^1\times S^2$. {\bf{Fourth:}}  The unit ball of the Thurston norm on $S^3\setminus\nu(L)$, in which we highlight $2[S_1]+[S_2]$. A norm-minimizing surface $S$ in this homology class is genus-$1$ with three boundary components, but $[\widehat{S}]$ is represented by a $2$-sphere, so the torus $\widehat{S}$ is not norm-minimizing.}\label{fig:example2}
\end{centering}
\end{figure}

\begin{theorem}\label{ncompthm}
Let $L=L_1\sqcup\cdots\sqcup L_n$ be an $n$-component link $(n>1)$ in a rational homology $3$-sphere $Y$. Assume $\lk(L_i,L_j)\neq 0$ for $i\neq j$ and that any annulus properly embedded in $X:=Y\setminus\nu(L)$ represents $0\in H_2(X,\boundary X;\R)$.

Let $S$ be a norm-minimizing surface in $X$ meeting every component of $\boundary S$. Let $\widehat{X}$ be the closed manifold obtained from $X$ by Dehn filling $X$ according to $\boundary S$, and let $\widehat{S}\subset\widehat{X}$ be the closed surface obtained from capping off each boundary component of $S$ by a disk within the Dehn-filling solid tori.

Let $\widetilde{Y}$ be the $3$-manifold obtained from $Y$ by surgering $Y$ along $L_3\sqcup\cdots\sqcup L_n$ according to $\boundary S$.

There exists an $(n-2)$-dimensional set of rays $E$ from the origin of $H_2(X,\boundary X;\R)\cong\R^n$ so that if $[S]\not\in E$, then either $\widehat{S}$ is norm-minimizing or $\widetilde{Y}\setminus\nu(L_1\sqcup L_2)$ has degenerate Thurston norm.
\end{theorem}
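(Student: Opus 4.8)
The plan is to reduce Theorem~\ref{ncompthm} to the two-component case (Theorem~\ref{2compthm}) by using the surgeries on $L_3\sqcup\cdots\sqcup L_n$ to pass from the $n$-component link complement $X$ to the two-component link complement $\widetilde{X}:=\widetilde{Y}\setminus\nu(L_1\sqcup L_2)$. First I would observe that, because $\widetilde{Y}$ is obtained from $Y$ by filling the tori $P_3,\dots,P_n$ along the slopes of $\boundary S$, the surface $S\cap\widetilde{X}$ (capping off the boundary components of $S$ on $P_3,\dots,P_n$ by disks) is a properly embedded surface $\widetilde{S}$ in $\widetilde{X}$ whose class $[\widetilde{S}]$ is the image of $[S]$ under the natural map $H_2(X,\boundary X;\R)\to H_2(\widetilde{X},\boundary\widetilde{X};\R)$, and that $\widehat{S}$ computed from $S$ in $X$ is the same closed surface as $\widehat{\widetilde{S}}$ computed from $\widetilde{S}$ in $\widetilde{X}$. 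The key point is that capping off disks on $P_3,\dots,P_n$ does not change genus (disks have $\chi=1$), so $g(\widetilde{S})=g(S)$ provided $\widetilde{S}$ remains norm-minimizing in $\widetilde{X}$.

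The heart of the argument is therefore to show that for $[S]$ outside a controlled $(n-2)$-dimensional set of rays $E$, the surface $\widetilde{S}$ is norm-minimizing in $\widetilde{X}$ and $[\widetilde{S}]$ lands inside the cone on the interior of a face of the unit-norm ball of $\widetilde{X}$, so that Theorem~\ref{2compthm} applies directly. The set $E$ arises as follows: $H_2(X,\boundary X;\R)\cong\R^n$, and the filling slopes of $\boundary S$ along $P_3,\dots,P_n$ vary with $[S]$; the condition that $\widetilde{S}$ fails to be norm-minimizing, or that $\widetilde{X}$ has degenerate Thurston norm, should be forced only on the $(n-2)$-parameter family of rays along which the slopes $\boundary S\cap P_i$ for $i\ge 3$ degenerate. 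Concretely, the two linking constraints $\lk(L_1,L_2)\neq 0$ together with the $n-2$ remaining directions of freedom in choosing $[S]$ produce the claimed codimension, and the hypothesis that every properly embedded annulus in $X$ is nullhomologous is exactly what rules out degeneracy for generic slopes. I would make $E$ precise by describing it as the preimage in $H_2(X,\boundary X;\R)$ of the bad locus under the slope-assignment maps to $P_3,\dots,P_n$.

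I expect the main obstacle to be controlling when $\widetilde{X}$ has nondegenerate Thurston norm and when $\widetilde{S}$ remains norm-minimizing after the fillings along $P_3,\dots,P_n$. These are exactly the hypotheses required to invoke Theorem~\ref{2compthm}, and they need not persist for every choice of $[S]$; the whole content of the ``$(n-2)$-dimensional set of rays $E$'' is to quarantine the directions where they fail. I would handle this by a sutured-manifold / taut-foliation argument in the spirit of Gabai's work: a norm-minimizing $S$ in $X$ corresponds to a taut sutured structure, and Dehn filling $P_3,\dots,P_n$ along $\boundary S$ preserves tautness away from a finite set of exceptional slopes on each $P_i$ (this is the same mechanism underlying the finiteness of $E$ in Theorem~\ref{2compthm} and in Sela's theorem). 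The exceptional slopes on each of the $n-2$ filled tori contribute one degenerate direction each, accumulating to the stated $(n-2)$-dimensional bad set of rays; the degenerate-norm alternative in the conclusion precisely accounts for those rays on which $\widetilde{X}$ itself becomes degenerate. The delicate step is verifying that the slope-assignment maps are transverse to the exceptional loci so that the bad set really has dimension $n-2$ and not higher, which I would establish using the nonvanishing linking numbers to guarantee that the boundary slopes vary nondegenerately with $[S]$.
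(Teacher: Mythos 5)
Your overall architecture---fill $P_3,\ldots,P_n$ along $\boundary S$, pass to $\widetilde{X}=\widetilde{Y}\setminus\nu(L_1\sqcup L_2)$, and quote Theorem~\ref{2compthm}---is sound, and it is essentially a non-inductive repackaging of what the paper does (the paper fills one component at a time and inducts on $n$, with Theorem~\ref{2compthm} as the base case). The genuine gap is in the justification of your central step, that $\widetilde{S}$ remains norm-minimizing in $\widetilde{X}$ for $[S]$ outside an $(n-2)$-dimensional set of rays. You attribute this to a Sela-type statement, ``Dehn filling $P_3,\ldots,P_n$ along $\boundary S$ preserves tautness away from a finite set of exceptional slopes on each $P_i$.'' This is the wrong mechanism on two counts. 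First, Sela's theorem (and the Gabai result it extends) concerns fillings of boundary tori that do \emph{not} meet the surface; here $S$ meets every $P_i$ by hypothesis, so that machinery is unavailable. Second, the exceptional locus for this step is not of the form ``finitely many slopes on each $P_i$'': the correct tool is Theorem~\ref{fillonethm2} (Gabai, J.~Rasmussen), which says that if $[S]$ is \emph{not a corner of the Thurston norm on $X$}, then capping $S$ off on any $k\le n-1$ boundary tori yields a norm-minimizing surface; it is proved by writing $cS=aR+bT$ for adjacent corners $[R],[T]$ and trading the $2$-handle attachments along sutures for product-disk decompositions (Lemma~\ref{lemmadiskdecomp}). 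The excluded set is thus the cone on the non-top-dimensional faces of the norm ball, which is $(n-2)$-dimensional as a set of rays but is \emph{not} contained in finitely many fixed-slope hyperplanes, so the ``bad slopes plus transversality'' structure your dimension count rests on is not the actual structure of the bad set (the dimensions merely happen to agree). Moreover the corner exclusion is not optional: the remark following Theorem~\ref{fillonethm} and Figure~\ref{fig:example1} exhibit a corner class for which capping off after filling a single component already produces a compressible surface. Your proposal never excludes the corners, so as written the key step fails exactly there.

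The repair is straightforward and brings you in line with the paper: put the corners of $x$ into $E$; invoke Theorem~\ref{fillonethm2} (with $k=n-2$) to get that $\widetilde{S}$ is norm-minimizing in $\widetilde{X}$; add to $E$ the rays on which $\widetilde{Y}$ fails to be a rational homology sphere and those on which $\lk(L_1,L_2)$ becomes zero in $\widetilde{Y}$ (for a single filling of slope $q$ on $P_n$ the paper computes the new linking numbers as $\lk_{ij}-q\lk_{in}\lk_{jn}$ and excludes the finitely many slopes killing them---each such condition is an $(n-2)$-dimensional set of rays because the pairwise linking numbers make $\boundary$ injective); finally, for each tuple of filling slopes pull back the finite set of Theorem~\ref{2compthm}, a finite set of rays over each point of an $(n-2)$-parameter family, hence $(n-2)$-dimensional in total. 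The paper organizes this same content as an induction, filling $P_n$ first and citing the $(n-1)$-component statement, which keeps the linking-number and rational-homology-sphere bookkeeping to one surgery at a time; your all-at-once reduction is a legitimate alternative once Theorem~\ref{fillonethm2} replaces the Sela-type claim, at the cost of having to control how the hypotheses of Theorem~\ref{2compthm} depend on the whole tuple $(q_3,\ldots,q_n)$ at once.
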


\begin{remark}
In Theorem~\ref{ncompthm}, we may reorder the components of $L$ to apply the theorem, if this causes $\widetilde{Y}\setminus\nu(L_1\sqcup L_2)$ to have nondegenerate Thurston norm. 
\end{remark}

In the statement of Theorem~\ref{ncompthm}, we describe $E$ as a set of rays in $H_2(X,\boundary X;\R)$ because we are primarily interested in primitive homology classes. (We remind the reader that a primitive class $\alpha$ is one which is integral and not equal to $c\beta$ for any integral class $\beta$ and integer $c>1$.) If $\widehat{cS}$ is not norm-minimizing, then neither is $\widehat{S}$. Thus, $E$ lies in an $(n-2)$-dimensional subcomplex of $(H_2(X,\boundary X;\R)\setminus0)/($positive scalar multiplication) $\cong S^{n-1}$. For example, if $n=3$, then $E$ lies in a graph embedded in $S^2$. We could have alternately phrased the conlusion of Theorem~\ref{ncompthm} as, ``for any $q_1,\ldots, q_{n-2}\subset\Q\cup\{\pm\infty\}$, there exists a finite set $E\subset(\Q\cup\{\pm\infty\})^2$ so that if $\boundary S$ has slope $q_i$ on $\boundary\overline{\nu(L_i)}$ and $(q_{n-1},q_n)\not\in E$, then either $\widetilde{Y}\setminus\nu(L_1\sqcup L_2)$ has degenerate Thurston norm or $\widehat{S}$ is norm-minimizing.'' We believe the given conclusion to be more illuminating, but we are happy for the reader to instead think about surgery slopes.

The proof of Theorems~\ref{mingenuscor},~\ref{mingenuscor2},~\ref{2compthm}, and~\ref{ncompthm} are motivated by Gabai's constructions of taut foliations and sutured manifolds. In particular, we use many important theorems of Gabai which were essential in the proof of Property R. We give specific references and background in Section~\ref{sec:sutured}.

Note that increasing the number of link components allows us to make the statement of Corollary~\ref{proprcor} about homology $3$-spheres, even though the Property R theorem about knots is not generally true in homology $3$-spheres. For instant, obtain $Y$ by surgering $S^1\times S^2$ along a curve homologous (but not isotopic) to $S^1\times \pt$, and let the knot $L$ be a core of the surgery solid torus. See Figure~\ref{fig:mazur} for an explicit example.

\begin{figure}
\labellist
\small\hair 2pt
\pinlabel $0$ at -8 75
\pinlabel $0$ at 125 155
\pinlabel $0$ at 165 75
\pinlabel $0$ at 298 155
\pinlabel $\textcolor{red}{L}$ at 5 5
\endlabellist
\begin{centering}
\includegraphics[width=75mm]{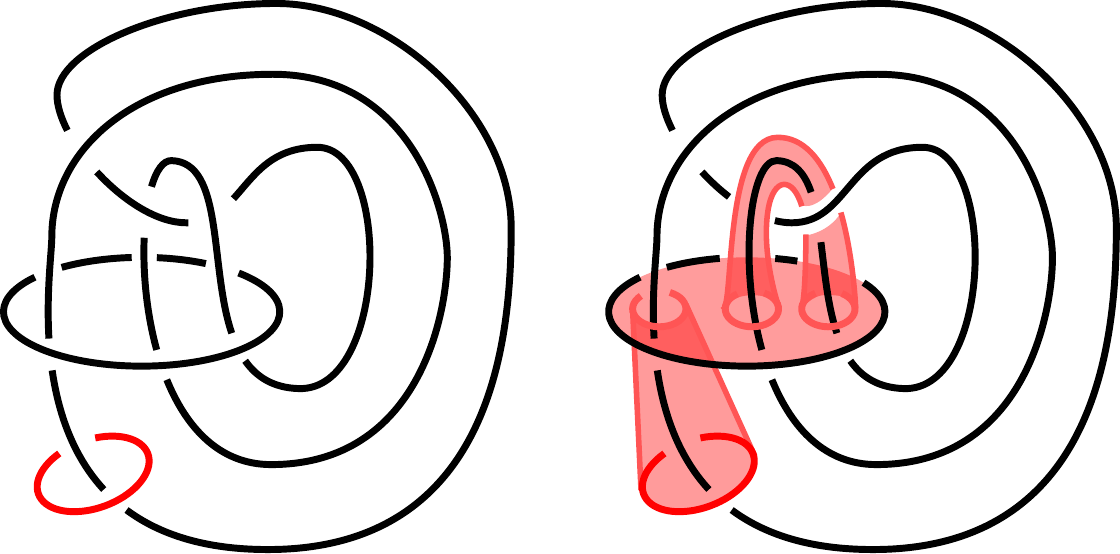}
\caption{{\bf{Left:}} A surgery diagram for the Brieskorn sphere $Y:=\Sigma(2,5,7)$. Zero-framed surgery on the knot $L\subset Y$ yields $S^1\times S^2$, but $L$ does not bound a disk in $Y$. {\bf{Right:}} A norm-minimizing surface in $Y\setminus\nu(L)$.}\label{fig:mazur}
\end{centering}
\end{figure}

Finally, we wish to remark that assumptions on linking number are generally just to avoid trivialities. Very roughly, the main arguments in the proofs of Theorems~\ref{mingenuscor},~\ref{mingenuscor2},~\ref{2compthm}, and~\ref{ncompthm} will come from understanding the intersections of two nonhomologous minimal surfaces. We take the ambient manifold $Y$ to be a rational homology $3$-sphere and $\lk(L_i,L_j)\neq 0$ so that the boundary data of a surface $S$ in $Y\setminus\nu(L)$ determines the homology class of $S$. That is, so that $\boundary:H_2(Y\setminus\nu(L),\boundary(Y\setminus\nu(L));\Z)\to H_1(\boundary(Y\setminus\nu(L));\Z)$ is injective. However, we could have stated the conclusion of Theorem $\ref{ncompthm}$ to be ``there exists an $E\subset$(an $(n-2)$-dimensional complex in $(\Q\cup\{\infty\})^n)$ so that if $\boundary S$ meets every $P_i$ and $\boundary S=(\boundary S\cap P_1,\ldots,\boundary S\cap P_n)$ is not in $E$, then $\widehat{S}$ is norm-minimizing or the Thurston norm becomes degenerate after surgery on $(n-2)$ components of $L$.'' This holds perfectly well when some (or all) of the pairwise linking numbers $\lk(L_i,L_j)$ are zero (If $L$ is split, then divide $L$ into two split sublinks and induct), as then the image of $\boundary$ is at most $(n-1)$-dimensional, and contains an at most $(n-2)$-dimensional class of primitive elements.  As a simple example, consider a $2$-component link $L_1\sqcup L_2$ in $S^3$ with linking number zero, e.g. the Whitehead link.  Any norm-minimizing surface $S$ in $S^3\setminus\nu(L)$ meeting $\boundary(\overline{\nu(L_i)})$ does so in curves of slope $0$, so we may let $E\subset (\R\cup\infty)^2$ be the finite set $\{(0,0)\}$ and be done.

We leave the reader with the following natural question.

\begin{question}
Let $L$ be as in Theorem~\ref{2compthm} or Theorem~\ref{ncompthm}. Take the rational homology $3$-sphere $Y$ to actually be the $3$-sphere. Using $Y=S^3$, can we further restrict the set of norm-minimizing surfaces $S$ with the property that $\widehat{S}$ is not norm-minimizing?
\end{question}

\subsection*{Organization}
We break the paper into the following sections.

\begin{itemize}[label={},leftmargin=*]
\item {\bf{Section~\ref{sec:thurston}:}} We introduce the Thurston norm and some of its basic properties.
\item {\bf{Section~\ref{sec:sutured}:}} We discuss sutured manifolds and foliations. We restate the main theorems and prove Theorem~\ref{2compthm} from Theorem~\ref{mingenuscor}.
\item {\bf{Section~\ref{sec:graphs}:}} We introduce fat-vertex graphs, which will be used to describe certain intersections of surfaces. 
\item {\bf{Section~\ref{sec:proof}:}} We prove Theorem~\ref{mingenuscor}, using tools from Sections~\ref{sec:thurston}--\ref{sec:graphs}.
\item {\bf{Section~\ref{sec:ncomp}:}} We prove Theorem~\ref{mingenuscor2}. We then prove Theorem~\ref{ncompthm} by inducting on Theorem~\ref{2compthm}.
\end{itemize}

\subsection*{Acknowledgements}

The author would like to thank her graduate advisor, David Gabai, for suggesting this question in the spring of 2017 and for many helpful conversations over a very long period of time. The author is a fellow in the National Science Foundation Graduate Research Fellowship program, under Grant No. DGE-1656466.

\section{The Thurston norm}\label{sec:thurston}

In this section, we review the definition of the Thurston norm and some of its basic properties.

The Thurston norm can be defined by its valuation on integral homology classes, and then extended naturally to rational and then all real homology classes. On integral homology classes, the Thurston norm is geometrically motivated.

\begin{definition}[Thurston~\cite{thurston}]\label{def:thurston}
Let $M$ be a compact $3$-manifold. Given an oriented surface $S$ smoothly embedded in $M$, we define \[\chi^+(S)=\begin{cases}\max\{-\chi(S),0\}&S\text{ is connected,}\\\sum_{i=1}^n\chi^+(S_i)&S=\sqcup_{i=1}^n S_i.\end{cases}\]

Now let $\alpha$ be an integral element of $H_2(M;\R)$ or $H_2(M,\boundary M;\R)$, where $M$ is a compact $3$-manifold. Define:
\[x_M(\alpha)=\min\{\chi^+(S)\mid S\text{ is a surface embedded in $M$ with $[S]=\alpha$}\}.\]

In words, if $S$ is the surface representing $\alpha$ with least negative Euler characteristic, then $x_m(\alpha)=|\chi(S)|$. However, we do not allow nullhomologous disk or $2$-sphere components to artificially increase Euler characteristic, hence the definition of $\chi^+$. Also, if $S$ happens to have positive Euler characteristic, then we say $x_M(\alpha)=0$ rather than a negative number, as we wish for $x_M$ to extend to a pseudonorm.

Now let $\beta$ be a rational element of $H_2(M,\R)$ or $H_2(M,\boundary M;\R)$. Say $q\beta$ is an integral homology class, where $q\in\N$. Then \[x_M(\beta)=\frac{1}{q}x_M(q\beta).\]

Finally, let $\gamma$ be any element of $H_2(M,\R)$ or $H_2(M,\boundary M;\R).$ Suppose $\gamma$ is approximated by rational homology classes $\beta_1,\beta_2,\ldots$, so $\gamma=\lim_{n\to\infty}\beta_n$. Then \[x_M(\gamma)=\lim_{n\to\infty}x_M(\beta_n).\]
\end{definition}
In Definition~\ref{def:thurston}, we implicitly use the fact that any integral second homology class $\alpha$ in a $3$-manifold $M$ can be represented by an oriented surface $S$ properly embedded in $M$.

\begin{definition}
Let $M$ be a compact $3$-manifold. Let $S$ be an oriented surface representing $\alpha\in H_2(M,\R)$ or $H_2(M,\boundary M;\R)$, with $\alpha\neq 0$. We say that $S$ is {\emph{norm-minimizing}} if the following conditions hold:

\begin{itemize}
\item $S$ has no nullhomologous subset of components,
\item $\chi^+(S)=x_M(\alpha)$,
\item If $\chi(S)=0$, then any $S'$ with $[S']=[S]$ and $\chi(S')>0$ has a subset of components that is nullhomologous.
\end{itemize}

In words, a surface $S$ is norm-minimizing if it maximizes Euler characteristic among homologous surfaces, discarding any nullhomologous subsets of components (to prevent adding e.g. nullhomologous $2$-sphere components to artificially increase Euler characteristic).
\end{definition}

\begin{definition}
Let $M$ be a compact $3$-manifold whose boundary is a collection of tori. Let $S$ be an oriented surface representing an integral $\alpha\in H_2(M,\boundary M;\R)$. We say that $S$ is {\emph{properly norm-minimizing}} if $S$ is norm-minimizing and on each boundary component $P$ of $M$, all components of $\boundary S\cap P$ have the same orientation (parallel on $P$).
\end{definition}

When $M$ has boundary a collection of tori, any norm-minimizing surface $S$ can be transformed into a properly norm-minimizing surface by adding tubes to $S$ which are parallel to torus boundary components of $M$. See Figure~\ref{fig:proper}. Each tube is attached to one pair of opposite-orientation boundary components of $M$, and then a neighborhood of the tube is pushed off $\boundary M$ into the interior of $M$. The resulting surface $S'$ is norm-minimizing (as $\chi(S')=\chi(S)$) but larger genus than $S$, as $S'$ is boundary-compressible at each tube attached to $S$.

\begin{figure}
\labellist
\small\hair 2pt
\pinlabel $P$ at 15 265
\pinlabel $\textcolor{red}{S}$ at 8 50
\pinlabel $\textcolor{red}{S'}$ at 393 50
\endlabellist
\begin{centering}
\includegraphics[width=75mm]{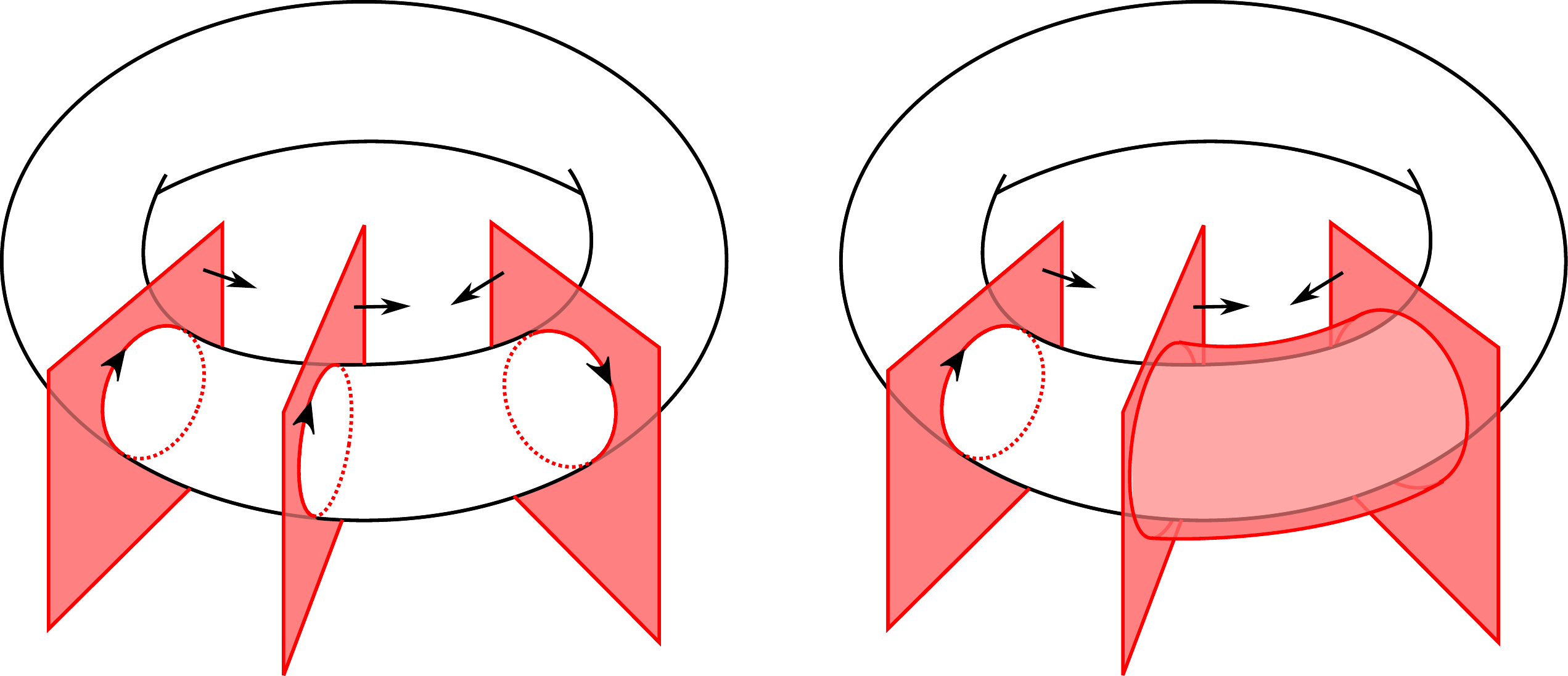}
\caption{{\bf{Left:}} A norm-minimizing surface $S$ near a torus boundary-component $P$ of $3$-manifold $X$. The components of $\boundary S$ on $P$ need not have parallel orientations. {\bf{Right:}} We increase the genus of $S$ (while preserving Euler characteristic) to find a properly norm-minimizing surface $S'$ homologous to $S$.}\label{fig:proper}
\end{centering}
\end{figure}

\begin{remark}

If $x_M(\alpha)>0$ for all $\alpha\neq 0$, then $x_M$ is a norm rather than a pseudonorm.
When $x_M$ is a norm, we say that $x_M$ is nondegenerate.
\end{remark}

In this paper, we are more interested in $H_2(M,\boundary M;\R)$ than we are in $H_2(M;\R)$. We will implicitly identify $H_2(M,\boundary M;\R)$ with $\R^n$ for appropriate $n$, where the integral lattice points correspond to integral homology classes. Thurston observed that the Thurston norm is convex and linear on rays through the origin, and that the Thurston norm is symmetric under reflection through the origin. Moreover, Thurston understood the geometry of the norm unit-ball $B_{x_M}$.
\begin{theorem}[{\cite[Theorem 2]{thurston}}]
Let $M$ be a compact $3$-manifold. Assume $x_M$ is nondegenerate. Then the unit ball $B_{x_M}\subset H_2(M,\boundary M;\R)=\R^n$ is a nondegenerate polyhedron defined by linear inequalities with integer coefficients.
\end{theorem}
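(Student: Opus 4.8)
The plan is to separate the genuinely topological input from the purely convex-geometric content, and then to recognize the statement as the standard fact that a norm which is integral on a lattice is polytopal. By the observations of Thurston recalled just above, $x_M$ is convex, positively homogeneous of degree one, and symmetric; together with the nondegeneracy hypothesis this says exactly that $x_M$ is a genuine norm on $H_2(M,\partial M;\R)\cong\R^n$. Moreover, directly from Definition~\ref{def:thurston} the value $x_M(\alpha)$ is a minimum of the nonnegative integers $\chi^+(S)$, so $x_M(\Z^n)\subseteq\Z_{\ge 0}$, where $\Z^n$ denotes the integral lattice $H_2(M,\partial M;\Z)$. Thus it suffices to prove the following: \emph{a norm $x$ on $\R^n$ with $x(\Z^n)\subseteq\Z$ has unit ball equal to a bounded, full-dimensional polyhedron $\{v:\langle a_i,v\rangle\le N_i,\ 1\le i\le m\}$ with $a_i\in\Z^n$ and $N_i\in\Z_{>0}$.}

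I would prove this via duality. Because $x$ is a finite, nondegenerate norm, its unit ball $B:=B_{x_M}$ is compact, symmetric, full-dimensional, and convex, and $x$ is the support function of the polar body $B^{\circ}$, i.e. $x(v)=\max_{\phi\in B^{\circ}}\langle\phi,v\rangle$. Since polarity interchanges vertices and facets, it is enough to show that $B^{\circ}$ is a polytope with rational vertices; clearing denominators in those vertices then presents $B$ by integer inequalities. The rationality of the vertices is the easy half: every exposed point $\phi$ of $B^{\circ}$ is a vertex whose normal cone $\sigma=\{v:\langle\phi,v\rangle=x(v)\}$ is full-dimensional, and on $\sigma$ the norm $x$ agrees with the single linear functional $\phi$. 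Choosing $n$ linearly independent lattice vectors $v_1,\dots,v_n\in\sigma$, the equalities $\phi(v_i)=x(v_i)\in\Z$ together with the invertibility of $(v_1,\dots,v_n)$ force $\phi$ to be rational. Thus all exposed points of $B^{\circ}$ are rational, and by Straszewicz's theorem the extreme points are their closure; so once we know there are finitely many exposed points we conclude $B^{\circ}=\mathrm{conv}\{\phi_1,\dots,\phi_m\}$ with each $\phi_i$ rational.

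The main obstacle is exactly this finiteness: showing that $x$ has only finitely many maximal regions of linearity, equivalently that $B^{\circ}$ has finitely many exposed points. This is where integrality does all the work, and where I expect to spend the most care, since without it a norm can perfectly well have a smooth, strictly convex unit ball. The mechanism is that integrality on the lattice prevents $\partial B$ from being curved: in any fixed irrational direction there are lattice points $v$ with $v/\lvert v\rvert$ approaching that direction and $\lvert v\rvert\to\infty$, and the constraint $x(v)\in\Z$ pins the radial function of $B$ so tightly that strict convexity along $\partial B$ becomes impossible. Combined with the compactness of $B$, this confines the rational exposed points of $B^{\circ}$ to a bounded region of a single lattice $\tfrac1N\Z^n$, which contains only finitely many points. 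This is the standard statement that an integral norm is polytopal, and I would prove it either by the radial argument just sketched or by an induction on $n$ that restricts $x$ to rational hyperplanes, using that each such restriction is again an integral norm.

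Once finiteness is in hand, write $B^{\circ}=\mathrm{conv}\{\phi_1,\dots,\phi_m\}$ and $\phi_i=\tfrac1{N_i}a_i$ with $a_i\in\Z^n$ and $N_i\in\Z_{>0}$. Taking the polar gives $B=\{v:\langle\phi_i,v\rangle\le 1\}=\{v:\langle a_i,v\rangle\le N_i\}$, a nondegenerate polyhedron, bounded since $x$ is nondegenerate and full-dimensional since $x$ is finite, defined by linear inequalities with integer coefficients, which is the desired conclusion.
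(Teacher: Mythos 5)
Your overall frame---reduce to the purely convex-geometric statement that a norm taking integer values on $\Z^n$ has a polytopal unit ball cut out by integer inequalities, then work with the dual ball $B^{\circ}$---is the right one; note that the paper itself gives no proof of this statement (it is quoted from Thurston), so the comparison is with Thurston's argument. Your reduction (convexity, homogeneity, symmetry, integrality on the lattice, nondegeneracy) is fine. But the convex-geometric core has two genuine gaps. First, your ``easy half'' is circular as stated: it is false for a general norm that every exposed point $\phi$ of $B^{\circ}$ has full-dimensional normal cone $\sigma=\{v:\langle\phi,v\rangle=x(v)\}$. For the Euclidean norm every boundary point of $B^{\circ}$ is exposed and its normal cone is a single ray, so no $n$ independent lattice vectors can be found in it. Full-dimensionality of the normal cones of exposed points is essentially equivalent to the polyhedrality being proved, so your rationality argument only becomes valid after finiteness is already known. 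Second, and more seriously, the finiteness step---which you correctly identify as where integrality must do all the work---is asserted, not proved. ``Integrality pins the radial function so that strict convexity is impossible'' is not a mechanism, and even if carried out it would not suffice: ruling out curved arcs in $\boundary B$ does not bound the number of faces, since a unit ball whose boundary is a union of straight segments can still have infinitely many facets accumulating at a point. The claim that the exposed points of $B^{\circ}$ lie in a single lattice $\tfrac1N\Z^n$ with uniform $N$ is likewise unsupported, since the denominators your argument produces depend on the lattice vectors chosen in each normal cone, which vary with the vertex.

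The standard way (essentially Thurston's) to close exactly this gap is to prove \emph{integrality} of the dual points first, so that finiteness falls out of compactness---reversing your order. Fix a lattice vector $v_0$; for lattice $w$ the sequence $a_k=x(kv_0+w)-kx(v_0)$ is non-increasing (triangle inequality), bounded below by $-x(-w)$, and integer-valued, hence stabilizes. The limit $y(w)=\lim_k a_k$ is a sublinear function with $y\le x$, integer-valued on the lattice, and linear in the $v_0$-direction with slope $x(v_0)$. Iterating this construction over a basis of the lattice (linearity in the directions already treated is preserved by the limit) produces an honest linear functional $\ell\le x$ that is integral on the lattice and satisfies $\ell(v_0)=x(v_0)$. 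Hence $x(v)=\max\{\ell(v):\ell\in B^{\circ}\cap(\Z^n)^*\}$ holds on lattice points, and then everywhere by homogeneity and continuity; since $x$ is nondegenerate, $B^{\circ}$ is compact and contains only finitely many dual lattice points, so $B_{x_M}=\{v:\ell_i(v)\le1,\ i=1,\dots,m\}$ with each $\ell_i$ integral, which is the theorem. If you supply an argument of this kind (or any complete proof that $B^{\circ}$ has finitely many exposed points), the rest of your outline---Straszewicz, Minkowski, and polarity---goes through; as written, the heart of the theorem is missing.
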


\begin{definition}
Assume $x_M$ is nondegenerate. If $[S]$ is a primitive homology class and is {\emph{not}} contained in the cone on the interior of a face of $B_{x_M}$, then we will call $[S]$ a {\emph{corner}} of the Thurston norm. If $[S_1],\ldots,[S_n]$ are corners of the Thurston norm and up to positive scalar multiplication all lie in one closed face of $\boundary B_{x_M}$, then we say that $[S_1],\ldots,[S_k]$ are an {\emph{adjacent set}} of corners of the Thurston norm. When $k=2$, we may just say $[S_1]$ and $[S_2]$ are adjacent.

Note that when $x_M$ is defined on a space of dimension more than $2$, a homology class being a corner is {\emph{not}} the same as a homology class being vertex. For example, when $b_2(M)=3$ then the unit ball of $x_M:H_2(M;\R)\to\R$ is a $3$-dimensional polyhedron. We would say that a primitive homology class which is a multiple of a vertex or edge on that polyhedron is a corner of $x_M$.
\end{definition}

 Finding norm-minimizing surfaces for corners of the Thurston norm allows one to find a norm-minimizing surface for any integral homology class through cut-and-paste surgery.

\begin{definition}
Let $R$ and $T$ be oriented surfaces in a compact $3$-manifold $M$ which intersect transversely. Let $R+T$ denote the surface resulting from {\emph{cut-and-paste}} surgery on $R$ and $T$. That is, where $R$ and $T$ intersect in a closed circle, remove a neighborhood of that circle from both $R$ and $T$ and glue in two disjoint annuli consistent with the orientations of $R$ and $T$. Where $R$ and $T$ intersect in an arc, remove a neighborhood of that arc from both $R$ and $T$ and glue in two disjoint disks consistent with the orientations of $R$ and $T$. Call the resulting surface $R+T$.

Equivalently, we might define $V$ to be $\overline{\nu(R)}\cup\overline{\nu(T)}$ for small tubular neighborhoods of $R$ and $T$. Smooth $V$ at corners to be smoothly embedded in $M$. Then let $R+T$ denote the positive boundary component of $V$ (where the orientation on $V$ is induced by the orientations on $R$ and $T$).

Note $\chi(R+T)=\chi(R)+\chi(T)$.
\end{definition}

\begin{proposition}\label{cutpasteprop}
Let $\alpha_1$ and $\alpha_2$ be adjacent corners of a nondegenerate Thurston norm $x_M$, where $M$ is a $3$-manifold with boundary a disjoint union of tori $\sqcup_{i=1}^n P_i$. Then there exist norm-minimizing surfaces $S_1$ and $S_2$ with $[S_i]=\alpha_i$ so that for any positive integers $a$ and $b$, $aS_1+bS_2$ is properly norm-minimizing.
\end{proposition}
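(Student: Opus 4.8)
{\it Proof proposal.}
The plan is to realize $\alpha_1$ and $\alpha_2$ by norm-minimizing surfaces $S_1$ and $S_2$ that are put into a ``nicest possible'' relative position, and then argue that cut-and-paste never introduces the two defects that would prevent $aS_1+bS_2$ from being properly norm-minimizing: a drop in Euler characteristic (i.e.\ a failure to be norm-minimizing), or anti-parallel boundary curves on some $P_i$ (i.e.\ a failure to be proper). Since $\chi(aS_1+bS_2)=a\chi(S_1)+b\chi(S_2)$ by the additivity noted just above, and since $\chi^+=-\chi$ for norm-minimizing surfaces with no $S^2$ or disk components, the key numerical point will be that cut-and-paste does not create nullhomologous sphere/disk components and that $x_M(a\alpha_1+b\alpha_2)=ax_M(\alpha_1)+bx_M(\alpha_2)$. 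This last equality is exactly where adjacency enters: because $\alpha_1,\alpha_2$ lie in one closed face of $\boundary B_{x_M}$, the norm is \emph{linear} on their positive span, so $x_M(a\alpha_1+b\alpha_2)=a\,x_M(\alpha_1)+b\,x_M(\alpha_2)$, and hence $aS_1+bS_2$ automatically has the right Euler characteristic provided no wasteful closed components appear.

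First I would choose the $S_i$ to minimize the number of intersection curves and arcs of $S_1\cap S_2$ within their homology classes, or more robustly use an innermost-disk/outermost-arc argument to arrange that $S_1\cap S_2$ contains no closed curve bounding a disk in either surface and no arc cutting off a disk; such trivial components can be removed by an isotopy (or an exchange that does not decrease $\chi$), so after this cleanup every component of the cut-and-paste surface inherits its negative Euler characteristic honestly from $S_1$ and $S_2$. Concretely, if some component of $aS_1+bS_2$ were a sphere or disk, it would have to be nullhomologous, and on a nondegenerate norm $x_M$ this would force a strict drop $x_M(a\alpha_1+b\alpha_2)<ax_M(\alpha_1)+bx_M(\alpha_2)$, contradicting the linearity just established. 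Thus after the cleanup the surface $aS_1+bS_2$ is norm-minimizing for every $a,b>0$.

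For the ``properly'' clause I would control the boundary behavior on each torus $P_i$. The boundary slopes $\boundary S_1\cap P_i$ and $\boundary S_2\cap P_i$ are fixed (up to sign) by the classes $\alpha_1,\alpha_2$; the point is to orient and position $S_1,S_2$ so that on every $P_i$ the curves $\boundary S_1\cap P_i$ and $\boundary S_2\cap P_i$ are coherently oriented, so that $\boundary(aS_1+bS_2)\cap P_i$ consists of $a\abs{\boundary S_1\cap P_i}+b\abs{\boundary S_2\cap P_i}$ parallel curves with a common orientation. Here the cut-and-paste on $P_i$ is governed by the algebraic intersection of the two slopes; I would first replace each $S_i$ by a properly norm-minimizing representative (tubing anti-parallel $\boundary S_i$ curves along $P_i$ as in Figure~\ref{fig:proper}, which preserves $\chi$), so that within each $S_i$ the boundary is already parallel on each $P_i$, and then orient $\alpha_1,\alpha_2$ compatibly along the shared face so the two families of boundary curves do not run antiparallel on any $P_i$.

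The main obstacle I expect is precisely this boundary-orientation bookkeeping: ensuring that a \emph{single} pair $S_1,S_2$ works simultaneously for \emph{all} positive $a,b$ and on \emph{all} boundary tori $P_i$ at once, rather than having to re-choose the surfaces as $(a,b)$ varies. The resolution should be that once $S_1$ and $S_2$ are each properly norm-minimizing and coherently oriented on the common face, their boundaries on each $P_i$ are parallel families with the \emph{same} orientation, and cut-and-paste of parallel coherently-oriented boundary curves simply produces more parallel coherently-oriented curves with no new anti-parallel pairs and no new trivial arcs; combined with the linearity of the norm on the face, this gives that $aS_1+bS_2$ is properly norm-minimizing uniformly in $a,b$.
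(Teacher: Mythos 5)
Your outline shares the paper's numerical skeleton (linearity of $x_M$ on the face spanned by $\alpha_1,\alpha_2$, additivity of $\chi$ under cut-and-paste, and putting $\boundary S_1,\boundary S_2$ in minimal position on each $P_i$ so that all boundary resolutions are coherent), but the interior cleanup has a genuine gap. Arranging that $S_1\cap S_2$ has no circle bounding a disk in either surface and no arc cutting off a disk is \emph{not} enough to prevent closed components of $aS_1+bS_2$ of nonnegative Euler characteristic. The dangerous configuration is a pair of circles of $S_1\cap S_2$, essential in both surfaces, cobounding an annulus region of $S_1\setminus S_2$ and an annulus region of $S_2\setminus S_1$: the cut-and-paste can glue these two annuli into a closed \emph{torus} component. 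Your innermost-disk/outermost-arc argument never touches these circles. By nondegeneracy such a torus is nullhomologous, so $aS_1+bS_2$ would contain a nullhomologous subset of components and fail the definition of norm-minimizing, even though no Euler-characteristic count detects this ($\chi^+$ of a torus is $0$). This is exactly why the paper's cleanup is different and stronger: whenever $S_1\setminus S_2$ contains a component $C$ with $\chi(C)\ge 0$ not meeting $\boundary S_1$ --- an annulus, not only a disk --- one surgers $S_2$ along $C$ (Figure~\ref{fig:simplecutpaste}), obtaining a homologous norm-minimizing surface and strictly fewer intersection circles; only when this process terminates, so that every closed region of $S_1\setminus S_2$ has negative Euler characteristic, can one conclude that no sphere or torus component appears.

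Separately, your fallback argument for spheres and disks is circular. The existence of a (necessarily nullhomologous) sphere component in $aS_1+bS_2$ does not ``force a strict drop'' $x_M(a\alpha_1+b\alpha_2)<ax_M(\alpha_1)+bx_M(\alpha_2)$: the norm is an invariant of the homology class, and exhibiting a wasteful representative says nothing about the value of the norm. A sphere component would show only that $\chi^+(aS_1+bS_2)>x_M(a\alpha_1+b\alpha_2)$, i.e.\ that this particular surface is not norm-minimizing --- which is precisely what you are trying to exclude, so it cannot serve as the contradiction; such components must be excluded geometrically, via the surgery cleanup above. A smaller correction: after replacing $S_1,S_2$ by properly norm-minimizing representatives, their boundary families on a given $P_i$ are parallel within each surface but in general have \emph{different} slopes from each other, so they cannot be made ``parallel with a common orientation''; what the argument needs, and what the paper requests, is that $\boundary S_1$ and $\boundary S_2$ intersect minimally on each $P_i$, which forces every crossing to be resolved coherently and makes $\boundary(aS_1+bS_2)\cap P_i$ a parallel, coherently oriented family.
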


\begin{proof}
Since $x_M$ is nondegenerate, $\chi^+(R)=-\chi(R)$ for any norm-minimizing surface $R$ in $M$.

Let $S_1$ and $S_2$ be properly norm-minimizing surfaces with $[S_i]=\alpha_i$. Isotope the $S_i$ near their boundaries so that $\boundary S_1$ and $\boundary S_2$ intersect minimally. This ensures that for each $j$, every boundary component of $aS_1+bS_2$ on $P_j$ has the same orientation. We have $x_M(a[S_1]+b[S_2])=ax_M([S_1])+bx_M([S_2])=a\chi^+(S_1)+b\chi^+(S_2)=-a\chi(S_1)-b\chi(S_2)=-\chi(aS_1+bS_2)$. Then we are done if $aS_1+bS_2$ has no disk, $2$-sphere, torus or annulus components.

The condition on $\boundary(aS_1+bS_2)$ along with nondegeneracy of $x_M$ ensures that $aS_1+bS_2$ has do disk or annulus components. Suppose $S_1\setminus S_2$ includes a component $C$ which does not meet $\boundary S_1$ and with $\chi(C)\ge 0$. Then surger $S_2$ along $C$ to obtain $S'_2$ (i.e. $S'_2:=[S_2\setminus((\boundary C)\times I)]\cup(C\times S^0)$). See Figure~\ref{fig:simplecutpaste}. The surface $S'_2$ is homologous to $S_2$. Since $\chi(C)\ge 0$, $S'_2$ is norm-minimizing. Set $S_2:=S'_2$ and repeat until $S_1\setminus S_2$ includes no such component $C$. Now any closed components of $aS_1+bS_2$ much include a region homeomorphic to some component of $S_1\setminus S_2$, which must have negative Euler characteristic. Therefore, $aS_1+bS_2$ has no closed sphere or torus components, so $aS_1+bS_2$ is proeprly norm-minmizing.
\end{proof}

\begin{figure}
\begin{centering}
\labellist
\small\hair 2pt
\pinlabel $C$ at 215 120
\pinlabel $\textcolor{red}{S_1}$ at -20 50
\pinlabel $\textcolor{blue}{S_2}$ at 225 200
\pinlabel $\textcolor{red}{S_1}$ at 515 50
\pinlabel $\textcolor{blue}{S'_2}$ at 765 200
\endlabellist
\includegraphics[width=95mm]{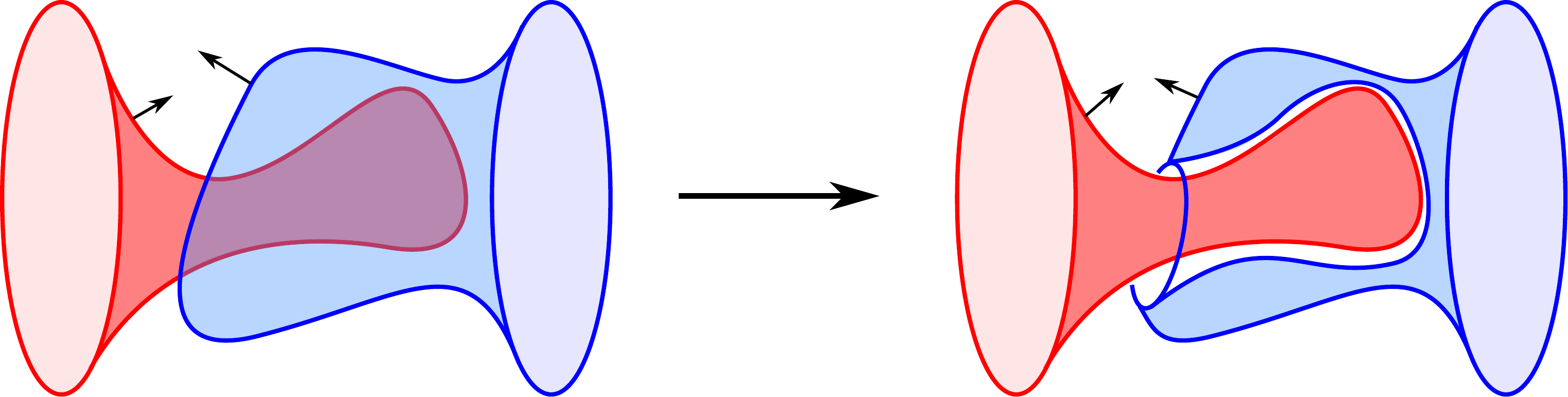}
\caption{{\bf{Left:}} $S_1\setminus S_2$ includes a disk $C$.  
{\bf{Right:}} We surger $S_2$ along $C$ to find another properly norm-minimizing surface $S'_2$. We replace $S_2:=S'_2$ and repeat until every component of $S_1\setminus S_2$ not meeting $\boundary S_1$ has negative Euler characteristic. Then $aS_1+bS_2$ cannot have any sphere or torus components.}\label{fig:simplecutpaste}\end{centering}
\end{figure}

There is a connection between the Thurston norm and foliations and sutured manifolds, which we dicuss in Section~\ref{sec:sutured}. Using these connections, Gabai~\cite{ft3m3} proved the following theorem.
\begin{theorem}[{\cite[Corollary 8.3]{ft3m3}}]\label{propr}
Let $S$ be a minimum-genus Seifert surface for a knot $K$ in $S^3$. Let $\widehat{S}$ be the closed surface in $S^3_0(K)$ obtained from $S$ by attaching a disk in the Dehn-surgery solid torus to $\boundary S$. Then $\widehat{S}$ is norm-minimizing.
\end{theorem}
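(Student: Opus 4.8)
The plan is to reproduce Gabai's strategy: build a taut foliation of the surgered manifold $S^3_0(K)$ having $\widehat{S}$ as a compact leaf, and then deduce norm-minimization from Thurston's foliation inequality. First I would realize the knot exterior $M:=S^3\setminus\nu(K)$ as a sutured manifold, taking the suture $\gamma$ to be an annular neighborhood of the longitude $\boundary S$ on $\boundary M=T^2$. The hypothesis that $S$ is of minimal genus is exactly the statement that the sutured manifold obtained by decomposing $(M,\gamma)$ along $S$ is \emph{taut}: $M$ is irreducible and $S$ is norm-minimizing in $H_2(M,\boundary M;\Z)$. I would then invoke Gabai's sutured manifold theory — a taut sutured manifold admits a sutured manifold hierarchy, which can be used to construct a taut (hence Reebless) foliation $\F$ of $M$ having $S$ as a leaf and restricting on $\boundary M$ to a foliation by curves of the longitudinal slope $\boundary S$.

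The key step, and the main obstacle, is to arrange $\F$ so that it survives the $0$-surgery. Because the $0$-framing is precisely the longitude, the meridian of the Dehn-filling solid torus $V\cong D^2\times S^1$ coincides with $\boundary S$; hence $V$ may be foliated by meridian disks $D^2\times\{\pt\}$ whose boundaries match the longitudinal leaves of $\F|_{\boundary M}$. Gluing this product foliation of $V$ to $\F$ produces a foliation $\widehat{\F}$ of $S^3_0(K)$ in which a single disk $D^2\times\{\pt\}$ caps $\boundary S$ and completes $S$ to the closed leaf $\widehat{S}$. The delicate point is not the existence of $\F$ but the control of its boundary behaviour: one must guarantee that the hierarchy-constructed foliation meets $\boundary M$ in the longitudinal slope and that the assembled foliation $\widehat{\F}$ is genuinely taut (equivalently, Reebless) after filling. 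This boundary control is the crux of Gabai's Property R argument and is where the bulk of the technical work lies.

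Finally I would invoke Thurston's inequality for taut foliations of the closed manifold $S^3_0(K)$: if $\widehat{\F}$ is taut with Euler class $e(\widehat{\F})\in H^2(S^3_0(K))$, then $|\langle e(\widehat{\F}),\alpha\rangle|\le x_{S^3_0(K)}(\alpha)$ for every $\alpha\in H_2(S^3_0(K))$. Since $\widehat{S}$ is a compact leaf, $e(\widehat{\F})$ pairs with $[\widehat{S}]$ to give $\chi(\widehat{S})$ up to sign, so for $g(S)\ge 2$ we obtain $x_{S^3_0(K)}([\widehat{S}])\ge -\chi(\widehat{S})=\chi^+(\widehat{S})$; the reverse inequality is automatic, so $\widehat{S}$ realizes the norm and is norm-minimizing. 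The low-genus cases are immediate: if $g(S)\le 1$ then $\chi^+(\widehat{S})=0$, the norm of the generator of $H_2(S^3_0(K);\Z)\cong\Z$ vanishes, and $\widehat{S}$ is trivially norm-minimizing. This completes the argument modulo Gabai's foliation-construction machinery, which we cite from~\cite{ft3m3}.
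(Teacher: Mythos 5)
You should first be aware that the paper contains no proof of this statement: it is quoted directly from Gabai (\cite[Corollary 8.3]{ft3m3}), so the only meaningful comparison is with Gabai's argument and with the machinery the paper assembles around it. On that score your outline is faithful: producing a taut foliation of the knot exterior with $S$ as a compact leaf (Theorem~\ref{havefoliation1} in the paper), controlling its boundary trace so that it consists of longitudes, capping with the meridian-disk foliation of the filling solid torus, and concluding from Thurston's theorem that compact leaves of taut foliations are norm-minimizing (Theorem~\ref{studyfoliation}) is exactly Gabai's route, and the boundary control you flag as the crux is indeed where the work in \cite{ft3m3} lies. For $g(S)\ge 2$ your Euler-class argument correctly yields $x([\widehat{S}])=\chi^+(\widehat{S})$. (Two small inaccuracies: taut and Reebless are not equivalent conditions on a foliation, and a single annular suture about the longitude does not define a sutured manifold structure on the knot exterior, since the orientations of $R(\gamma)$ cannot then be made coherent with the suture; one takes $\gamma=\boundary M$ as a toral suture, or works directly with the complementary sutured manifold of $S$, whose tautness is equivalent to $S$ being norm-minimizing.)

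The genuine gap is your dismissal of $g(S)\le 1$. Under the paper's definition, a surface with $\chi=0$ is norm-minimizing only if, in addition to realizing the (here vanishing) norm, no homologous surface of positive Euler characteristic without nullhomologous components exists; for $g(S)=1$ this says that no $2$-sphere represents the generator of $H_2(S^3_0(K);\Z)$. That is not a triviality: it is equivalent to $S^3_0(K)$ having no $S^1\times S^2$ connected summand, which is essentially the Property R statement for genus-one knots such as the trefoil, and it is exactly the clause the paper invokes when it deduces the Property R Conjecture from this theorem (``no $2$-sphere can represent the generating class''). Vanishing of the norm gives you nothing here, so as written your proof establishes a statement too weak to support the paper's application. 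The repair lives inside your own construction: the taut foliation $\widehat{\F}$ of $S^3_0(K)$ has the torus $\widehat{S}$ as a leaf, so by Reeb stability it has no sphere leaves, and Novikov's theorem \cite{novikov} then gives $\pi_2(S^3_0(K))=0$; hence every embedded $2$-sphere is nullhomotopic, in particular nullhomologous, and the remaining clause of the definition holds.
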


Note that a minimum-genus Seifert surface for a knot $K$ is a properly norm-minimizing surface representing the generator of $H_2(S^3\setminus\nu(K),\boundary(S^3\setminus\nu(K));\R)$. The above theorem has the following important corollary, usually referred to as the Property R Conjecture.
\begin{corollary}[\cite{ft3m3}, Property R Conjecture]
If $K\subset S^3$ is a nontrivial knot, then $S^3_0(K)\not\cong S^1\times S^2$.
\end{corollary}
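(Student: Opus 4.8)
The plan is to argue by contradiction, leveraging Theorem~\ref{propr} together with the fact that the Thurston norm of $S^1\times S^2$ is degenerate. Suppose $K$ is nontrivial yet $S^3_0(K)\cong S^1\times S^2$. Let $S$ be a minimum-genus Seifert surface for $K$; since $K$ is nontrivial, a genus-zero Seifert surface would be a disk and force $K$ to be the unknot, so $g(S)=g(K)\ge 1$. Capping off $\boundary S$ with a disk in the surgery solid torus yields a connected closed oriented surface $\widehat{S}$ of genus $g\ge 1$, and by Theorem~\ref{propr} this $\widehat{S}$ is norm-minimizing in $S^3_0(K)$.

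Next I would record the relevant homology and norm data on the putative $S^1\times S^2$. We have $H_2(S^3_0(K);\Z)\cong\Z$, with $[\widehat{S}]$ a generator. Under the hypothesized homeomorphism, a generator of $H_2(S^1\times S^2;\Z)$ is represented by the sphere $\pt\times S^2$, so $[\widehat{S}]=\pm[\pt\times S^2]$; in particular $[\widehat{S}]$ is represented by an embedded $2$-sphere $\Sigma$, for which $\chi^+(\Sigma)=0$.

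The contradiction then comes from comparing $\widehat{S}$ against $\Sigma$, and I would split into two cases according to $g$. If $g\ge 2$, then $\chi^+(\widehat{S})=2g-2>0$, while $\Sigma$ is a homologous surface with $\chi^+(\Sigma)=0$; this contradicts the equality $\chi^+(\widehat{S})=x_{S^3_0(K)}([\widehat{S}])$ required of a norm-minimizing surface. If $g=1$, then $\widehat{S}$ is a torus with $\chi(\widehat{S})=0$, so the third clause in the definition of norm-minimizing applies: every surface homologous to $\widehat{S}$ with positive Euler characteristic must contain a nullhomologous subset of components. But $\Sigma$ is connected, represents the nonzero class $[\widehat{S}]$, and has $\chi(\Sigma)=2>0$, so it has no nullhomologous subset of components --- a contradiction. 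Either way the assumption $S^3_0(K)\cong S^1\times S^2$ fails, proving the corollary.

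The genuinely delicate point is the genus-one case: there, minimizing $\chi^+$ alone does not distinguish a torus from a sphere (both have $\chi^+=0$), so the argument must invoke the finer third condition in the definition of norm-minimizing rather than the Euler-characteristic bound alone. The $g\ge 2$ case, by contrast, is immediate once one knows the norm of $S^1\times S^2$ vanishes. All of the real content is hidden inside Theorem~\ref{propr}; granting it, the corollary reduces to a short bookkeeping argument about the (degenerate) Thurston norm of $S^1\times S^2$.
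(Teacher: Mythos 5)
Your proof is correct and follows essentially the same route as the paper: invoke Theorem~\ref{propr} to get a norm-minimizing capped-off surface $\widehat{S}$ of positive genus, then derive a contradiction from the sphere $\pt\times S^2$ representing the generating class of $H_2(S^1\times S^2;\Z)$. Your explicit case split (using the $\chi^+$ clause for $g\ge 2$ and the third clause of the norm-minimizing definition for $g=1$) simply spells out the details the paper compresses into the single sentence that no $2$-sphere can represent $[\widehat{S}]$.
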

\begin{proof}
Let $S$ be a minimum-genus Seifert surface for $K$. Since $K$ is nontrivial, $S$ has positive-genus. By Theorem~\ref{propr}, $\widehat{S}$ is norm-minimizing. Then no $2$-sphere can represent the generating class $[\widehat{S}]$ of $H_2(S^3_0(K);\Z)$, so $S^3_0(K)\not\cong S^1\times S^2$.
\end{proof}

When $L$ is an $n$-component link in a rational homology $3$-sphere $Y$, $H_2(Y\setminus\nu(L),\boundary (Y\setminus\nu(L));\R)$ is $n$-dimensional, so we may consider surfaces more general than Seifert surfaces. The Thurston norm of a link complement should always be understood to mean the Thurston norm on relative homology.

For this discussion, fix $n=2$. One analogue of Theorem~\ref{propr} holds by Gabai and J. Rasmussen (independently of each other) but has not appeared in writing.

\begin{theorem}[Gabai, J. Rasmussen]\label{fillonethm}
Let $L=L_1\sqcup L_2$ be a $2$-component link in a rational homology $3$-sphere $Y$ with nonzero linking number. Assume $X:=Y\setminus\nu(L)$ has nondegenerate Thurston norm. Let $S$ be a norm-minimizing surface in $X$. Let $N$ denote the result of Dehn-filling $X$ at $P_1=\boundary\overline{\nu(L_1)}$ according to the slope of $\boundary S\cap P_1$. Let $\widehat{S}$ denote the surface in $N$ obtained from $S$ by capping off each component of $\boundary S\cap P_1$ by a disk in the Dehn-filling solid torus.

Assume $[S]$ is not a corner of the Thurston norm on $X$. Then $\widehat{S}$ is norm-minimizing.
\end{theorem}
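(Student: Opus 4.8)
The plan is to certify $\widehat{S}$ as norm-minimizing by producing a taut foliation of $N$ having $\widehat{S}$ as a leaf, and then appealing to the foliation--Thurston norm correspondence that underlies Theorem~\ref{propr}: a compact leaf of a taut foliation is always norm-minimizing. The hypothesis that $[S]$ is not a corner is used at the outset. Since $x$ is nondegenerate and $H_2(X,\boundary X;\R)\cong\R^2$, the top-dimensional faces of the unit ball are edges, and $[S]$ lying in the cone on the interior of such a face means $[S]=a\alpha_1+b\alpha_2$ for an adjacent pair of corners $\alpha_1,\alpha_2$ and positive integers $a,b$. By Proposition~\ref{cutpasteprop} I may choose properly norm-minimizing representatives $S_1,S_2$ of $\alpha_1,\alpha_2$ so that $S=aS_1+bS_2$ is itself properly norm-minimizing. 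This realizes $S$ as a genuine cut-and-paste sum of both extremal surfaces, with $a,b>0$, which is the structure the rest of the argument exploits.

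I would then cut $X$ along $S$ to obtain the complementary sutured manifold $(X\setminus\nu(S),\gamma_S)$, whose sutures on each $P_i$ are parallel to $\boundary S\cap P_i$. Because $S$ is properly norm-minimizing and $x$ is nondegenerate, this sutured manifold is taut, so Gabai's foliation-existence theorem (the same engine that proves Theorem~\ref{propr}) yields a transversely oriented taut foliation $\F$ of $X$ having $S$ as a union of leaves. Since $\boundary S\cap P_i$ consists of leaves of $\F$ lying on the torus $P_i$, the restriction $\F|_{P_i}$ is a foliation of $P_i$ by parallel circles of slope $\boundary S\cap P_i$; in particular $\F$ meets $P_1$ in circles of exactly the filling slope $\gamma=\boundary S\cap P_1$.

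Now I would fill $P_1$ along $\gamma$ and extend $\F$ across the filling solid torus $V$ by the foliation whose leaves are meridian disks. The boundaries of these disks are precisely the $\gamma$-leaves of $\F|_{P_1}$, so the two foliations match along $P_1$ and glue to a foliation $\widehat{\F}$ of $N$; moreover each component of $\boundary S\cap P_1$ is capped by a meridian disk, so $\widehat{S}$ is a leaf of $\widehat{\F}$. Granting that $\widehat{\F}$ is taut, $\widehat{S}$ is a compact leaf of a taut foliation and is therefore norm-minimizing in $N$, which is the desired conclusion.

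The crux, and where I expect the real work to lie, is verifying that $\widehat{\F}$ is genuinely taut after the filling---equivalently, that capping off $\boundary S\cap P_1$ creates neither a compressing disk for $\widehat{S}$ nor a dead-end Reeb component in $V$. This is the step that can fail when $[S]$ is a corner, and the decomposition $S=aS_1+bS_2$ with both $a,b>0$ afforded by the non-corner hypothesis is what I would feed into a sutured-manifold hierarchy for $X\setminus\nu(S)$ in order to ensure the hierarchy can be chosen well-groomed and remains taut after the Dehn filling. The same conclusion can be pursued directly, in the spirit of the paper's cut-and-paste lemmas: a hypothetical competitor $\widehat{T}$ with $[\widehat{T}]=[\widehat{S}]$ and larger Euler characteristic, made transverse to the core of $V$ and cut back to a surface $T\subset X$ of slope $\gamma$ on $P_1$, would after cut-and-paste with $S$ and careful Euler-characteristic bookkeeping through the capping disks contradict either the norm-minimality of $S$ in $X$ or the nondegeneracy of $x$.
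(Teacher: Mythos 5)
Your opening reduction matches the paper's: since $[S]$ is not a corner, write a multiple of $[S]$ as $a[R]+b[T]$ for adjacent corners and positive $a,b$, and use Proposition~\ref{cutpasteprop} to realize $S$ as the properly norm-minimizing cut-and-paste sum $aR+bT$. But the foliation argument you build on this has a genuine gap at its central step: you claim that because the components of $\boundary S\cap P_1$ are leaves of $\F|_{P_1}$, the restriction $\F|_{P_1}$ is a foliation of $P_1$ by parallel circles of the filling slope. This is false in general. Theorem~\ref{havefoliation1} only guarantees a taut foliation with $S$ as a leaf and no Reeb components in $\boundary X$ (in the paper's weak sense); the \emph{other} leaves of $\F$ typically meet $P_1$ in noncompact curves spiraling onto the circles of $\boundary S\cap P_1$, so there is nothing for the meridian disks of the filling solid torus to match up with, and the glued foliation $\widehat{\F}$ does not exist. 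This is precisely the obstruction the paper flags in Section~\ref{sec:sutured} (``in general we cannot hope that $F_\alpha|_{\boundary X}$ is compact, so we must first attempt to alter $\F$''), and circumventing it is the entire content of the suspension-change machinery (Operation~\ref{suspensionchange}, Lemma~\ref{anyhomeo}) deployed later for Theorem~\ref{mingenuscor}. Your closing paragraph locates the ``real work'' in verifying tautness of $\widehat{\F}$ after filling, but the argument breaks one step earlier, before tautness is even in question.

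The paper's proof of this particular theorem avoids foliations altogether. Starting from $S=aR+bT$, the nonzero linking number forces $\boundary R$ and $\boundary T$ to have nonmultiple slopes on $P_1$, hence to intersect; choosing the boundaries to intersect minimally, every arc of $R\cap T$ runs from $P_1$ to $P_2$, and each such arc yields a product disk in the complementary sutured manifold $(M,\gamma)$ joining each suture on $P_1$ to a suture on $P_2$. The complementary sutured manifold to $\widehat{S}$ in $N$ is exactly $(M,\gamma)$ with $3$-dimensional $2$-handles attached along the sutures on $P_1$ (those sutures then forgotten), and Lemma~\ref{lemmadiskdecomp} identifies this, up to homeomorphism, with the result of product-disk decompositions along those disks. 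Since product-disk decomposition preserves tautness (Lemma~\ref{tautdecomp}) and $(M,\gamma)$ is taut because $S$ is norm-minimizing, the complementary sutured manifold to $\widehat{S}$ is taut, i.e.\ $\widehat{S}$ is norm-minimizing. Note that this is where the linking-number hypothesis actually does its work; your proposal never uses the arcs of $R\cap T$. Your fallback ``competitor surface'' argument in the last paragraph is also not viable as stated: cut-and-paste bookkeeping against a competitor in $N$ cannot by itself rule out the norm decreasing under filling, which is exactly the phenomenon exhibited in Figure~\ref{fig:example1} and the reason the corner hypothesis is needed at all.
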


Rasmussen's proof utilizes a connection between Thurston norm and knot Floer homology. We will present an argument of Gabai in Section~\ref{sec:sutured} via the theory of sutured manifolds, which is more similar to the content in this paper.

\begin{remark}In Theorem~\ref{fillonethm}, the condition that $[S]$ is not a corner of the Thurston norm is essential. For example, consider the link $L=L_1\sqcup L_2\subset S^3$ in Figure~\ref{fig:example1}. We see that if $S$ is the norm-minimizing surface in the third image (i.e. in the homology class of a punctured Seifert surface for $L_2$), then Dehn-filling $P_1$ according to the slope $\infty$ of $\boundary S\cap P_1$ yields $S^3\setminus\nu(L_2)$, in which $\widehat{S}$ is {\emph{not}} norm-minimizing. (In fact, $\widehat{S}$ is compressible.)
\end{remark}

Theorem~\ref{fillonethm} is similar in flavor to the following result of Baker and Taylor~\cite{baker}.

\begin{theorem}[{\cite[Theorem 4.6]{baker}}]\label{baker}
Let $X$ be a compact, connected, orientable, irreducible $3$-manifold whose boundary is a union of tori $P_1,\ldots, P_n$. Assume $X$ is not a cable space and $X\not\cong T^2\times I$. Let $S$ be a norm-minimizing surface in $X$. Let $N$ be the manifold obtained from $X$ by Dehn-filling $P_1$ according to the slope of $\boundary S\cap P_1$, and let $\widehat{S}\subset N$ be the surface obtained from $S$ by attaching disks to each component of $\boundary S\cap P_1$ inside the Dehn-filling solid torus.

There exists a finite set of slopes $E$ on $P_1$ so that when $\boundary S\cap P_1$ is not a slope in $E$, then $\widehat{S}$ is norm-minimizing.
\end{theorem}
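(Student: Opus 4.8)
The plan is to identify the \emph{bad} slopes---those $\gamma$ on $P_1$ for which some norm-minimizing surface $S$ with $\partial S\cap P_1=\gamma$ has $\widehat S$ not norm-minimizing---and to show each such $\gamma$ is the boundary slope of an \emph{essential} annulus in $X$ whose two boundary curves both lie on $P_1$. Finiteness of $E$ then follows from the finiteness of boundary slopes of essential surfaces on a fixed torus (Hatcher), the hypotheses $X\not\cong T^2\times I$ and ``$X$ not a cable space'' being exactly what prevents a continuum of such annulus slopes.

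First I would produce a comparison surface in $X$. Suppose $\widehat S$ is not norm-minimizing, and let $T\subset N$ be a norm-minimizing representative of $[\widehat S]$, isotoped to meet the filling solid torus $V$ in a minimal number $m$ of meridian disks. Then $T':=T\cap X$ is properly embedded in $X$, meets $P_1$ in $m$ coherently oriented parallel curves of slope $\gamma$, and satisfies $\chi(T)=\chi(T')+m$. Because a norm-minimizing surface maximizes Euler characteristic in its relative class, a compression or boundary compression would contradict minimality, so $T$ (and hence $T'$, the minimality of the meridian-disk count handling $P_1$) is incompressible and boundary-incompressible; thus both $S$ and $T'$ are essential in $X$.

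Next I would extract an annulus homologically. Writing $b=|\partial S\cap P_1|$, the hypothesis $\chi(T)>\chi(\widehat S)$ becomes $\chi(T')+m>\chi(S)+b$. The classes $[S]$ and $[T']$ in $H_2(X,\partial X)$ map to the same class in $H_2(N,\partial N)$, so their difference lies in the kernel of the Dehn-filling map; this kernel is carried by properly embedded annuli in $X$ with both boundary curves on $P_1$ of slope $\gamma$ (the annuli that cap off to spheres in $N$). If the difference vanishes, then $[T']=[S]$, and coherent orientation of the boundaries forces $m=b$, whence $\chi(T')>\chi(S)$, i.e. $\chi^+(T')<\chi^+(S)$---contradicting that $S$ is norm-minimizing. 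Hence the difference is nonzero, and by running innermost-disk and outermost-arc exchanges between $S$, $T'$, and a representative of this kernel class (using irreducibility of $X$), I would produce an embedded annulus $A\subset X$ with $\partial A\subset P_1$ of slope $\gamma$; the exclusions $X\not\cong T^2\times I$ and ``$X$ not a cable space'' are precisely what let me discard the boundary-parallel and degenerate Seifert-fibered configurations and conclude that $A$ is \emph{essential}.

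Finally I would invoke finiteness: an essential annulus is incompressible and boundary-incompressible, so by Hatcher's theorem only finitely many slopes on $P_1$ occur as boundaries of essential surfaces, in particular of essential annuli with both ends on $P_1$. Taking $E$ to be this finite set yields the theorem, since for $\gamma\notin E$ no such annulus exists and the failure above is impossible. I expect the \textbf{main obstacle} to be the third step: upgrading the purely homological kernel class to an honest essential, embedded annulus of boundary slope exactly $\gamma$, keeping track of Euler characteristics through the cut-and-paste so that none is lost, and using the structural hypotheses on $X$ to eliminate boundary-parallel and compressible annuli---the very configurations that would otherwise admit infinitely many bad slopes.
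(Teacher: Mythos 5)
Two preliminary remarks before the verdict. First, the paper does not prove Theorem~\ref{baker}: it is quoted from Baker--Taylor \cite{baker}, so there is no internal proof to compare against; your argument has to be judged on its own terms and against the machinery that \cite{baker} and this paper actually deploy for results of this kind. Judged that way, there is a genuine gap, and it sits exactly where the theorem is hard. Your homological step is not what you think it is: the capping map has \emph{zero} kernel. From the exact sequence of the triple $(N,\partial N\cup V,\partial N)$, using $H_*(\partial N\cup V,\partial N)\cong H_*(V)$ and the excision $H_2(N,\partial N\cup V)\cong H_2(X,\partial X)$, the map $[T]\mapsto[T\cap X]$ is an isomorphism from $H_2(N,\partial N)$ onto the subgroup of $H_2(X,\partial X)$ of classes whose $P_1$-boundary is a multiple of $\gamma$, and capping is its inverse. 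Hence $[T']=[S]$ always; the branch in which you extract an essential annulus from a nonzero kernel class --- the branch you flag as the main obstacle --- never occurs. Your entire proof therefore rests on the assertion that when $[T']=[S]$, ``coherent orientation of the boundaries forces $m=b$.'' That assertion is false: $[T']=[S]$ forces only the \emph{algebraic} count of the meridians of $T'$ on $P_1$ to equal $b$, while a norm-minimizing $T\subset N$, even after isotopy minimizing $m$, can meet the core $K$ of $V$ with both signs, so that $m>b$. Indeed, the honest conclusion of your Euler-characteristic computation (from $\chi(T')+m>\chi(S)+b$ together with $\chi^+(T')\ge\chi^+(S)$) is precisely that $m>b$, not a contradiction.

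If the argument were right, it would show $E=\emptyset$ for every $X$ satisfying the hypotheses, which proves far too much: it would reprove Gabai's Property~R theorem \cite{ft3m3} in a few lines, and it contradicts examples in this paper. For the knot $L\subset Y=\Sigma(2,5,7)$ of Figure~\ref{fig:mazur}, zero-surgery gives $N\cong S^1\times S^2$ and $b=1$; the norm-minimizing representative of $[\widehat{S}]$ is a sphere, which meets $K$ algebraically once but geometrically at least three times, since if it met $K$ exactly once then $L$ would bound a disk in $Y$, which it does not. Here $\widehat{S}$ is not norm-minimizing, so the slope $0$ must lie in $E$, yet your proof would certify it as a good slope. (The remark following Theorem~\ref{fillonethm}, concerning Figure~\ref{fig:example1}, gives a similar single-filling example where $\widehat{S}$ is even compressible.) The entire content of the theorem is to control representatives of $[\widehat{S}]$ that intersect the core incoherently; that is what the sutured-manifold and foliation technology in this paper (product-disk decompositions as in Lemma~\ref{lemmadiskdecomp}, suspension changes) and the band-taut sutured-manifold machinery in Baker--Taylor's own proof exist to do. Your instinct that the exclusion of $T^2\times I$ and cable spaces reflects essential annuli with both boundary curves on $P_1$ is sound, but the bridge from ``$\widehat{S}$ is not norm-minimizing'' to the existence of such an annulus is exactly the missing content, and homology alone cannot supply it.
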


Theorem~\ref{baker} applies to a more general class of $3$-manifolds than link complements in rational homology spheres, but Theorem~\ref{fillonethm} yields a stronger conclusion in its setting. In the same vein, we consider another reasonable analogue of Theorem~\ref{propr} in the setting of link complements.

\newtheorem*{2compthm}{Theorem~\ref{2compthm}}

\begin{2compthm}
Let $L=L_1\sqcup L_2$ be a $2$-component link in a rational homology $3$-sphere $Y$. Assume $\lk(L_1,L_2)\neq 0$ and that $X:=Y\setminus\nu(L)$ has nondegenerate Thurston norm. 

Let $S$ be a properly norm-minimizing surface in $X$. Let $\widehat{X}$ be the closed $3$-manifold obtained from $X$ by Dehn-filling both components $P_i=\boundary(\nu(L_i))$ of $\boundary X$ according to the slope $\boundary S\cap P_i$. Let $\widehat{S}$ be the closed surface in $\widehat{X}$ obtained by capping off each component of $\boundary S$ with a disk.

There exists a finite set $E\subset H_2(X,\boundary X;\mathbb{Z})$ so that if $[S]\not\in E$, then $\widehat{S}$ is norm-minimizing.

\end{2compthm}

\begin{remark}
Applying Theorem~\ref{baker} sequentially to each of $P_1$ and $P_2$ does not imply Theorem~\ref{2compthm}. 
When $L=L_1\sqcup\cdots\sqcup L_n$ is an $n$-component link, 
 Theorem~\ref{baker} is interesting when applied to $1,2,\ldots, n-1$ boundary components of $X$, but cannot constrain the number of elements of $H_2(X,\boundary X;\R)$ whose properly norm-minimizing representative $S$ fails to be norm minimizing when capped off in $Y_{\boundary S}(L)$ ($Y$ surgered along all $n$ components of $L$). We address the $n$-component link case in Theorem~\ref{ncompthm}.
\end{remark}

Theorem~\ref{2compthm} is a consequence of the following theorem.

\newtheorem*{mingenuscor}{Theorem~\ref{mingenuscor}}

\begin{mingenuscor}
Let $L=L_1\sqcup L_2$ be an $2$-component link in a rational homology $3$-sphere $Y$. Assume $\lk(L_1,L_2)\neq 0$ and that $X:=Y\setminus\nu(L)$ has nondegenerate Thurston norm. Let $S$ be a properly norm-minimizing surface in $X$ with $[S]$ primitive and not a corner of the Thurston norm. Let $\widehat{X}$ denote the closed manifold obtained from $X$ by Dehn filling each boundary component of $X$ according to the slope of $\boundary S\cap\boundary X$, and let $\widehat{S}\subset \widehat{X}$ be the closed surface obtained from $S$ by capping off each component of $\boundary S$ by a disk in a Dehn-filling solid torus. 

If $\widehat{S}$ is {\emph{not}} norm-minimizing, then either $g(S)=1$ or $g([S])$ is minimal among all classes in the interior of the same face of the Thurston norm as $[S]$. 
In particular, if $[S']$ is a primitive class in the interior of the same face as $[S]$ and $\widehat{S'}$ is also not norm-minimizing, then $g([S])=g([S'])$.
\end{mingenuscor}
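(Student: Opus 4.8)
The plan is to convert the statement into a question about how a hypothetical lower-genus competitor must meet the two surgery cores, and then to kill that competitor using the intersection combinatorics of Section~\ref{sec:graphs} together with Gabai's sutured-manifold theory, precisely when $[S]$ is not of minimal genus in its face. I will argue the contrapositive of the dichotomy.

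First I would record the bookkeeping. Capping a connected surface by disks preserves genus, so $g(\widehat S)=g(S)$, and since $x$ is nondegenerate (so $S$ has no sphere or disk pieces and $-\chi(S)=x([S])$) we get $-\chi(\widehat S)=2g(S)-2$. Hence $\widehat S$ fails to be norm-minimizing exactly when some closed surface $R$ with $[R]=[\widehat S]$ has $g(R)<g(S)$. Write $K_1,K_2\subset\widehat X$ for the cores of the Dehn-filling solid tori, so that $\widehat X\setminus\nu(K_1\sqcup K_2)=X$ and $\widehat S\cdot K_i=b_i$, the number of coherently oriented components of $\partial S$ on $P_i$. Put $R$ in general position with respect to $K_1\sqcup K_2$, meeting $K_i$ in $n_i^+$ positive and $n_i^-$ negative points with $n_i^+-n_i^-=b_i$, and set $R_0:=R\cap X$. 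Because $\partial$ is injective on $H_2(X,\partial X;\R)$ (this is where $\lk(L_1,L_2)\neq 0$ and $Y$ being a $\Q$HS enter), the algebraic boundary of $R_0$ forces $[R_0]=[S]$. A routine Euler-characteristic count, after discarding any sphere or disk components of $R_0$, then yields $g(S)\le g(R)+n_1^-+n_2^-$. Consequently, if $R$ could be isotoped to meet each $K_i$ in exactly $b_i$ points (that is, $n_1^-=n_2^-=0$), then $R_0$ would beat $S$ in Euler characteristic, contradicting that $S$ is norm-minimizing. So the whole difficulty is concentrated in \emph{forced, geometrically cancelling} core intersections.

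Next I would introduce a second surface to remove those cancellations. Suppose, contrary to the conclusion, that $g([S])$ is not minimal over the cone $C$, so some $\beta\in C$ has a properly norm-minimizing representative $T$ with $g(T)<g(S)$. Because $[S]$ and $\beta$ lie in the interior of a common face, Proposition~\ref{cutpasteprop} makes the relevant cut-and-paste combinations taut, so $S$, $T$ and $R_0$ are mutually incompressible and no innermost-disk simplification is available. I would then study $R_0\cap T$ as a fat-vertex graph in the sense of Section~\ref{sec:graphs}, drawn on $T$, with fat vertices the components of $T\cap(P_1\sqcup P_2)$ and edges the arcs of $R_0\cap T$ (here $\partial R_0$ and $\partial T$ realize distinct slopes on each $P_i$). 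The point of taking $T$ of small genus is that $T$ is then topologically simple, so Euler-characteristic and valence estimates for the graph on $T$ force a disk face or Scharlemann-cycle region; tautness rules out trivial loops, and the resulting region supplies either an isotopy or a genus-reducing surgery of $R$, supported near $T$, that lowers $n_1^-+n_2^-$ without raising $g(R)$. Iterating drives $n_1^-+n_2^-$ to $0$, which by the previous paragraph contradicts $g(R)<g(S)$. Hence no lower-genus $R$ exists and $\widehat S$ is norm-minimizing—exactly the contrapositive of the asserted dichotomy.

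I expect the combinatorial core of the third paragraph to be the main obstacle: guaranteeing that the fat-vertex graph of $R_0\cap T$ always exhibits a removable cancelling intersection requires the face-counting machinery of Section~\ref{sec:graphs} together with the tautness of the cut-and-paste sums from Gabai's theory. The genus-one exception should enter precisely here, since when $g(S)=1$ the surgered pieces produced by the graph analysis degenerate into annuli or tori rather than surfaces of negative Euler characteristic; the complexity inequalities then fail to be strict and the reduction stalls. This is the origin of the $g(S)=1$ clause, and it is realized by the link of Figure~\ref{fig:example2}. Finally, the ``in particular'' statement is immediate: if primitive $[S]$ and $[S']$ lie in the interior of the same face and $\widehat S,\widehat{S'}$ both fail to be norm-minimizing, then, the genus-one case being trivial, each of $g([S])$ and $g([S'])$ equals the common minimal genus over that face, so $g([S])=g([S'])$.
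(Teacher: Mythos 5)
Your first two paragraphs are fine as far as they go: capping preserves genus, injectivity of $\partial$ on $H_2(X,\partial X;\R)$ (this is where $\lk(L_1,L_2)\neq 0$ enters) forces $[R_0]=[S]$, and the count $g(S)\le g(R)+n_1^-+n_2^-$ is correct, so the difficulty is indeed concentrated in the cancelling intersections of a competitor $R$ with the surgery cores. But the third paragraph, which carries the entire weight of the proof, is an assertion rather than an argument. You claim that the graph of $R_0\cap T$ on $T$ must contain a disk face or Scharlemann-cycle region, and that such a region yields a move on $R$ lowering $n_1^-+n_2^-$ without raising $g(R)$. No mechanism is given, and the machinery you cite does not supply one: the fat-vertex graphs of Section~\ref{sec:graphs} are built from \emph{product disks} in the complementary sutured manifold of a cut-and-paste surface $aR+bT$, where both pieces are norm-minimizing and positioned via Proposition~\ref{cutpasteprop}, and their faces produce \emph{product annuli}; this says nothing about the intersection pattern of an arbitrary competitor $R_0$ (which need not even be incompressible in $X$, since a compressing disk for $R_0$ may be inessential in the closed surface $R$) with a norm-minimizing $T$. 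There is no Euler-characteristic or valence count forcing disk faces here: $R_0$ can have arbitrarily large genus and many boundary components, so faces of your graph can be arbitrary subsurfaces, and Scharlemann-cycle arguments require a labelling/parity structure from a specific surgery setup that is absent. A concrete symptom that the sketch cannot be completed as written: your argument never uses the hypothesis that $[S]$ is not a corner of the Thurston norm in any essential way, yet that hypothesis cannot be dropped --- in the example of Figure~\ref{fig:example1}, the corner classes have $\widehat{S_1},\widehat{S_2}$ not norm-minimizing.

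For contrast, the paper argues in the opposite direction: rather than chasing a lower-genus competitor, it certifies that $\widehat{S}$ is norm-minimizing by constructing a taut foliation of $\widehat{X}$ with $\widehat{S}$ as a leaf and quoting Theorem~\ref{studyfoliation}. The non-corner hypothesis gives $cS=aR+bT$ for adjacent corners; the arcs of intersection give product disks, whose fat-vertex graph gives product annuli $A_i$; after decomposing along an $A_i$, suspension changes (Operation~\ref{suspensionchange}), $I$-bundle replacement, and Lemma~\ref{anyhomeo} match the suspension holonomies so the excised solid torus can be refilled with a foliated one. This fails only in the paper's ``type 3(iii)'' configuration, and Proposition~\ref{mingenusprop} shows type 3(iii) forces a region $\Sigma_j\subset S_+$ with $g(\Sigma_j)=g(S)$ that embeds in every $xR+yT$, whence $g(S)\le g(\beta)$ for all $\beta$ in the face. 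Note also that the $g(S)=1$ exception arises because Lemma~\ref{anyhomeo} requires positive genus in the complementary pieces of the annulus, not (as you speculate) from surgered pieces degenerating to annuli or tori. As it stands, your proposal has a gap precisely where the theorem's content lies.
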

By genus of a homology class, we mean $g(\alpha)$ to be the genus of a properly norm-minimizing surface representing $\alpha$. That is, in an $n$-component link complement, \[g(\alpha)=\frac{1}{2}\left(2+x_M(\alpha)-\sum_{i=1}^n|\langle\alpha, P_i\rangle|\right).\]

\begin{proof}[Proof of Theorem~\ref{2compthm} from Theorem~\ref{mingenuscor}]\label{proofof2compthm}
Assuming Theorem~\ref{mingenuscor}, to deduce Theorem~\ref{2compthm} we must show only that there are finitely many primitive elements of $H_2(X,\boundary X;\R)$ within each face of the Thurston norm on $X$ of any fixed genus. This fact follows from the following proposition.

\begin{proposition}\label{g0prop}
Fix an integer $g$. Let $\Sigma_{g}\subset H_2(X,\boundary X;\R)$ contain exactly the primitive elements of genus-${g}$. Then $\Sigma_{g}$ is finite.
\end{proposition}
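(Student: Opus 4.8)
\emph{The plan} is to reduce the statement to a uniform bound on the number of boundary curves of a properly norm-minimizing representative, and then invoke nondegeneracy of $x$. Since Theorem~\ref{2compthm} concerns a $2$-component link, here $X$ is a $2$-component link complement, so $H_2(X,\boundary X;\R)\cong\R^2$ and each class $\alpha$ has two boundary multiplicities $|\langle\alpha,P_1\rangle|,|\langle\alpha,P_2\rangle|$. Writing $b(\alpha)=\sum_{i=1}^2|\langle\alpha,P_i\rangle|$ for the total number of boundary components of a properly norm-minimizing surface representing $\alpha$, the genus formula gives $x(\alpha)=b(\alpha)+2g-2$ for every $\alpha\in\Sigma_g$. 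Because $x$ is nondegenerate, the ball $\{x\le R\}$ is compact for each $R$ and so meets the lattice $H_2(X,\boundary X;\Z)$ in finitely many points, hence contains only finitely many primitive classes. Thus it suffices to bound $x(\alpha)$, and for that it suffices to bound $b(\alpha)$, uniformly over all primitive $\alpha$.

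The key step, therefore, is to show that $b(\alpha)$ is bounded independently of the primitive class $\alpha$. For each boundary torus $P_i$ I would consider the restriction of the boundary map, $\Phi_i\colon H_2(X,\boundary X;\Z)\to H_1(P_i;\Z)$, $\alpha\mapsto\boundary\alpha|_{P_i}$. This map is injective: a nonzero $\alpha\in\ker\Phi_i$ is represented by a surface disjoint from $L_i$ whose boundary lies entirely on the other component, i.e.\ (after filling $\nu(L_i)$) a rational Seifert surface for the other component missing $L_i$, which is impossible since $\lk(L_1,L_2)\neq0$. Both lattices have rank $2$, so $\Phi_i$ has image of finite index $\delta_i>0$. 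Now the number of boundary components of a properly norm-minimizing surface on $P_i$ is exactly the divisibility of $\Phi_i(\alpha)$ in $H_1(P_i;\Z)$. Putting $\Phi_i$ in Smith normal form $\Phi_i=U\,\operatorname{diag}(s_1,s_2)\,V$ with $U,V\in GL_2(\Z)$ and $s_1\mid s_2$, and using that $V\alpha$ is primitive whenever $\alpha$ is, a short divisibility computation shows that the divisibility of $\Phi_i(\alpha)$ divides $s_2$, and hence is at most $s_1s_2=\delta_i$. Therefore $|\langle\alpha,P_i\rangle|\le\delta_i$ and $b(\alpha)\le\delta_1+\delta_2$ for every primitive $\alpha$.

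Combining the two steps, every $\alpha\in\Sigma_g$ satisfies $x(\alpha)\le\delta_1+\delta_2+2g-2$, so $\Sigma_g$ lies in a compact $x$-ball and is finite. The main obstacle is the uniform boundary bound of the second paragraph; this is precisely where $\lk(L_1,L_2)\neq0$ enters, and it is special to the $2$-component case. As a concrete check, in $S^3$ with $\ell=\lk(L_1,L_2)$ and the Seifert basis $\alpha=aF_1+bF_2$ one computes directly that $|\langle\alpha,P_1\rangle|=\gcd(a,b\ell)=\gcd(a,\ell)$ and $|\langle\alpha,P_2\rangle|=\gcd(b,a\ell)=\gcd(b,\ell)$ for primitive $\alpha$, so $b(\alpha)\le 2|\ell|$, matching $\delta_i=|\ell|$. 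I would also remark that for $n\ge 3$ the analogous divisibilities are unbounded on primitive classes (e.g.\ $\alpha=(M,M,1)$), which is consistent with the higher-component results controlling only an $(n-2)$-dimensional set of rays rather than a finite set.
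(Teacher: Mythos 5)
Your proof is correct, and it shares the paper's overall skeleton: reduce finiteness to a uniform bound on the number of boundary components of a properly norm-minimizing representative of a primitive class, then use nondegeneracy to conclude that $\Sigma_g$ lies in a compact norm ball and so contains finitely many lattice points. Where you genuinely differ is in how that boundary bound is obtained. The paper writes $[S]=p\alpha_1+q\alpha_2$ in the basis of punctured (rational) Seifert surface classes with $p,q$ coprime, observes that the number of boundary curves on $P_1$ and $P_2$ is $\gcd(pm_1,qm_2\lk)$ and $\gcd(pm_1\lk,qm_2)$ respectively, and estimates these gcds directly, arriving at the explicit constant $2|\lk|m_1^2m_2^2$. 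You instead argue abstractly: the restricted boundary map $\Phi_i$ is an injective map of rank-$2$ lattices, so by Smith normal form the divisibility of the image of a primitive class divides the larger invariant factor $s_2$ and is therefore at most the index $\delta_i$. These are two presentations of the same arithmetic, but yours is basis-free: it does not require primitive classes to be coprime \emph{integer} combinations of $\alpha_1,\alpha_2$ (i.e., that the $\alpha_i$ generate the full lattice), which the paper's phrasing tacitly assumes. What the paper's concrete computation buys in exchange is the explicit constant, which is reused verbatim later: it is the $C=2|\lk(L_1,L_2)|m_1^2m_2^2$ of Corollary~\ref{cor2}, so the quantitative form matters for the paper's bounds on $|E|$. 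Two small points to tighten in your write-up. First, your injectivity argument for $\Phi_i$ only treats the case where $\alpha$ has nonzero boundary on the other torus; if $\alpha$ has zero boundary on both tori, a representative is a closed surface, and one must use that closed surfaces in a rational homology sphere link complement are spanned (mod torsion) by the peripheral tori and hence vanish in $H_2(X,\boundary X;\R)$ --- this is exactly where the rational homology sphere hypothesis enters, as the paper notes in its introduction. Second, in your closing aside, the class $(M,M,1)$ does not actually exhibit unbounded divisibility for a fixed link: on $P_1$ it gives $\gcd(M,\,M\lk_{12}+\lk_{13})=\gcd(M,\lk_{13})$, which is bounded. A class such as $(M,1,M-1)$ with $\lk_{12}=\lk_{13}=1$ does the job, and the qualitative point you draw from it (that the $n\ge 3$ case cannot be controlled by a finite set) remains valid.
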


\begin{proof}[Proof of Proposition~\ref{g0prop}]

Let $\lk=\lk(L_1,L_2)$. Let $\beta\in H_2(X,\boundary X;\R)$ be a norm-minimizing class with properly norm-minimizing representative $S$. We claim $|\boundary S|$ is bounded above uniformly (with bound depending only on $Y$ and $L$). 
Assuming this, then there is a uniform upper bound on $x([S])$ for $S\in\Sigma_{g}$. 
Nondegeneracy of Thurston norm then implies that $\Sigma_{g}$ is finite.

To see that $|\boundary S|$ is bounded above uniformly, 
say $[S]=p\alpha_1+q\alpha_2$,  
where $\alpha_i$ is the homology class of a punctured Seifert surface for $L_i$ and $p,q$ are coprime integers. Say $\boundary\alpha_i=m_i\gamma_i$ for some primitive $\gamma_i\in\boundary Y\setminus\nu(L)$ with $m_i>0$; that is, $m_i$ is the smallest positive number with $m_i[L_i]=0\in H_1(Y;\Z)$. Note $m_1\lk$ and $m_2\lk$ are integers, by definition of $\lk$.

Then $\boundary S$ meets $P_1$ in slope $-(qm_2\lk)/(pm_1)$ and $\boundary S$ meets $P_2$ in slope $-(pm_1\lk)/(qm_2)$. Then $\boundary S$ has $\gcd(-qm_2\lk,pm_1)$ components on $P_1$ and $\gcd(-pm_1\lk,qm_2)$ components on $P_2$. If $m_1=m_2=1$, then coprimeness of $p$ and $q$ immediately implies $|\boundary S|\le|\lk|+1$. But in general,

\begin{align*}
|\boundary S|&=\gcd(pm_1,qm_2\lk)+\gcd(pm_1\lk,qm_2)\\
&\le2|\lk|\gcd(pm_1,qm_2)\\
&\le2|\lk|\gcd(p,m_2)\gcd(q,m_1)\gcd(m_1,m_2)\\
&\le2|\lk|m_1^2m_2^2.
\end{align*}

\end{proof}

This completes the proof of Theorem~\ref{2compthm}.

\end{proof}

We prove Theorem~\ref{mingenuscor} in Section~\ref{sec:proof}.

Theorem~\ref{fillonethm} can be extended to $n$-component links.

\begin{theorem}\label{fillonethm2}
Let $L=L_1\sqcup \cdots\sqcup L_n$ be an $n$-component link in a rational homology $3$-sphere $Y$ with pairwise nonzero linking numbers. Fix an integer $k\in\{1,\ldots,n-1\}$. Assume $X:=Y\setminus\nu(L)$ has nondegenerate Thurston norm. Let $P_i=\boundary\overline{\nu(L_i)}$. Let $S$ be a properly norm-minimizing surface in $X$ which meets every boundary component of $X$, and let $N$ denote the result of Dehn-filling $X$ at $P_i$ according to the slope of $\boundary S\cap P_i$ for $i=1,\ldots, k$. Let $\widehat{S}$ denote the surface in $N$ obtained from $S$ by capping off each component of $\boundary S\cap P_i$ by a disk in the Dehn-filling solid torus, for $i=1,\ldots, k$.

Assume $[S]$ is not a corner of the Thurston norm on $X$. Then $\widehat{S}$ is norm-minimizing.
\end{theorem}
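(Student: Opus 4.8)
The plan is to exhibit $\widehat S$ as a compact leaf of a taut foliation of $N$, mirroring Gabai's proof of Theorem~\ref{propr} and of Theorem~\ref{fillonethm}; since a compact leaf of a taut foliation is norm-minimizing in its homology class (Thurston~\cite{thurston}, Gabai~\cite{ft3m3}), this yields the theorem at once. The reason to build a single foliation and fill all $k$ tori simultaneously, rather than filling one component at a time and iterating, is that an intermediate link complement need not have nondegenerate Thurston norm (compare the degenerate alternative appearing in Theorem~\ref{ncompthm}), so the $k$ fillings must be handled together.

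First I would set up the sutured manifold. Because $S$ is properly norm-minimizing and $x_X$ is nondegenerate, $\chi^+(S)=-\chi(S)$, and since $\partial S$ meets each $P_i$ in coherently oriented parallel curves (the hypothesis that $S$ meets every $P_i$ guaranteeing a genuine suture, hence a well-defined filling slope, on each torus), cutting $X$ along $S$ produces a taut sutured manifold $(M,\gamma)$ whose suture on each $P_i$ has slope $\partial S\cap P_i$. As $[S]$ lies in the interior of a face of $B_{x_X}$ rather than at a corner, I can take $S$ to be a positive combination of the norm-minimizing corner representatives spanning that face, as in Proposition~\ref{cutpasteprop}; this makes the decomposition taut and well-groomed. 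Next I would apply Gabai's existence theorem to obtain a taut finite-depth foliation $\mathcal F$ of $X$ having $S$ as a leaf and tangent to each $P_i$ with leaf-slope $\partial S\cap P_i$. For $i=1,\dots,k$ I would then Dehn-fill $P_i$ along this slope and extend $\mathcal F$ across each filling solid torus so that the meridian disks capping $\partial S$ are leaves; filling along the leaf slope introduces no Reeb component, so tautness is preserved. The result is a taut foliation $\widehat{\mathcal F}$ of $N$ with $\widehat S$ as a compact leaf, whence $\widehat S$ is norm-minimizing.

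The hard part will be the extension step: showing that filling all $k$ tori at once preserves tautness, equivalently that no competing surface $\widehat T$ with $[\widehat T]=[\widehat S]$ has larger Euler characteristic. To see why this is subtle --- and why the not-a-corner hypothesis is indispensable rather than cosmetic --- note first that cutting such a $\widehat T$ along the filling tori yields $T:=\widehat T\cap X$ with $[T]=[S]$: indeed $[\widehat T]=[\widehat S]$ forces $\widehat T$ and $\widehat S$ to have equal algebraic intersection with each filling core, and together with $\partial T=\partial S$ on $P_{k+1},\dots,P_n$ this pins down $\partial T=\partial S$, so $[T]=[S]$ by injectivity of $\partial_X\colon H_2(X,\partial X)\to H_1(\partial X)$ (which uses $\lk(L_i,L_j)\neq 0$ and that $Y$ is a rational homology sphere). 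Thus $T$ cannot beat $S$ inside $X$. However, $T$ may meet the filling tori in many meridian disks of mixed orientation, so upon capping off $\widehat T$ could a priori still beat $\widehat S$, and a bare Euler-characteristic count does not rule this out. Ruling it out is precisely the tautness of the filled sutured manifold, and this is exactly where being in the interior of a face is used: as the Remark following Theorem~\ref{fillonethm} shows (with $S$ a punctured Seifert surface for $L_2$), for a corner class the filling can make $\widehat S$ compressible. I expect verifying that the not-a-corner hypothesis together with nondegeneracy of $x_X$ forbids the mixed-orientation disks --- i.e.\ that no product or annular region survives the simultaneous filling of all $k$ solid tori while $n-k\ge 1$ boundary components remain --- to be the technical core of the proof.
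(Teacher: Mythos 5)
Your overall frame (realize $\widehat{S}$ as a compact leaf of a taut foliation, equivalently show the complementary sutured manifold to $\widehat{S}$ is taut) is the right one, and your reductions (to properly norm-minimizing $S$, and to a cut-and-paste sum $S=aR+bT$ via Proposition~\ref{cutpasteprop}) match the paper. But the step you yourself call the technical core --- extending the foliation across the $k$ filling solid tori --- is a genuine gap, and the justification you offer (``filling along the leaf slope introduces no Reeb component, so tautness is preserved'') does not work. The taut foliation $\F$ produced by Theorem~\ref{havefoliation1} restricts on each torus $P_i$ to a foliation that in general has \emph{non-compact} leaves: away from the compact leaves $\boundary S\cap P_i$ it is a suspension of a nontrivial homeomorphism of $I$, with leaves spiraling toward the compact ones. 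It is not a product foliation by circles of the filling slope, so one cannot cap leaves off by meridian disks at all; absence of Reeb components does not cure this. The paper states this obstruction explicitly just before Section~\ref{sec:operations}. Note also that your extension step, as written, makes no essential use of the not-a-corner hypothesis (tautness of the complementary sutured manifold holds for \emph{any} norm-minimizing surface, cut-and-paste or not), so if the step were valid it would equally ``prove'' the corner case --- contradicting the Remark after Theorem~\ref{fillonethm}, where $\widehat{S}$ is compressible.

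What is missing is the actual mechanism, which in the paper is purely sutured-manifold-theoretic and needs no foliation. Writing $S=aR+bT$, each arc of intersection of $aR$ with $bT$ gives a product disk in the complementary sutured manifold $(M,\gamma)$ to $S$ (Figure~\ref{fig:cutandpaste}). Because the pairwise linking numbers are nonzero, $[R]$ and $[T]$ can be chosen so that $\boundary R$ and $\boundary T$ meet every $P_i$ in distinct slopes, hence intersect on every $P_i$; concatenating the resulting disks yields disjoint product disks joining each suture on the filled tori $P_1,\ldots,P_k$ to sutures on the unfilled torus $P_n$ (this is exactly where $k\le n-1$ enters). The complementary sutured manifold to $\widehat{S}$ in $N$ is obtained from $(M,\gamma)$ by attaching $3$-dimensional $2$-handles along precisely those sutures and forgetting them, and Lemma~\ref{lemmadiskdecomp} identifies this, up to homeomorphism, with a sequence of product-disk decompositions, which preserve tautness by Lemma~\ref{tautdecomp}; tautness then gives that $\widehat{S}$ is norm-minimizing (Remark~\ref{tautremark}). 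If you insist on the foliation picture, these same product disks are what allow the suspension-change operation (Operation~\ref{suspensionchange}) to push the nontrivial holonomy of $\F$ off the filled sutures onto $P_n$, after which capping by meridian disks becomes legitimate; without the product disks there is no way to do this. So the not-a-corner and nonzero-linking hypotheses are used to manufacture product disks, not --- as you conjecture --- to rule out mixed-orientation meridian disks of a competing surface.
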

The proof is essentially the same as for Theorem~\ref{fillonethm}. We sketch both in Section~\ref{sec:sutured}.

Again, Theorem~\ref{fillonethm2} may be thought of as a version of Theorem~\ref{baker} restricted to a class of link complements, but yielding a stronger conclusion in that restricted setting. Similarly, Theorems~\ref{mingenuscor} and~\ref{2compthm} extend to $n$-component link complements.

\newtheorem*{mingenuscor2}{Theorem~\ref{mingenuscor2}}
\begin{mingenuscor2}
Let $L=L_1\sqcup\cdots\sqcup L_n$ be an $n$-component link in a rational homology $3$-sphere $Y$. Assume $\lk(L_i,L_j)\neq 0$ for each $i\neq j$. Let $X:=Y\setminus\nu(L)$. Assume $X$ has nondegenerate Thurston norm $x$.

Let $S$ be a properly norm-minimizing surface meeting every component of $\boundary X$ so that $[S]$ is primitive and is not a corner of $x$.

Let $\widehat{X}$ be the closed manifold obtained by Dehn-filling each boundary component of $X$ according to the slope of $\boundary S$. Let $\widehat{S}$ be the closed surface in $\widehat{X}$ obtained from $S$ by capping off each boundary component of $S$ by a disk in a Dehn-filling solid torus. Then at least one of the following is true:
\begin{itemize}
\item $g(S)=1$,
\item $\widehat{S}$ is norm-minimizing,
\item $g([S])\le g(\beta)$ whenever $\beta\in C$.
\end{itemize}
\end{mingenuscor2}

Theorem~\ref{mingenuscor2} is a generalization of Theorem~\ref{mingenuscor}. Note that when $n=2$, if $\beta,\alpha\neq 0\in H_2(X,\boundary X;\R)$ are not scalar multiples, then $\boundary\alpha,\boundary\beta$ have different slopes on each boundary component of $X$. The proof of Theorem~\ref{mingenuscor2} is exactly the same as the proof of Theorem~\ref{mingenuscor}, but we first prove Theorem~\ref{mingenuscor} separately as the statement and notation are simpler. We prove Theorem~\ref{mingenuscor2} in Section~\ref{sec:ncomp}.

\newtheorem*{ncompthm}{Theorem~\ref{ncompthm}}

\begin{ncompthm}
Let $L=L_1\sqcup\cdots\sqcup L_n$ be an $n$-component link $(n>1)$ in a rational homology $3$-sphere $Y$ with pairwise nonzero linking numbers. Assume $X:=Y\setminus\nu(L)$ has nondegenerate Thurston norm.

Let $S$ be a properly norm-minimizing surface in $X$ meeting every component of $\boundary X$ . Let $\widehat{X}$ be the closed manifold obtained from $X$ by Dehn filling $X$ according to $\boundary S$, and let $\widehat{S}\subset\widehat{X}$ be the closed surface obtained from capping off each boundary component of $S$ within the Dehn-filling solid tori.

Let $\widetilde{Y}$ be the $3$-manifold obtained from $Y$ by surgering $Y$ along $L_3\sqcup\cdots\sqcup L_n$ according to $\boundary S$.

There exists an $(n-2)$-dimensional set of rays $E$ from the origin of $H_2(X,\boundary X;\R)\cong\R^n$ so that if $[S]\not\in E$, then either $\widehat{S}$ is norm-minimizing or $\widetilde{Y}\setminus\nu(L_1\sqcup L_2)$ has degenerate Thurston norm.
\end{ncompthm}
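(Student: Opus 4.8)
The plan is to surger off the components $L_3,\ldots,L_n$ (so as to land inside a rational homology sphere carrying only the two-component link $L_1\sqcup L_2$), apply Theorem~\ref{2compthm} there, and then check that every way this reduction can break down is confined to a codimension-one locus of rays, so that the exceptional set $E$ is genuinely $(n-2)$-dimensional. First I would fix the basis of punctured Seifert surfaces $\alpha_1,\ldots,\alpha_n$ for $H_2(X,\boundary X;\Z)\cong\Z^n$ and write $[S]=(a_1,\ldots,a_n)$, so that the slope of $\boundary S$ on $P_j=\boundary\overline{\nu(L_j)}$ is the degree-zero ratio $q_j=(\sum_{i\neq j}a_i\lk(L_i,L_j))/a_j$, a function of the ray $[S]$. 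By construction $\widetilde{X}:=\widetilde{Y}\setminus\nu(L_1\sqcup L_2)$ is exactly $X$ Dehn-filled along $P_3,\ldots,P_n$ according to $\boundary S$, and if $\widetilde{S}\subset\widetilde{X}$ denotes $S$ with its boundary on $P_3,\ldots,P_n$ capped off, then filling the two remaining tori and capping recovers $\widehat{X}$ and $\widehat{S}$, so that $\widehat{S}=\widehat{\widetilde{S}}$. Provided $[S]$ is not a corner of the Thurston norm on $X$, Theorem~\ref{fillonethm2} (after relabeling, filling the $n-2$ slopes $q_3,\ldots,q_n$) says $\widetilde{S}$ is norm-minimizing; it is moreover \emph{properly} norm-minimizing, since filling $P_3,\ldots,P_n$ does not disturb $\boundary S$ on $P_1,P_2$. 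I therefore put the set of corners, i.e.\ the cone on the $(n-2)$-skeleton of the unit ball of the norm on $X$, into $E$; as a set of rays this is $(n-2)$-dimensional.

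Next I would peel off the remaining obstructions to invoking Theorem~\ref{2compthm} for $L_1\sqcup L_2\subset\widetilde{Y}$. Each is the vanishing locus of a single homogeneous (rational) function of $(a_1,\ldots,a_n)$, hence a codimension-one cone in $\R^n$, meeting $S^{n-1}$ in an $(n-2)$-dimensional set. Into $E$ I put: the locus where $\widetilde{Y}$ fails to be a rational homology sphere, which is the vanishing of the determinant of the surgery linking matrix of $L_3,\ldots,L_n$; and the locus where $\lk_{\widetilde{Y}}(L_1,L_2)=0$, a single rational equation in the slopes. Off these loci $\widetilde{Y}$ is a rational homology sphere in which $L_1$ and $L_2$ have nonzero linking number, so $\boundary$ is injective on $H_2(\widetilde{X},\boundary\widetilde{X})$ and Theorem~\ref{2compthm} is available---\emph{unless} $\widetilde{X}$ has degenerate Thurston norm, in which case we have already landed in the second alternative of the conclusion. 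This degenerate locus is precisely the one I do not attempt to excise from $H_2(X,\boundary X;\R)$, which is exactly why ``$\widetilde{Y}\setminus\nu(L_1\sqcup L_2)$ has degenerate Thurston norm'' appears as a stated alternative rather than as a piece of $E$.

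Finally I would handle the genuinely two-parameter part. For each admissible surgery $(q_3,\ldots,q_n)$, Theorem~\ref{2compthm} yields a finite exceptional set of primitive classes in $H_2(\widetilde{X},\boundary\widetilde{X};\Z)$---equivalently finitely many slope pairs $(q_1,q_2)$, since $\boundary$ is injective there---outside of which $\widehat{\widetilde{S}}=\widehat{S}$ is norm-minimizing. The map $[S]\mapsto(q_3,\ldots,q_n)$ sends $S^{n-1}$ onto an $(n-2)$-dimensional base whose generic fibers are one-dimensional, and along each fiber the remaining pair $(q_1,q_2)$ varies; thus over each point of the base the exceptional set is finite, and the total exceptional set, fibered over an $(n-2)$-dimensional base with finite fibers, is $(n-2)$-dimensional. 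Adjoining it leaves $E$ of dimension $n-2$, and by construction $[S]\notin E$ forces either $\widehat{S}$ norm-minimizing or $\widetilde{X}$ degenerate.

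The hard part will be the dimension accounting rather than any single geometric input. Confirming that ``$\widetilde{Y}$ is not a rational homology sphere'' and ``$\lk_{\widetilde{Y}}(L_1,L_2)=0$'' are each cut out by a single equation requires tracking how the rational longitudes of $L_1,L_2$ shift after the earlier surgeries, and care is needed to be sure these equations are not identically satisfied (which, given nonzero pairwise linking and nondegeneracy of $x_X$, they are not). More delicate is verifying that the union over the $(n-2)$-dimensional family of surgeries of the finite exceptional sets of Theorem~\ref{2compthm} really has dimension $n-2$ and not more: the cardinality bounds from Proposition~\ref{g0prop} depend on the surgered manifold and need not be uniform, so I expect to argue instead from the constructibility of these sets in the surgery parameters (the norm polytopes, boundary slopes, and linking data all vary semialgebraically), which makes a fiberwise-finite set over a base of dimension $n-2$ again of dimension $n-2$.
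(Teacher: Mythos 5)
Your proposal is correct and reaches the conclusion with the same essential ingredients as the paper---Theorem~\ref{2compthm} as the engine, exclusion of corners so that the intermediate capped surface stays norm-minimizing, exclusion of codimension-one loci guaranteeing a rational homology sphere with nonzero linking number, and a union of fiberwise-finite exceptional sets---but it is organized genuinely differently. The paper proves Theorem~\ref{ncompthm} by induction on $n$: it surgers only $L_n$, excludes the slope-$0$ classes on $P_n$ (to stay in a rational homology sphere) and the slopes $\lk_{ij}/(\lk_{in}\lk_{jn})$ (to keep pairwise nonzero linking numbers), and lifts the $(n-3)$-dimensional exceptional set $F$ supplied by the inductive hypothesis on $X_q$ over each filling slope $q$, so the $(n-2)$-dimensional set $E$ is assembled recursively. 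You instead fill $P_3,\ldots,P_n$ in a single step, invoke Theorem~\ref{fillonethm2} with $k=n-2$ to see that $\widetilde{S}$ is properly norm-minimizing in $\widetilde{X}$, and apply Theorem~\ref{2compthm} exactly once. Your flat version buys something real: the only Thurston norms you must control are those of $X$ (nondegenerate by hypothesis) and of $\widetilde{X}$ (whose degeneracy is the stated alternative), whereas the paper's induction tacitly requires each intermediate complement $X_q$ to satisfy the hypotheses of the inductive statement---nondegenerate norm, rational homology sphere, pairwise nonzero linking among \emph{all} remaining components---and degeneracy of an \emph{intermediate} complement is not covered by the alternative in the theorem, a point the published proof glosses over and your route avoids entirely. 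What the induction buys in exchange is simpler per-step exclusions (one slope condition per bad event), versus your determinant-of-the-surgery-linking-matrix locus, where you must check the determinant is not identically zero (the cleanest reason: meridional filling returns $Y$). Finally, note that your worry about non-uniform fiberwise bounds applies equally to the paper, whose union $\bigcup_q E_q$ of $(n-3)$-dimensional sets over the slope parameter $q$ is asserted to be $(n-2)$-dimensional with no more justification than you give; your constructibility remark is a reasonable way to make either version precise.
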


In Section~\ref{sec:ncomp}, we will prove Theorem~\ref{ncompthm} by induction. Theorem~\ref{2compthm} will be the base case of this induction.

\section{Sutured manifolds and foliations}\label{sec:sutured}

In this section we give some necessary background on sutured manifolds, introduced by Dave Gabai and explored in many papers including~\cite{ft3m1},\cite{ft3m2}, and \cite{ft3m3}. We also give some background on foliations; some excellent further resources are by Calegari~\cite{calegari} and Novikov~\cite{novikov}.

  In the author's mind, the language of sutured manifolds and foliations are often interchangeable. The main arguments of this paper could likely be rewritten completely in terms of foliations while avoiding sutured manifolds entirely or vice versa, but would be much more cumbersome.

  \subsection{Definitions and important theorems}

\begin{definition}[{\cite[Definition 2.6]{ft3m1}}]
A sutured manifold $(M,\gamma)$ is a compact, oriented $3$-manifold $M$ together with a set $\gamma$ of pairwise disjoint annuli $A(\gamma)$ and tori $T(\gamma)$ in $\boundary M$. We may abuse notation and view $\gamma$ as a subset of $\boundary M$. One oriented core of each element of $A(\gamma)$ is called a {\emph{suture}}. The set of all such sutures is denoted by $s(\gamma)$.

Every component of $R(\gamma):=\boundary M\setminus\int{\gamma}$ is oriented. These orientations are taken to be coherent with respect to $s(\gamma)$, so if a component $R$ of $R(\gamma)$ has one boundary component in some $A\in A(\gamma)$, then the induced orientation on $\boundary R$ must agree with the orientation of the suture in $A$.
 Let $R_+(\gamma)\subset R(\gamma)$ include all components whose whose normal vectors point out of $M$. Let $R_-(\gamma)=R(\gamma)\setminus R_+(\gamma)$ include all components of $R(\gamma)$ whose normal vectors point into $M$.
\end{definition}

We will be primarily interested in the case that $(M,\gamma)$ is the {\emph{complementary sutured manifold}} to a surface.

\begin{definition}
Let $S$ be an oriented surface properly embedded in a compact $3$-manifold $X$ whose boundary is a (possible empty) union of tori.  Let $V=S\times I$, where the $I$ direction is chosen so that for each boundary component $C$ of $S$, $C\times I\subset V$ is contained in $\boundary X$. Orient the $I$ direction of $S\times I$ so that the normal vector to $S\times 1$ points into $V$ and the normal vector to $S\times0$ points out of $V$.

The {\emph{complementary sutured manifold}} $(M,\gamma)$ to $S$ is defined by $M=\overline{X\setminus V}$, and the elements of $\gamma$ are the components of $\overline{(\boundary X)\setminus V}$. The orientations of each suture are chosen so that $R_+(\gamma)=S\times 1$ and $R_-(\gamma)=S\times0$ (hence our choice of orientation of the $I$ direction of $S\times I$).

Note $|A(\gamma)|=|\boundary S|$. Moreover, if $\boundary S$ meets every component of $\boundary X$, then $T(\gamma)=\emptyset$. 
\end{definition}

See Figure~\ref{fig:complementarymanifold} for an example of a complementary sutured manifold to a Seifert surface in a knot complement.

\begin{figure}
\begin{centering}
\labellist
\small\hair 2pt
\pinlabel $R_+(\gamma)$ at 260 55
\pinlabel $R_-(\gamma)$ at 250 115
\endlabellist
\includegraphics[width=80mm]{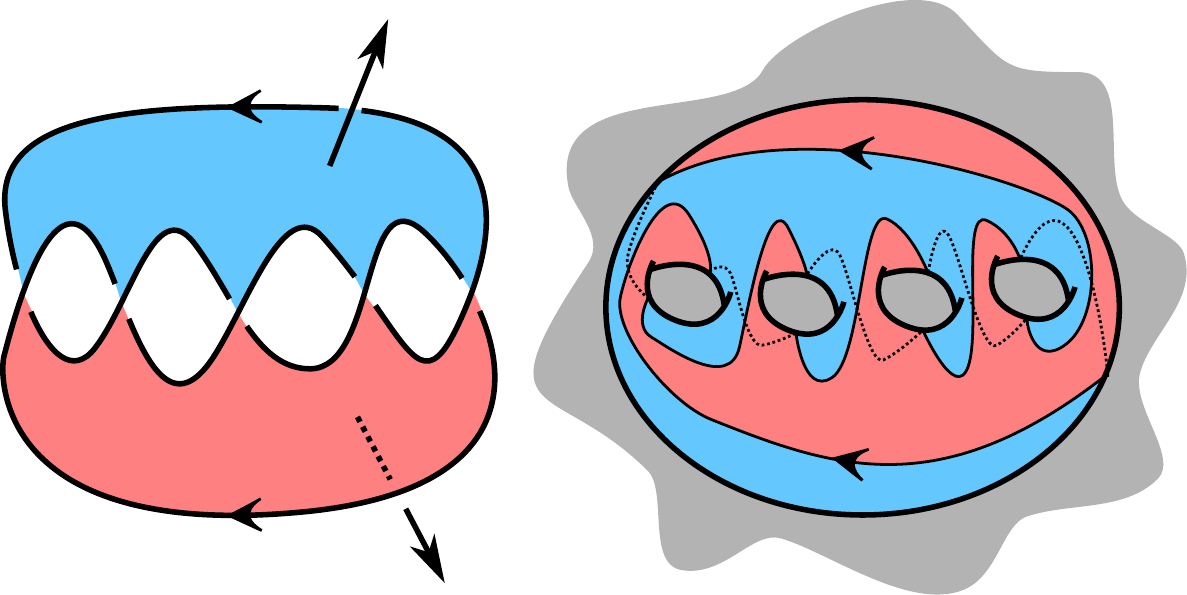}
\caption{{\bf{Left:}} a Seifert surface $S$ in $S^3\setminus\nu(K)$. {\bf{Right:}} The complementary sutured manifold $(M,\gamma)$. In the picture we draw a genus-$4$ surface $\boundary M$ in $\R^3$. The manifold $M$ is the unbounded region in $\R^3$, compactified by $\R^3\subset S^3$. The elements of $A(\gamma)$ are drawn as thin black annuli, with orientations of the sutures indicated by arrows. Note that the normal vector to $R_+(\gamma)$ points out of $M$, while the normal vector to $R_-(\gamma)$ points into $M$.}\label{fig:complementarymanifold}\end{centering}
\end{figure}

To relate sutured manifolds to the Thurston norm, we need the following key definition.

\begin{definition}\label{tautdef}
A sutured manifold $(M,\gamma)$ is {\emph{taut}} if $M$ is irreducible and each of $R_+(\gamma), R_-(\gamma)$ is norm-minimizing in $H_2(M,\gamma;\R)$.
\end{definition}

\begin{remark}\label{tautremark}
From Definition~\ref{tautdef}, if $S$ is a surface properly embedded in a compact irreducible manifold $X$ with $\boundary X$ a collection of tori, then the complementary sutured manifold $(M,\gamma)$ to $S$ is taut if and only if $S$ is norm-minimizing in $H_2(X,\boundary X;\R)$.
\end{remark}

Thus, to show a surface is norm-minimizing, it is sufficient to show that a certain sutured manifold is taut. We can show tautness using foliations.

\begin{definition}
A codimension-1 oriented {\emph{foliation}} $\F$ of a compact $3$-manifold $X$ is a collection of oriented (perhaps noncompact) connected surfaces $\{F_\alpha\}_{\alpha\in\Lambda}$ called the {\emph{leaves}} of $\F$ so that:
\begin{itemize}
\item $X=\cup_\Lambda F_\alpha$ where $F_\alpha$ is smoothly embedded in $X$ and the leaves $F_\alpha,F_\beta$ are disjoint when $\alpha\neq\beta$,
\item Every point in the interior of $X$ has a neighborhood $U$ and a coordinate map $(x_1,x_2,x_3):U\to \R^3$ so that the components of $F_\alpha\cap U$ are defined by $x_3=c$ for a constant $c$, all oriented so the positive normal vector points in the direction of $(0,0,1)$,
\item If a component $T$ of $\boundary X$ is not a leaf of $\F$, then every point in $T$ has a neighborhood $U$ and a coordinate map $(x_1,x_2,x_3):U\to \{(x,y,z)\mid x\ge 0\}\subset\R^3$ so that the components of $F_\alpha\cap U$ are defined by $x_3=c$ for a constant $c$, all oriented so the positive normal vector points in the direction of $(0,0,1)$. 
\end{itemize}
\end{definition}

Note that a leaf of a foliation $\F$ of a compact $3$-manifold $X$ meets $\boundary X$ only transversely or is an entire boundary component of $X$. When adapting foliations to the setting of sutured manifolds, this condition changes.

\begin{definition}
A codimension-1 oriented {\emph{foliation}} $\F$ of sutured manifold $(M,\gamma)$ is a collection of oriented (perhaps noncompact) connected surfaces $\{F_\alpha\}_{\alpha\in\Lambda}$ called the {\emph{leaves}} of $\F$ so that
\begin{itemize}
\item $M=\cup_\Lambda F_\alpha$ where $F_\alpha$ is smoothly embedded in $M$ and the leaves $F_\alpha,F_\beta$ are disjoint when $\alpha\neq\beta$,
\item Every point in $M$ has a neighborhood $U$ and a coordinate map $(x_1,x_2,x_3):U\to \R^3$ so that the components of $F_\alpha\cap U$ are defined by $x_3=c$ for a constant $c$, all oriented so the positive normal vector points in the direction of $(0,0,1)$,
\item Every point in the interior of $A(\gamma)$ or a component of $T(\gamma)$ which is not a leaf of $\F$ has a neighborhood $U$ and a coordinate map $(x_1,x_2,x_3):U\to \{(x,y,z)\mid x\ge 0\}\subset\R^3$ so that the components of $F_\alpha\cap U$ are defined by $x_3=c$ for a constant $c$, all oriented so the positive normal vector points in the direction of $(0,0,1)$,
\item Each component of $R(\gamma)$ is a leaf of $\F$ (with the same orientation as $R(\gamma)$).
\end{itemize}
\end{definition}

Thus, the leaves of a foliation $\F$ on a sutured $3$-manifold meet $A(\gamma)$ and perhaps some elements of $T(\gamma)$ transversely. This part of $\boundary M$ is often called the {\emph{vertical}} boundary of $M$, as we imagine the leaves of $\F$ as being horizontal. In contrast, $R_+(\gamma)$ and $R_-(\gamma)$ (and perhaps some elements of $T(\gamma)$) are leaves of $\F$. This part of $\boundary M$ is often called the {\emph{horizontal}} boundary of $M$.

Foliations have been studied in more general settings; see e.g.~\cite{lawson} for more exposition (in non-sutured manifolds). From now on we will always use the term foliation to refer to a codimension-1 oriented foliation of a compact $3$-manifold or sutured manifold -- the context will always be clear.

\begin{definition}\label{tautfoldef}
A foliation $\F$ on a compact $3$-manifold $M$ or a sutured manifold $(M,\gamma)$ is said to be {\emph{taut}} if there exists a circle or properly embedded path $\eta$ in $M$ which intersects every leaf of $\F$ and always intersects leaves of $\F$ transversely.
\end{definition}

Definition~\ref{tautfoldef} allows one to relate foliations and the Thurston norm.

\begin{theorem}[{\cite[Corollary 2]{thurston}}]\label{studyfoliation}
Let $\F$ be a taut foliation on a compact $3$-manifold $M$. Suppose $L$ is a leaf of $\F$ which is a compact surface. Then $L$ is norm-minimizing in the class $[L]\in H_2(M,\boundary M;\R)$.
\end{theorem}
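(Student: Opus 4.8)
The plan is to derive everything from the single inequality
\[
|\langle e(T\F),\alpha\rangle|\le x_M(\alpha)\qquad\text{for all }\alpha\in H_2(M,\boundary M;\R),
\]
where $e(T\F)$ denotes the (relative) Euler class of the oriented $2$-plane field tangent to $\F$. Granting this, the theorem follows quickly. Along a compact leaf $L$ the plane field restricts to the tangent bundle of $L$ itself, so $\langle e(T\F),[L]\rangle=\chi(L)$. Tautness of $\F$ forbids sphere and disk leaves (a closed transversal as in Definition~\ref{tautfoldef} cannot meet a sphere leaf transversely in an irreducible piece), so every component of $L$ has nonpositive Euler characteristic and $|\chi(L)|=\chi^+(L)$. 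The displayed inequality then gives $x_M([L])\ge\chi^+(L)$, while $x_M([L])\le\chi^+(L)$ holds automatically because $L$ represents the class $[L]$. Hence $\chi^+(L)=x_M([L])$, which is exactly the assertion that $L$ is norm-minimizing.

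\textbf{Setting up and computing the Euler class.} First I would make $e(T\F)$ into a genuine element pairing with relative classes: the oriented plane field $T\F$ carries a canonical trivialization along $\boundary M$ coming from the boundary behaviour of the foliation (leaves either are boundary components or meet $\boundary M$ transversely, and $\boundary M$ is a union of tori), so one obtains a relative Euler class that pairs with $[L]\in H_2(M,\boundary M;\R)$ via Poincar\'e--Lefschetz duality. To evaluate on a general class $\alpha$, choose a properly embedded oriented surface $S$ with $[S]=\alpha$ realizing $x_M(\alpha)=\chi^+(S)$, and after a small isotopy put $S$ in general position with respect to $\F$, so that the induced singular $1$-dimensional foliation $S\cap\F$ has only finitely many isolated tangencies. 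Poincar\'e--Hopf for this singular foliation gives $\chi(S)=\sum_p\mathrm{ind}(p)$, while the Euler number $\langle e(T\F),[S]\rangle$ is computed by the same tangency points, now weighted by the sign $\epsilon(p)\in\{\pm1\}$ recording whether the positive normals of $S$ and $\F$ agree at $p$.

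\textbf{The main obstacle: controlling the tangencies via tautness.} The crux is a Roussarie--Thurston position argument. I would isotope $S$ so that it is either a leaf or is transverse to $\F$ away from saddle tangencies of a single consistent sign; this is precisely the step where tautness is indispensable, since a center tangency (index $+1$) or a cancelling pair of oppositely signed saddles can be removed by an isotopy across a disk, and tautness --- equivalently, the absence of Reeb components via Novikov's theorem --- rules out the disk-type obstruction to performing these cancellations on a norm-minimizing surface. Once all tangencies are saddles of one sign, both $\chi(S)$ and $\langle e(T\F),[S]\rangle$ equal $\pm(\#\,\text{saddles})$, whence $|\langle e(T\F),[S]\rangle|=-\chi(S)=\chi^+(S)=x_M(\alpha)$, which establishes the inequality above and completes the proof. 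I expect this position-and-cancellation step to be the only genuinely delicate part; the index bookkeeping and the final comparison are routine once it is in place.
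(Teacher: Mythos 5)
A preliminary remark on the comparison: the paper offers no proof of this statement at all---it is imported with a citation from Thurston's memoir---and the argument you sketch (a relative Euler-class inequality plus the evaluation $\langle e(T\F),[L]\rangle=\chi(L)$ on a compact leaf) is exactly the argument of that cited source, so in spirit your route is the intended one. But as written it contains a genuine error at its base. The claim that tautness forbids sphere and disk leaves is false: the product foliation of $S^2\times S^1$ by spheres and the foliation of the solid torus $D^2\times S^1$ by meridian disks are both taut in the sense of Definition~\ref{tautfoldef} (the circle $\{\pt\}\times S^1$, respectively the core circle, is a closed transversal meeting every leaf), and every leaf is a sphere, respectively a disk; your parenthetical hedge ``in an irreducible piece'' does not help, since $M$ is not assumed irreducible and the sphere-leaf case is precisely the reducible one. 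The same examples show that the inequality $|\langle e(T\F),\alpha\rangle|\le x_M(\alpha)$ from which you propose to ``derive everything'' is itself false without a further hypothesis: evaluated on the sphere leaf it reads $2\le 0$. The repair is standard and must be made explicit: a leaf with $\chi(L)\ge 0$ is trivially norm-minimizing, because $0\le x_M([L])\le\chi^+(L)=0$; and by Reeb stability a taut foliation with a sphere or disk leaf has only such leaves, so the Euler-class inequality need only be proved (and is only true) for foliations with no sphere or disk leaves, which is exactly the case relevant to leaves with $\chi(L)<0$.

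The endgame also overclaims. The general-position argument can be used to eliminate \emph{centers}, so that all tangencies of $S$ with $\F$ are saddles; it cannot arrange that the saddles all carry the same coorientation sign $\epsilon(p)$, and you do not need it to. With all tangencies saddles, Poincar\'e--Hopf gives $\chi(S)=-\#\{\text{saddles}\}$ while $\langle e(T\F),[S]\rangle=-\sum_p\epsilon(p)$, so the triangle inequality yields $|\langle e(T\F),[S]\rangle|\le-\chi(S)=\chi^+(S)=x_M(\alpha)$, which is all that is required. Your stronger conclusion $|\langle e(T\F),[S]\rangle|=x_M(\alpha)$ for every $\alpha$ is false in general: it would say $x_M$ is the absolute value of a linear functional, which is degenerate whenever $b_2(M)\ge 2$, contradicting, e.g., any of the nondegenerate norms appearing in this paper; a single-sign saddle pattern would force exactly this equality, so it cannot be achieved for arbitrary $\alpha$. (Relatedly, the Euler number is computed as $\sum_p\epsilon(p)\,\mathrm{ind}(p)$, not $\sum_p\epsilon(p)$; the two agree only in all-saddle position.) Finally, the center-elimination step you flag as ``the only genuinely delicate part'' is in fact the entire content of Thurston's theorem on the dual norm of $e(T\F)$---it requires Novikov's theorem and a careful cancellation scheme, and occupies most of the proof in the source---so as it stands your proposal reduces the quoted corollary to the quoted theorem rather than proving it.
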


Gabai proved the converse to Theorem~\ref{studyfoliation} in the following setting.

\begin{theorem}[{\cite[Theorem 5.5]{ft3m1}}]\label{thm5.5}\label{havefoliation1}
Let $M$ be a compact connected irreducible oriented $3$-manifold $M$ whose boundary is a (possibly empty) union of tori. Let $S$ be a properly norm-minimizing surface representing a nontrivial class of $H_2(M,\boundary M;\R)$. Then $S$ is a leaf of a taut foliation $\F$ of $M$ which has no Reeb components in $\boundary M$.
\end{theorem}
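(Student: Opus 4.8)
The plan is to translate the statement into sutured-manifold language and then build the foliation by reversing a sutured manifold hierarchy. First I would form the complementary sutured manifold $(M_1,\gamma_1)$ to $S$. By Remark~\ref{tautremark}, since $M$ is irreducible and $S$ is properly norm-minimizing, $(M_1,\gamma_1)$ is taut, and by construction $R_+(\gamma_1)$ and $R_-(\gamma_1)$ are the two copies of $S$. If I can produce a taut foliation $\F_1$ of $(M_1,\gamma_1)$ having $R_\pm(\gamma_1)$ as leaves and no Reeb components along the toral part of $\boundary M_1$, then regluing $V=S\times I$ (foliated by the horizontal slices $S\times\{t\}$) recovers $M$ together with a taut foliation in which $S$ is a leaf; a transversal certifying tautness of $\F_1$ extends through $V$, and the boundary Reeb-freeness is preserved under the regluing. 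So the theorem reduces to constructing such an $\F_1$.

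The heart of the argument is the existence of a \emph{sutured manifold hierarchy}
\[(M_1,\gamma_1)\stackrel{S_1}{\rightsquigarrow}(M_2,\gamma_2)\stackrel{S_2}{\rightsquigarrow}\cdots\stackrel{S_k}{\rightsquigarrow}(M_{k+1},\gamma_{k+1}),\]
in which each $(M_{i+1},\gamma_{i+1})$ is obtained by decomposing $(M_i,\gamma_i)$ along a properly embedded decomposing surface $S_i$, every stage $(M_i,\gamma_i)$ remains taut, and the terminal $(M_{k+1},\gamma_{k+1})$ is a disjoint union of product sutured manifolds $R\times I$ with $\gamma=\boundary R\times I$. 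The existence of such a hierarchy for any taut sutured manifold is the main technical input. The decomposing surfaces must be chosen to be \emph{well-groomed}, meaning in particular that on each component of $R_\pm(\gamma_i)$ their boundary curves form a parallel coherently oriented family (or are empty), so that tautness is preserved at each stage and the toral boundary stays controlled. This is the step I expect to be by far the most delicate, and it is exactly where the hypothesis that $S$ is \emph{properly} norm-minimizing (coherently oriented boundary on each torus) gets used, ensuring the toral boundary never acquires incoherently oriented sutures as the decomposition proceeds.

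Given the hierarchy, I would build the foliation by induction from the bottom up. The terminal product pieces carry the obvious product foliation by slices $R\times\{t\}$, which is taut (take $\eta=\pt\times I$) and tangent to $R_\pm$. The inductive step reverses a single well-groomed decomposition: if $(M_{i+1},\gamma_{i+1})$ carries a taut foliation tangent to its horizontal boundary and Reeb-free on its toral boundary, then so does $(M_i,\gamma_i)$, with $S_i$ realized as a union of leaves. Concretely one reglues the two copies of $S_i$ appearing in $R_\pm(\gamma_{i+1})$ and \emph{spins} the nearby leaves so that they limit onto $S_i$, interpolating between the foliations on the two sides; Gabai's gluing and filling operations guarantee this can be arranged while retaining a transversal and keeping $R_\pm(\gamma_i)$ as leaves. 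Iterating up the hierarchy produces the desired $\F_1$ on $(M_1,\gamma_1)$.

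The last point requiring care is the ``no Reeb components in $\boundary M$'' clause. A Reeb annulus on a boundary torus would be forced if, during the spinning, the boundary curves of some $S_i$ cut off a product region with the wrong orientation pattern; choosing each $S_i$ well-groomed with coherently oriented boundary (inherited ultimately from $\boundary S$ being coherently oriented on each torus) prevents this, so every boundary torus is foliated Reeb-free. I would verify this boundary bookkeeping stage by stage. The principal obstacle is thus not the gluing itself, which is local and formal once the setup is in place, but establishing the hierarchy with all of the grooming and boundary-orientation hypotheses simultaneously satisfied; this is the technical core supplied by Gabai's machinery.
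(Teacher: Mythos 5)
The paper does not prove this theorem: it is quoted directly from Gabai's \emph{Foliations and the topology of $3$-manifolds} (cited in the statement as Theorem 5.5 of that work), so there is no internal proof to compare against. Your proposal is a faithful outline of Gabai's original argument: pass to the complementary sutured manifold to $S$, invoke the existence of a well-groomed sutured manifold hierarchy terminating in product sutured manifolds, and build the taut foliation inductively from the product pieces back up the hierarchy, regluing $S\times I$ at the end to realize $S$ as a leaf, with well-groomedness of the decomposing surfaces controlling Reeb behavior on the toral boundary. The two steps you flag as the technical core are precisely Gabai's Theorem 4.2 (existence of well-groomed hierarchies for taut sutured manifolds) and Theorem 5.1 (constructing taut foliations from a hierarchy); your sketch invokes these rather than reproving them, which is appropriate, since this is exactly the machinery the present paper also takes as a black box.
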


We did not previously define Reeb components. For our purposes, we can use the following nonstandard definition.

\begin{definition}
Let $M$ be a compact connected irreducible oriented $3$-manifold $M$ whose boundary is a (possibly empty) union of tori $P_1\sqcup\cdots\sqcup P_n$. Let $S$ be a properly norm-minimizing surface in $M$. Suppose $S$ is a leaf of a foliation $\F$. We say $\F$ has no Reeb components in $P_i$ if any curve on $P_i$ not of the same slope as $\boundary S\cap P_i$ can be perturbed to be transverse to $\F|_{P_i}$. If $\boundary M=\emptyset$ or if $\F$ has no Reeb components in any $P_i$ for $i=1,\ldots, n$, then we say $\F$ has no Reeb components in $\boundary M$.
\end{definition}

The general strategy in the proof of Theorem~\ref{2compthm} is to construct a taut foliation $\G$ on $\widehat{X}$ achieving $\widehat{S}$ as a compact leaf. We may then conclude by Theorem~\ref{studyfoliation} that $\widehat{S}$ is norm-minimizing. To construct this taut foliation, we begin with a taut foliation $\F=\{F_\alpha\}$ on $X$ achieving $S$ as a leaf. The existence of $\F$ is ensured by Theorem~\ref{havefoliation1}. (Note here that we actually need $X=Y\setminus\nu(L)$ to be irreducible to apply Theorem~\ref{havefoliation1} -- but if $X$ decomposes as a connect-sum, then we will apply Theorem~\ref{havefoliation1} to a prime summand of $X$.) If it happens to be the case that each $F_\alpha|_{\boundary X}$ consists of compact circles, then we may Dehn-fill $\boundary X$ and cap off each component of $F_\alpha|_{\boundary X}$ with a disk to construct a taut foliation on $\widehat{X}$. However, in general we cannot hope that $F_\alpha|_{\boundary X}$ is compact, so we must first attempt to alter $\F$. In Section~\ref{sec:operations}, we describe tools introduced by Gabai~\cite{suspensions} to change a taut foliation $\F$.

\subsection{Sutured manifold and foliation operations}\label{sec:operations}

The most basic operations one can perform on a sutured manifold $(M,\gamma)$ (and the only ones needed in this paper) are  {\emph{product-disk decomposition}} and {\emph{product-annulus decomposition}}.

\begin{definition}[\cite{generaoflinks}]
Let $(M,\gamma)$ be a sutured manifold.

A {\emph{product disk}} in $(M,\gamma)$ is a disk $D$ properly embedded in $M$ so that for some coordinates $D=I\times I$, we have $I\times\boundary I\subset\gamma$, $1\times I\subset R_+(\gamma)$, and $0\times I\subset R_-(\gamma)$.

A {\emph{product annulus}} in $(M,\gamma)$ is an annulus $A$ properly embedded in $M$ so that $\boundary A$ has two components $\boundary_+ A\subset R_+(\gamma)$ and $\boundary_- A\subset R_-(\gamma)$.

\end{definition}

We draw a schematic of a product disk in Figure~\ref{fig:productdisk}.

\begin{figure}
\labellist
\small\hair 2pt
\pinlabel $R_+(\gamma)$ at 160 220
\pinlabel $R_-(\gamma)$ at 140 -20
\pinlabel $D$ at 150 100
\endlabellist
\begin{centering}
\includegraphics[width=100mm]{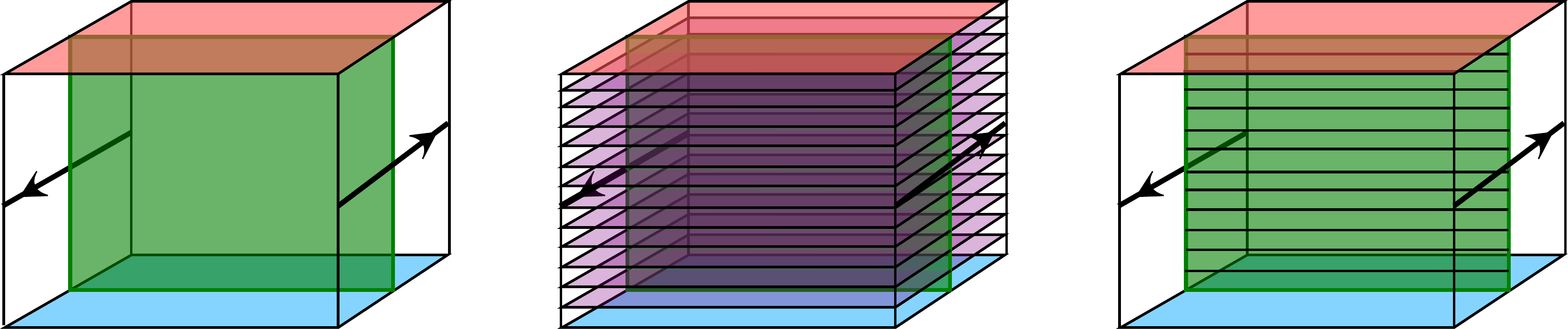}
\caption{{\bf{Left:}} a product disk $D$ in a sutured manifold $(M,\gamma)$. {\bf{Middle:}} part of a taut foliation $\F$ on $(M,\gamma)$. {\bf{Right:}} Up to perturbation of $D$, $\F|_D$ is a product foliation.}\label{fig:productdisk}\end{centering}
\end{figure}

\begin{remark}\label{transverseremark}
Let $\F$ be a taut foliation on a sutured manifold $(M,\gamma)$ containing a product disk or annulus $\Delta$. By work of Thurston~\cite{thurston} and Roussarie~\cite{roussarie}, $\Delta$ can be perturbed so that $\int{\Delta}$ intersects the leaves of $\F$ transversely.

Given a sutured manifold with taut foliation $\F$, we will always assume that product disks and annuli are transverse to $\F$.
\end{remark}

\begin{definition}[{\cite[Definition 3.6]{generaoflinks},\cite[Definition 3.1]{ft3m1}}]
Let $\Delta$ be a product disk or annulus inside a sutured manifold $(M,\gamma)$. Let $V=\Delta\times I$, where the $I$  direction is chosen so that $(\boundary \Delta)\times I\subset \boundary M$. {\emph{Sutured manifold decomposition}} (or more simply, {\emph{decomposition}}) of $(M,\gamma)$ along $\Delta$ yields the sutured manifold $(M',\gamma')$, where $M'=\overline{M\setminus V}$. If $\Delta$ is a disk, then $\gamma'$ is obtained from $\gamma$ by surgering the element(s) of $A(\gamma)$ which meet $\Delta$ along $\Delta$ (see Figure~\ref{fig:diskdecomp} or Figure~\ref{fig:diskdecomplemma}). If $\Delta$ is an annulus, then $\gamma'$ is a superset of $\gamma$ but also includes the two annuli $\Delta\times 0,\Delta\times 1$. The sutures in these annuli are oriented consistently with the orientations on $R(\gamma)$.

We write $(M,\gamma)\xrightarrow\Delta(M',\gamma')$, or say $(M',\gamma')$ is the result of (product) disk/annulus decomposition of $(M,\gamma)$ along $\Delta$. We draw an example of product disk decomposition in Figure~\ref{fig:diskdecomp}.
\end{definition}

\begin{figure}
\labellist
\small\hair 2pt
\pinlabel $R_+(\gamma)$ at 75 30
\pinlabel $R_-(\gamma)$ at 60 92
\pinlabel $R_+(\gamma')$ at 255 30
\pinlabel $R_-(\gamma')$ at 240 92
\endlabellist
\begin{centering}
\includegraphics[width=90mm]{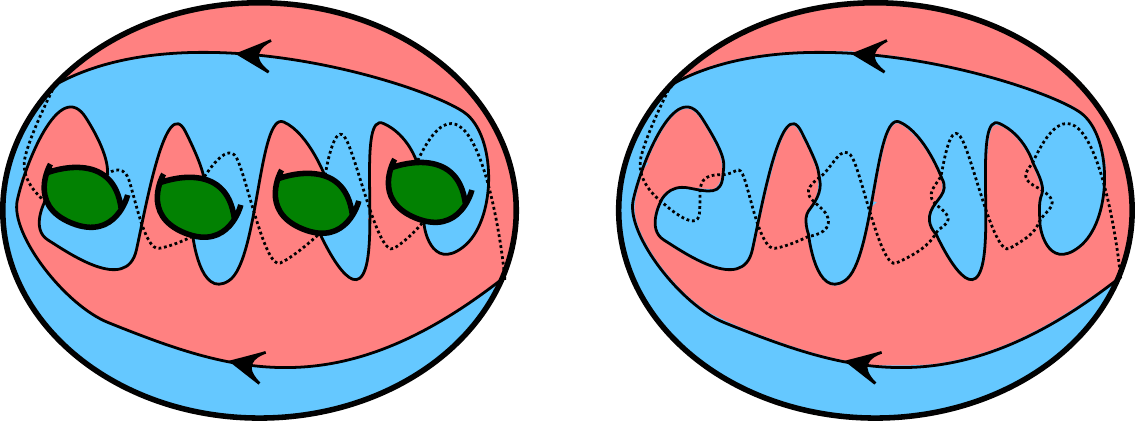}
\caption{{\bf{Left:}} a sutured manifold $(M,\gamma)$. In the picture we draw a genus-$4$ surface $\boundary M$ in $\R^3$. The manifold $M$ is the unbounded region in $\R^3$, compactified by $\R^3\subset S^3$. The elements of $A(\gamma)$ are drawn as thin black annuli, with orientations of the sutures indicated by arrows. The sutured manifold $(M,\gamma)$ is the complementary manifold to a Seifert surface $S$ in $S^3$, as shown in Figure~\ref{fig:complementarymanifold}. We shade four product disks in $(M,\gamma)$. {\bf{Right:}} We decompose $(M,\gamma)$ along each of the four product disks. The result is a sutured manifold $(M',\gamma')$ with $M'\cong B^3$ and $\gamma'$ a single annulus. This sutured manifold is taut, so we conclude by Theorem~\ref{tautdecomp} that $(M,\gamma)$ is taut. This implies that $S$ is a norm-minimizing  surface.}\label{fig:diskdecomp}\end{centering}
\end{figure}

\begin{remark}\label{tautdecompfol}
Let $\F$ be a taut foliation on sutured manifold $(M,\gamma)$. Let $\Delta$ be a product disk or annulus in $(M,\gamma)$. By Remark~\ref{transverseremark}, $\F':=\F|_{M'}$ is a foliation on $(M',\gamma')$, where $(M,\gamma)\xrightarrow{\Delta}(M',\gamma')$. In fact, $\F'$ is taut. (If $M'=M'_1\sqcup M'_2$ is disconnected, then we mean $\F'|_{M'_1}$ and $\F'_{M'_2}$ are both taut.)
\end{remark}

\begin{lemma}[{\cite[Lemma 3.12]{ft3m1}}]\label{tautdecomp}
Let $(M,\gamma)\xrightarrow{\Delta}(M',\gamma')$ be a product disk or annulus decomposition. Then $(M,\gamma)$ is taut if and only if $(M',\gamma')$ is taut.
\end{lemma}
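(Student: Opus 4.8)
The plan is to prove both implications by comparing Euler characteristics across the product region $V=\Delta\times I$ and exploiting the product structure of $\Delta$ to transport surfaces back and forth between $M$ and $M'$. The first step is pure bookkeeping: if $\Delta$ is a product disk it meets $R_+(\gamma)$ in the single arc $1\times I$ and $R_-(\gamma)$ in $0\times I$, and if $\Delta$ is a product annulus it meets $R_\pm(\gamma)$ in the single circle $\boundary_\pm A$; in either case $R_\pm(\gamma')$ is obtained from $R_\pm(\gamma)$ by cutting along this arc or circle. Since cutting a surface along a properly embedded arc or an embedded circle preserves Euler characteristic, $\chi(R_\pm(\gamma'))=\chi(R_\pm(\gamma))$, and as no disk or sphere component is created we also get $\chi^+(R_\pm(\gamma'))=\chi^+(R_\pm(\gamma))$. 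I would also record that, under the gluing recovering $M$ from $M'$ by reattaching $V$ along $\Delta\times\boundary I$, the class $[R_\pm(\gamma')]\in H_2(M',\gamma';\R)$ is carried to $[R_\pm(\gamma)]\in H_2(M,\gamma;\R)$, so that the two norm-minimality questions really are about the same homology data.

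Given this, norm-minimality is the main content, and I would establish it in both directions by a cut-and-reglue principle. For ``$(M,\gamma)$ taut $\Rightarrow(M',\gamma')$ taut'': if some $R''$ represented $[R_+(\gamma')]$ with $\chi^+(R'')<\chi^+(R_+(\gamma'))$, I would glue the product band $(\Delta\cap R_+)\times I\subset V$ onto $R''$ across the two copies of $\Delta$ in $\boundary M'$ to produce a surface $\widehat R$ in $M$ with $[\widehat R]=[R_+(\gamma)]$; the band is a disk or annulus and hence regluing leaves $\chi^+$ unchanged, giving $\chi^+(\widehat R)=\chi^+(R'')<\chi^+(R_+(\gamma))$ and contradicting tautness of $(M,\gamma)$. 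For the converse ``$(M',\gamma')$ taut $\Rightarrow(M,\gamma)$ taut'': given a competitor $R'$ for $R_+(\gamma)$ with smaller $\chi^+$, I would make $R'$ transverse to $\Delta$, remove all intersection circles and arcs by innermost-disk and outermost-arc surgeries (using irreducibility of $M'$ and the fact that $\Delta$ is a disk, resp.\ an incompressible annulus) without increasing $\chi^+$, and then restrict to $M'$ to obtain a competitor for $R_+(\gamma')$, again a contradiction. The symmetric statements for $R_-$ follow by reversing orientations.

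It remains to handle irreducibility, which I expect to be the main obstacle, particularly in the direction ``$(M',\gamma')$ taut $\Rightarrow M$ irreducible.'' Cutting an irreducible manifold along a disk or a properly embedded annulus leaves it irreducible (an essential sphere can be made disjoint from the cut by innermost-disk surgery and then bounds a ball on the appropriate side), which gives $M$ irreducible $\Rightarrow M'$ irreducible. The reverse is delicate because reattaching $V$ is topologically a $1$-handle attachment (disk case), which could a priori create a reducing sphere; here I would use crucially that $\Delta$ is a \emph{product} disk/annulus, so its frame arc or circle runs from $R_+(\gamma)$ to $R_-(\gamma)$, to show that any essential sphere in $M$ may be isotoped off $\Delta$ and therefore lies in the irreducible piece $M'$. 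Finally I would verify that none of the surgery or regluing steps introduces a nullhomologous subset of components, so that ``$\chi^+$ equals the norm'' remains genuinely equivalent to being norm-minimizing throughout. One direction can alternatively be phrased through Remark~\ref{tautdecompfol} (a taut foliation on $(M,\gamma)$ restricts to a taut foliation on $(M',\gamma')$), but I would avoid invoking Theorem~\ref{havefoliation1} to convert tautness into foliations, since its proof relies on this very lemma.
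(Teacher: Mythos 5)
Before anything else, note that the paper contains no proof of this lemma: it is imported verbatim from Gabai \cite[Lemma 3.12]{ft3m1}, so your proposal has to be judged as a reconstruction of Gabai's argument rather than compared against an in-paper one. Judged that way, it has one repairable error and one structural gap. The repairable error is the Euler-characteristic bookkeeping in the disk case: cutting a surface along a properly embedded arc does \emph{not} preserve $\chi$, it raises it by $1$ (a disk cut along a diameter is two disks), so $\chi^+(R_\pm(\gamma'))=\chi^+(R_\pm(\gamma))-1$ rather than equality; dually, regluing the band $(\Delta\cap R_+)\times I$ is a band attachment along two arcs and lowers $\chi$ by $1$, rather than ``leaving $\chi^+$ unchanged.'' The two errors cancel, so the inequality $\chi^+(\widehat R)<\chi^+(R_+(\gamma))$ you want survives correction, but as written the arithmetic on which both directions rest is false. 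In the same step there is a further omission: a competitor $R''$ is only \emph{homologous} to $R_+(\gamma')$, so its boundary need not contain the arcs on the two copies of $\Delta$ along which you attach the band; one must first normalize $\partial R''$ inside the annuli of $\gamma'$, and justify (e.g.\ by excision) that the classes in $H_2(M',\gamma')$ and $H_2(M,\gamma)$ correspond. These points are standard but cannot simply be asserted.

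The structural gap is the annulus case. Both your intersection-removal step and your plan for ``$M'$ irreducible $\Rightarrow M$ irreducible'' invoke ``the fact that $\Delta$ is a disk, resp.\ an incompressible annulus,'' but incompressibility is nowhere granted: the paper's definition of a product annulus requires only $\partial_\pm A\subset R_\pm(\gamma)$, and the paper applies Lemma~\ref{tautdecomp} (Case 3(ii) in the proof of Theorem~\ref{mingenuscor}) precisely to product annuli that cap off to $2$-spheres, i.e.\ to compressible ones. For compressible $\Delta$, circles of intersection that are essential in $\Delta$ cannot be removed by innermost-disk surgery on $\Delta$, and a reducing sphere cannot in general be isotoped off $\Delta$; this is exactly where the content of the lemma lives. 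Indeed, in the stated generality the ``if'' direction is genuinely false: decomposing $(S^2\times I,\emptyset)$ along the product annulus $c\times I$, with $c$ an equator of $S^2$, yields two balls each carrying a single suture, and each of these is taut, while $(S^2\times I,\emptyset)$ is not taut because $S^2\times I$ is not irreducible. So a correct proof must engage with the hypotheses Gabai builds into his notion of a decomposing surface (for instance, that no boundary component of the decomposing surface bounds a disk in $R(\gamma)$), and your sketch never recovers them. Finally, even in the disk case, the direction ``$M'$ irreducible $\Rightarrow M=M'\cup(1\text{-handle})$ irreducible'' is the classical nested-spheres induction: after surgering a reducing sphere along an innermost disk of $\Delta$, the two resulting spheres may bound nested balls joined by a knotted tube, and the remark that $\Delta$ is a product disk does not dispose of that case.
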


\begin{remark}
By Remark~\ref{transverseremark}, if $(M,\gamma)$ admits a taut foliation and $(M,\gamma)\xrightarrow{\Delta}(M',\gamma')$ is a product disk or annulus decomposition, then $(M',\gamma')$ admits a taut foliation.
\end{remark}

From Lemma~\ref{tautdecomp}, we obtain the key fact in the proof of Theorems~\ref{fillonethm} and~\ref{fillonethm2}.

\begin{lemma}[Gabai]\label{lemmadiskdecomp}
Let $(M,\gamma)$ be a sutured manifold. Suppose a product disk $D$ connects two distinct elements $A_1$ and $A_2$ of $A(\gamma)$. Let $(M_1,\gamma_1)$ be the sutured manifold obtained from $(M,\gamma)$ by surgering $M$ at $A_1$ (i.e. attaching a $3$-dimensional $2$-handle to $M$ along $A_1$) and then forgetting the suture $A_1$ (i.e. $M_1=M\cup_{A_1}E\times I$ for a disk $E$ with boundary the core of $A_1$, $\gamma_1=\gamma\setminus A_1$). Let $(M_2,\gamma_2)$ be the sutured manifold obtained from $(M,\gamma)$ by product-disk decomposition at $D$. Then $(M_1,\gamma_1)\cong (M_2,\gamma_2)$.

\end{lemma}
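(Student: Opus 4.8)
The plan is to recognize both operations as handle attachments to a common manifold and then invoke a handle cancellation. First I would observe that the product-disk decomposition $(M,\gamma)\xrightarrow{D}(M_2,\gamma_2)$ is, on the level of underlying manifolds, the removal of a tubular neighborhood $\nu(D)\cong D\times[-1,1]$, where the four sides of this neighborhood lie on $A_1,R_+(\gamma),A_2,R_-(\gamma)$ respectively (this is exactly the product-disk condition) and the two faces $D\times\{\pm1\}$ become new boundary disks of $M_2$. Re-gluing $\nu(D)$ recovers $M$; since $\nu(D)$ is a ball attached to $M_2$ along the two disjoint disks $D\times\{\pm1\}$, this re-gluing is precisely the attachment of a three-dimensional $1$-handle $h^{(1)}$ whose cocore is $D$. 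Thus $M=M_2\cup h^{(1)}$, and consequently $M_1=M\cup_{A_1}(E\times I)=M_2\cup h^{(1)}\cup h^{(2)}$, where $h^{(2)}=E\times I$ is the $2$-handle attached along $A_1$.

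Second, I would check that $h^{(1)}$ and $h^{(2)}$ form a cancelling pair. The belt circle of the $1$-handle $h^{(1)}$ is isotopic to $\partial D$, and the attaching circle of the $2$-handle $h^{(2)}$ is the core $s_1$ of $A_1$ (the suture). Because $D$ connects the two \emph{distinct} annuli $A_1$ and $A_2$, the arc $D\cap A_1$ is an essential arc in the annulus $A_1$ running from the $R_-$-boundary to the $R_+$-boundary, so it meets the core $s_1$ transversely in exactly one point; the remaining arcs of $\partial D$ lie on $R_+(\gamma),A_2,R_-(\gamma)$ and are disjoint from $s_1$. Hence the attaching circle of $h^{(2)}$ meets the belt circle of $h^{(1)}$ transversely once, and the standard handle-cancellation lemma yields a homeomorphism $M_1=M_2\cup h^{(1)}\cup h^{(2)}\xrightarrow{\cong}M_2$ that is the identity outside a ball containing the two handles.

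Third, I would upgrade this to an isomorphism of sutured manifolds by tracking the boundary decomposition through the cancellation. On $M_2$ the decomposition merges the two sutures $s_1,s_2$ into a single suture $s_{12}$ (the band sum of $s_1$ and $s_2$ along $D$), and each new disk $D\times\{\pm1\}$ is divided by $s_{12}$ into an $R_+$-part and an $R_-$-part; on $M_1$ the handle $h^{(2)}$ caps $s_1$ off by its core disk and replaces $A_1$ by two disks lying in $R_-(\gamma_1)$ and $R_+(\gamma_1)$, while the suture $s_2$ is untouched. Since $s_1$ bounds the capping disk in $M_1$, the band sum $s_{12}=s_1\#_D s_2$ is isotopic to $s_2$; I would arrange the cancellation homeomorphism to realize exactly this isotopy, so that $s_{12}\mapsto s_2$ and $R_\pm(\gamma_2)\mapsto R_\pm(\gamma_1)$ coherently with orientations. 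Together with the identification being the identity away from the handles, this shows $(M_1,\gamma_1)\cong(M_2,\gamma_2)$.

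The main obstacle is the final step: the handle-cancellation lemma only produces a homeomorphism of underlying $3$-manifolds, and the real content is verifying that it can be chosen to respect the sutured structure -- i.e. to carry the partition $\partial M_2\to R_+\cup R_-\cup A(\gamma)$ and the \emph{oriented} sutures of $M_2$ to those of $M_1$, matching the merged suture $s_{12}$ with the surviving suture $s_2$ with consistent coorientation. This is pure bookkeeping, but it is where the orientation conventions for $R_\pm(\gamma)$ and for the sutures must be checked carefully; a clearly labelled local picture of $A_1\cup D\cup A_2$ and its two modifications is likely the most transparent way to record the verification.
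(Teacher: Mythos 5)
Your proof is correct and follows essentially the same route as the paper: the paper's argument takes the ball $B=\overline{(E\times I)\cup \nu(D)}$ -- precisely the union of your cancelling $1$--$2$ handle pair -- and ``sweeps'' the disk $B\cap\partial M_1$ through it, which is exactly the handle-cancellation move you invoke, with the suture bookkeeping recorded in a figure rather than in words. Your explicit tracking of the merged suture $s_{12}=s_1\#_D s_2$ being isotopic to $s_2$ across the capping disk is the content the paper delegates to Figure~\ref{fig:diskdecomplemma}, so the two arguments differ only in packaging.
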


\begin{proof}
Let $B\subset M_1$ be given by $B:=\overline{(E\times I)\cup (I\times D)}$, so $B$ is a $3$-ball meeting $A_2$ in a disk. 
Deform $\nu(\boundary M_1)$ near $B$, sweeping this disk through $B$. 
This realizes a map $M_1\to M_2,\gamma_1\to\gamma_2$. See Figure~\ref{fig:diskdecomplemma}.

\end{proof}

\begin{figure}
\labellist
\small\hair 2pt
\pinlabel $A_1$ at 735 250
\pinlabel $A_2$ at 1075 250
\pinlabel $D$ at 910 410
\pinlabel $E$ at 735 360
\endlabellist
\begin{centering}
\includegraphics[width=100mm]{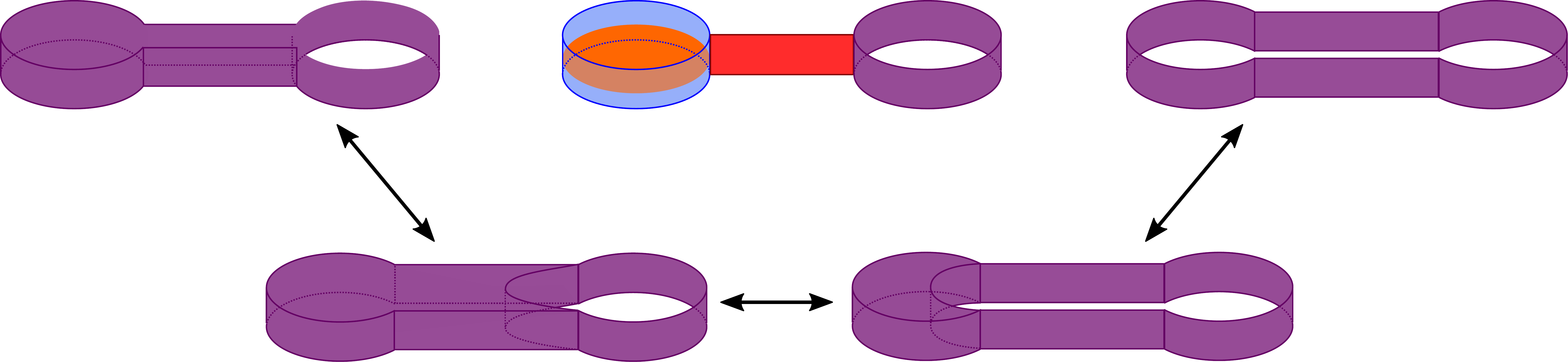}
\caption{{\bf{Top Middle:}} Two distinct elements $A_1, A_2$ of $A(\gamma)$ in a sutured manifold $(M,\gamma)$. We indicate a product disk $D$ connecting $A_1$ and $A_2$. We draw a disk $E$ with boundary the core of $A_1$ to indicate the core of a $2$-handle $E\times I$ that we might attach to $M$. {\bf{Top Left:}} The sutured manifold $(M_1,\gamma_1)$, where $M_1=M\cup(E\times I)$ and $\gamma_1=\gamma\setminus A_1$. {\bf{Top Right:}} The sutured manifold $(M_2,\gamma_2)$, which is obtained from $(M,\gamma)$ by product disk decomposition along $D$. {\bf{Bottom:}} The sutured manifolds $(M_1,\gamma_1)$ and $(M_2,\gamma_2)$ are homeomorphic.}\label{fig:diskdecomplemma}\end{centering}
\end{figure}

We are now able to prove Theorems~\ref{fillonethm} and~\ref{fillonethm2} via an argument of Gabai.

\begin{proof}[Proof of Theorems~\ref{fillonethm} and~\ref{fillonethm2}; Gabai, J. Rasmussen]

We will first prove Theorem~\ref{fillonethm}. Let $P_2=\boundary\overline{\nu(L_2)}$. 
Suppose $S'$ is a properly norm-minimizing surface with $[S]=[S']$. Then $g(S)\le g(S')$ and $|\boundary S\cap P_2|\ge |\boundary S'\cap P_2|$, so $\chi(\widehat{S})\ge\chi(\widehat{S'})$. Therefore, it would be sufficient to prove the claim for properly norm-minimizing surfaces, so assume $S$ is properly norm-minimizing.

Since $[S]$ is not a corner of the Thurston norm, we have $c[S]=a[R]+b[T]$ for some positive integers $a,b$, and $c$ and adjacent corners $[R]$ and $[T]$. Take $R$ and $T$ to be properly norm-mimizing surfaces so that $aR+bT$ is a properly norm-minimizing surface, using Proposition~\ref{cutpasteprop}. Then $\chi(\widehat{aR+bT})=c\chi(\widehat{S})$. If $\widehat{aR+bT}$ is norm-minimizing, so is $\widehat{S}$, so to prove the claim we may assume that $S=aR+bT$.

 Let $(M,\gamma)$ be the complementary sutured manifold to $S$ in $X$. Since $S$ is norm-minimizing, $(M,\gamma)$ is taut. For each intersection of $aS_1$ with $bS_2$, we obtain a product disk in $(M,\gamma)$. See Figure~\ref{fig:cutandpaste}.

\begin{figure}
\labellist
\small\hair 2pt
\pinlabel $\textcolor{red}{R}$ at -20 80
\pinlabel $\textcolor{blue}{T}$ at 150 180
\pinlabel $R+T$ at 460 180
\endlabellist
\begin{centering}{\includegraphics[width=100mm]{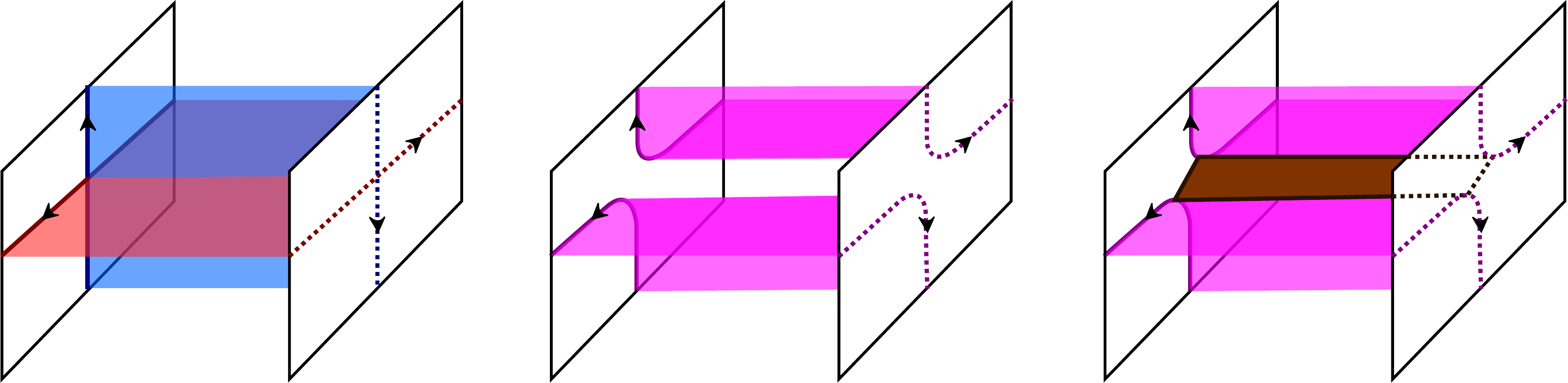}}
\caption{{\bf{Left:}} An arc of intersection between two surfaces $R$ and $T$. {\bf{Middle:}} The cut-and-paste surface $S=R+T$. {\bf{Right:}} A product disk in the complementary sutured manifold to $S$.}\label{fig:cutandpaste}
\end{centering}
\end{figure}

Since $\lk:=\lk(L_1,L_2)\neq 0$, the map $\boundary:H_2(X,\boundary X;\R)\to H_1(\boundary X;\R)$ is an injection. Thus $\boundary R$ and $\boundary T$ have nonmultiple slopes on $P_1$, so $\boundary R\cap\boundary T\neq\emptyset$. Then $R$ and $T$ must have some arcs of intersection.

Take $\boundary R$ and $\boundary T$ to intersect minimally so that any arc of intersection between $R$ and $T$ meets both $P_1$ and $P_2$. Then for each element $A$ of $A(\gamma)$ in $P_1$, there is a product disk connecting $A$ to an element of $A(\gamma)$ in $P_2$.

Let $(M',\gamma')$ be the complementary sutured manifold to $\widehat{S}$ in $N$. Then $M'$ is obtained from $M$ by attaching $3$-dimensional $2$-handles to each element of $A(\gamma)$ in $P_1$, and $\gamma'$ is obtained from $\gamma$ by forgetting those elements of $A(\gamma)$. By Lemma~\ref{lemmadiskdecomp}, $(M',\gamma')$ is the result (up to homeomorphism) of a sequence of product-disk decompositions of $(M,\gamma)$. Therefore, tautness of $(M,\gamma)$ implies that $(M',\gamma')$ is taut, so $\widehat{S}$ is norm-minimizing. This concludes the proof of Theorem~\ref{fillonethm}.

Now we prove Theorem~\ref{fillonethm2}. It is again sufficient to prove the claim when $S$ is properly norm-minimizing. Since $[S]$ is not a corner of the Thurston norm on $X$, we can write $c[S]=a[R]+b[T]$ for positive integers $a,b,c$, where $R$ and $T$ are properly norm-minimizing surfaces and $[R], [T]$ are adjacent corners. There are now many choices of $[R]$ and $[T]$. Again, it is sufficient to consider $S=aR+bT$ (that is, $c=1$).

For appropriate choice of $[R]$ and $[T]$, $\boundary R$ and $\boundary T$ both meet each boundary component of $X$ and do so in distinct slopes. Here we are using the pairwise nonzero linking numbers of $L_1,\ldots, L_n$ -- this means that for fixed $q\in\Q$, $j\in\{1,\ldots, n\}$, there is an $(n-2)$-dimensional set of rays in $H_2(X,\boundary X;\R)$ with boundary slope $q$ on $P_j$ There is an $(n-3)$-dimensional set of rays in $H_2(X,\boundary X;\R)$ with no boundary on $P_i$, because there are $(n-1)$-dimensions of rays in $H_2(X,\boundary X;\R)$ altogether and the condition on $P_i$ gives two $1$-dimensional conditions -- as a sum of homology classes of punctured Seifert surfaces for $L_j$'s, there cannot be any for $L_i$ and there is a linear equation on the other summands in terms of the linking numbers that must be satisfied). Let $\{A_1,\ldots, A_m\}$ be the elements of $A(\gamma)$ in $P_1\sqcup\cdots\sqcup P_k$. Then for each $j$, there is a product disk corresponding to an intersection of $aR$ with $bT$ meeting $A_j$. By concatenating product disks, we can find disjoint product disks $D_1,\ldots, D_m$ so that $D_i$ meets $A_i$ and an element of $A(\gamma)$ in $P_n$.

Let $(M',\gamma')$ be the complementary sutured manifold to $\widehat{S}$ in $N$. Then $M'$ is obtained from $M$ by attaching $3$-dimensional $2$-handles to each element of $A(\gamma)$ in $P_1\sqcup\cdots\sqcup P_{k}$, and $\gamma'$ is obtained from $\gamma$ by forgetting those elements of $A(\gamma)$. By Lemma~\ref{lemmadiskdecomp}, $(M',\gamma')$ is the result (up to homeomorphism) of a sequence of product-disk decompositions of $(M,\gamma)$. Therefore, tautness of $(M,\gamma)$ implies that $(M',\gamma')$ is taut, so $\widehat{S}$ is norm-minimizing.

\end{proof}

As well as modifying sutured manifolds, one can use product disks and annuli to modify foliations. In particular, if $\F$ is a taut foliation on $M$, then we are interested in $\F|_{\boundary M}$.

\begin{definition}
Let $M$ be a compact $3$-manifold with some torus boundary component $P$. Let $\F$ be a foliation on $M$ with no Reeb components on $P$. Suppose $\F|_P$ includes two circles $C_1, C_2$ cobounding annulus $A\subset P$. Choose coordinates $A=I\times S^1=I\times[0,2\pi]/\sim$. Because $\F|_P$ does not have a Reeb component in $P$, there is some boundary-preserving automorphism $f:I\to I$ so that $\lim_{t\to 0^+}x\times t$ and $\lim_{t\to 2\pi^-}f(x)\times t$  are contained in the same leaf of $\F$ -- i.e. $\F|_A$ is induced by the mapping torus structure $A=I\times[0,2\pi]/((x,0)\sim(f(x),1))$. We say that $\F|_A$ is a {\emph{suspension}} of $f$.
\end{definition}

We summarize a few operations introduced by Gabai~\cite{suspensions} which we use in the proof of Theorem~\ref{2compthm}. We skip several interesting operations which we will not make use of, and state others in less than full generality.

\begin{operation}[Thickening Leaves,~\cite{denjoy},{\cite[Operation 2.1.1]{suspensions}}]
Suppose $L$ is a leaf of a taut foliation $\F$ on a $3$-manifold $M$ or sutured manifold $(M,\gamma)$. Assume $L$ is two-sided in $M$. Let $M'$ be obtained from $M$ by deleting $L$ and replacing it with $L\times I$. (If $M$ is sutured, then extend $\gamma$ to $\boundary M'$ naturally, with $(\boundary L)\times I$ contained in elements of $A(\gamma)$.) Let $\F'$ be the taut foliation of $M'$ which agrees with $\F$ outside of $L\times I$, and includes $L\times t$ as a leaf for each $t\in I$.

Identify $M'$ with $M$. We say the taut foliation $\F'$ on $M$ is obtained from $\F$ by {\emph{thickening}} the leaf $L$.
\end{operation}

\begin{operation}[$I$-bundle replacement, {\cite[Operation 2.1.3]{suspensions}}]
Let $L$ be a leaf of a taut foliation $\F$ on a $3$-manifold $M$ or sutured manifold $(M,\gamma)$. Assume $L$ is two-sided in $M$. Let $M'$ be obtained from $M$ by deleting $L$ and replacing it with $L\times I$. (If $M$ is sutured, then extend $\gamma$ to $\boundary M'$ naturally, with $(\boundary L)\times I$ contained in elements of $A(\gamma)$.) Let $\F'$ be a taut foliation of $M'$ which agrees with $\F$ outside of $L\times I$, includes $L\times 0$ and $L\times 1$ as leaves, and so every leaf of $\F'|_{L\times (0,1)}$ is transverse to $x\times I$ for each $x\in L$.

Identify $M'$ with $M$. We say the taut foliation $\F'$ on $M$ is obtained from $\F$ by {\emph{I-bundle replacement}} on the leaf $L$. This generalizes the leaf thickening operation.
\end{operation}

\begin{operation}[Suspension change, {\cite[Operation 2.2]{suspensions}}]\label{suspensionchange}
See Figure~\ref{fig:changesuspension} for an illustration of this operation. Let $D$ be a product disk in sutured manifold $(M,\gamma)$. Let $\F$ be a taut foliation on $(M\gamma)$; take $D$ to be transverse to the leaves of $\F$. Assume $\boundary D$ meets two distinct elements $A_0$ and $A_1$ of $A(\gamma)$. Say $\F|_{A_i}$ is a suspension of $f_i:I\to I$ for each $i=0,1$. Choose coordinates $D=I\times I$, so $I\times 0\subset A_0$, $I\times 1\subset A_1$, $S^0\times I\subset R(\gamma)$, and each $x\times I$ is contained in one leaf of $\F$.

Let $V=D\times I$, where the $I$ direction is small and chosen so that $(\boundary D)\times I\subset\boundary M$. Parametrize $V=D\times I=(I\times I)\times I$ so that each $(x\times I)\times I$ is contained in one leaf of $\F$.

Choose a boundary-preserving automorphism $g:I\to I$. We obtain a taut foliation $\F'$ on $M$ by excising $V\cap\int{M}$ (with the foliation $\F|_V$) and regluing $V=I\times I\times I$ via \[\begin{cases}V\ni((x,t),s)\sim ((x,t),s)\in \overline{M\setminus V}&\parbox{4.5cm}{if $x,s\in I$, $t<1$,\\ $((x,t),s)\in M\cap V$}\\[1em]
V\ni((x,1),s)\sim ((f(x),1),s)\in \overline{M\setminus V}&\text{for all $x,s\in I$}.\end{cases}\] 

In words, we cut out $V=D\times I$ and reglue by the identity, except that we shift $D\times 1$ vertically according to the map $g$. This changes the intersection $\F'|_{A_i}$. Now $\F'|_{A_0}$ is a suspension of $f_0\circ g$ while $\F'|_{A_1}$ is a suspension of  $g^{-1}\circ f_1$. In particular, note that if we choose $g=f_0^{-1}$, then $\F'|_{A_0}$ is a suspension of the identity, i.e. a product foliation of compact circles.

\end{operation}

\begin{figure}
\labellist
\small\hair 2pt
\pinlabel $R_+(\gamma)$ at 175 215
\pinlabel $R_-(\gamma)$ at 110 -15
\pinlabel $\F|_V$ at -25 80
\pinlabel $R_+(\gamma)$ at 500 215
\pinlabel $R_-(\gamma)$ at 440 -15
\pinlabel $\F'|_V$ at 640 120
\endlabellist
\begin{centering}
\includegraphics[width=80mm]{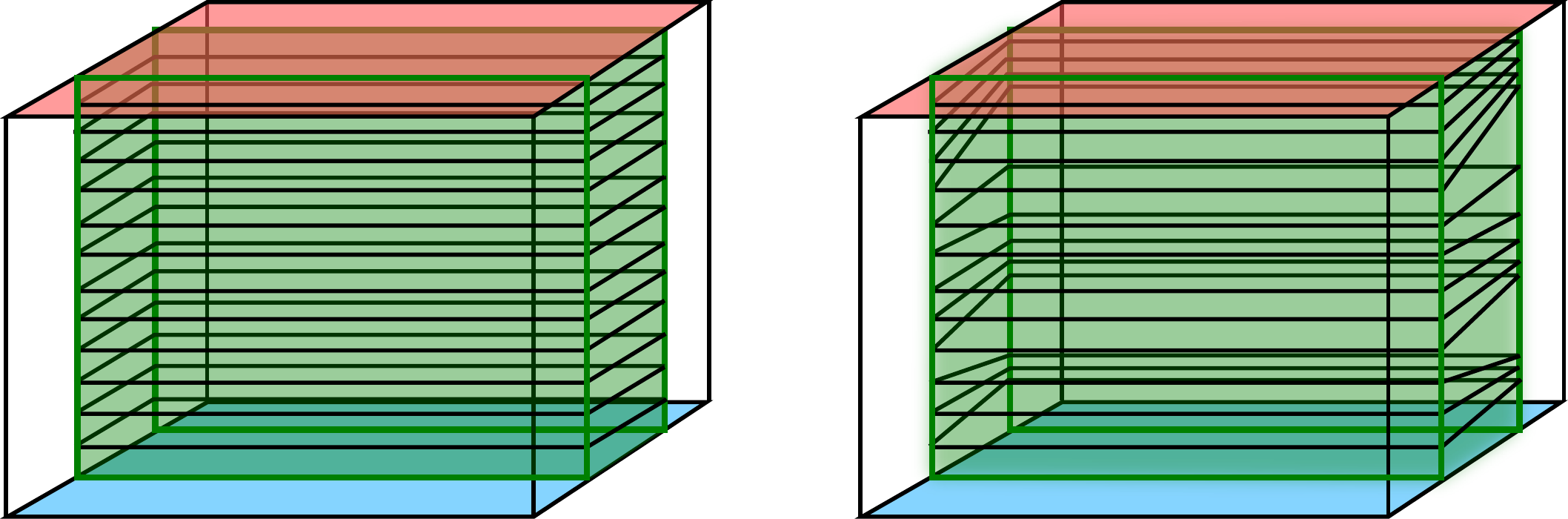}
\caption{{\bf{Left:}} $V=D\times I$, where $D$ is a product disk in a sutured manifold $(M,\gamma)$. We draw the intersection of $V$ with a taut foliation $\F$. {\bf{Right:}} We perform a suspension change operation (Operation~\ref{suspensionchange}) on $V$ to obtain a new taut foliation $\F'$. We draw the intersection of $\F'$ with the reglued $V$. Note that $\F'$ and $\F$ do not agree on the elements of $A(\gamma)$ met by $D$.}\label{fig:changesuspension}
\end{centering}
\end{figure}

The suspension change operation allows us essential freedom when performing the $I$-bundle replacement operation.

\begin{lemma}[Gabai~\cite{suspensions}]\label{anyhomeo}
Let $F$ be a connected, compact  positive-genus surface with non-empty boundary. Fix some boundary component $C$ of $F$.  Then for any homeomorphism $f:I\to I$, there exists a foliation $\F$ of $F\times I$ transverse to the vertical $I$ fibers so that $\F|_{C\times I}$ is a suspension of $f$ and $\F|_{(\boundary F\setminus C)\times I}$ is a product foliation.

\end{lemma}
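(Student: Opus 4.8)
The plan is to recognize a foliation of $F\times I$ transverse to the vertical $I$-fibers as a foliated $I$-bundle over $F$, i.e. as the suspension of a representation $\rho:\pi_1(F)\to\operatorname{Homeo}_+(I,\boundary I)$ into the group of orientation-preserving, endpoint-fixing homeomorphisms of $I$. Given such a $\rho$, I would form $\widetilde F\times I$ (with $\widetilde F$ the universal cover of $F$) and quotient by the diagonal action $\gamma\cdot(\tilde x,s)=(\gamma\tilde x,\rho(\gamma)s)$. The horizontal foliation by the slices $\widetilde F\times\{s\}$ is invariant under this action (each slice is carried to another slice, since $\rho(\gamma)$ is a homeomorphism of the fiber), so it descends to a foliation of the quotient that is transverse to the fibers $\{*\}\times I$. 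Because every $\rho(\gamma)$ fixes $\boundary I$, the slices $s=0$ and $s=1$ give two disjoint sections, so the quotient is a product $I$-bundle which I identify with $F\times I$, the $\{x\}\times I$ becoming the fibers. For a loop $\gamma$ the monodromy is $\rho(\gamma)$, and the restriction of the foliation to a boundary annulus $C_j\times I$ is exactly the mapping-torus foliation of $\rho(c_j)$, i.e. the suspension of $\rho(c_j)$. Thus it suffices to produce $\rho$ with $\rho(c)=f$ on the boundary loop $c$ of the chosen component $C$ and $\rho(c_j)=\id$ on every other boundary loop.

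Since $F$ has nonempty boundary, $\pi_1(F)$ is free of rank $2g+b-1$, where $g=g(F)\ge 1$ and $b$ is the number of boundary components. I would choose standard generators $a_1,b_1,\ldots,a_g,b_g$ together with boundary loops $c=c_1,c_2,\ldots,c_b$ subject to the single surface relation $\prod_{i=1}^g[a_i,b_i]\cdot c_1\cdots c_b=1$, which merely expresses $c_b$ in terms of the remaining (free) generators. A representation is then precisely a choice of images $\alpha_i,\beta_i$ of $a_i,b_i$ and $\delta_j$ of $c_j$ satisfying $\prod_i[\alpha_i,\beta_i]\cdot\delta_1\cdots\delta_b=\id$. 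Imposing $\delta_1=f$ and $\delta_j=\id$ for $j\ge 2$ reduces the entire problem to solving $\prod_{i=1}^g[\alpha_i,\beta_i]=f^{-1}$ in $\operatorname{Homeo}_+(I,\boundary I)$.

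The crux, and the one place where the positive-genus hypothesis is essential, is to write $f^{-1}$ as a product of commutators; since $g$ may equal $1$ I in fact need $f^{-1}$ to be a \emph{single} commutator, after which I set $[\alpha_1,\beta_1]=f^{-1}$ and all other $\alpha_i,\beta_i=\id$. This is available because $\operatorname{Homeo}_+(I,\boundary I)\cong\operatorname{Homeo}_+(\R)$ has commutator width one. Concretely, writing $h=f^{-1}$ and using $[\phi,\psi]=\phi\psi\phi^{-1}\psi^{-1}$, the equation $[\phi,\psi]=h$ is equivalent to $\phi\psi\phi^{-1}=h\psi$, so it suffices to choose $\psi$ with $\psi$ and $h\psi$ conjugate and let $\phi$ realize the conjugacy. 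Choosing $\psi$ fixed-point-free with $\psi(x)>\max\{x,h^{-1}(x)\}$ for all interior $x$ makes both $\psi$ and $h\psi$ fixed-point-free and increasing in the same direction, hence conjugate in $\operatorname{Homeo}_+(\R)$ by matching fundamental domains; the conjugator extends to fix $\boundary I$, giving $[\phi,\psi]=h\psi\psi^{-1}=h$.

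Feeding the resulting $\rho$ into the suspension construction of the first paragraph yields a foliation $\F$ of $F\times I$ transverse to the fibers with $\F|_{C\times I}$ a suspension of $f$ and $\F|_{C_j\times I}$ a suspension of the identity, i.e. a product foliation, for the remaining boundary components, exactly as required. I expect the only genuine obstacle to be the algebraic input of the third paragraph, namely the single-commutator realization of an arbitrary interval homeomorphism, together with the bookkeeping confirming that positive genus supplies precisely the commutator factor that absorbs the prescribed monodromy $f$; the bundle-theoretic translation in the first paragraph and the assembly here are then routine.
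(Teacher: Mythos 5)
Your proof is correct, but it takes a genuinely different route from the paper's. The paper argues geometrically within its own toolkit: it cuts $F$ along a non-separating simple closed curve $\alpha$, puts the product foliation on $(F\setminus\nu(\alpha))\times I$, performs Gabai's suspension-change operation (Operation~\ref{suspensionchange}) along two product disks joining $C\times I$ to the two new vertical annuli, and then reglues $\nu(\alpha)\times I$; the regluing is possible because the two induced boundary suspensions can be arranged to be conjugate, and an arbitrary $f$ is handled by subdividing $I$ at the fixed points of $f$ and running the argument on each complementary interval where $f$ is fixed-point-free. You instead use the classical foliated-bundle picture: a foliation of $F\times I$ transverse to the fibers is the suspension of a holonomy representation $\rho:\pi_1(F)\to\operatorname{Homeo}_+(I,\boundary I)$, the surface relation $\prod_i[a_i,b_i]\,c_1\cdots c_b=1$ converts the boundary prescription into the equation $\prod_i[\alpha_i,\beta_i]=f^{-1}$, and positive genus plus commutator width one of $\operatorname{Homeo}_+(I,\boundary I)$ solves it. The two arguments are secretly the same identity --- the paper's regluing of the two cut annuli by a conjugating homeomorphism is exactly the geometric realization of writing $f$ as a single commutator, and both rest on the fact that fixed-point-free orientation-preserving homeomorphisms of the interval moving points in the same direction are conjugate --- but your packaging has a concrete advantage: the choice $\psi(x)>\max\{x,h^{-1}(x)\}$ makes both $\psi$ and $h\psi$ fixed-point-free simultaneously, so you handle all $f$ at once and avoid the paper's case analysis at fixed points (which, as written, needs an extra word when the fixed-point set of $f$ is infinite, e.g.\ a Cantor set). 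What the paper's version buys in exchange is that it never leaves the suspension-change formalism that the proof of Theorem~\ref{mingenuscor} actually invokes, so the lemma and its application use identical moves. Two small points in your write-up deserve a sentence each if incorporated: the existence of $\psi$ (e.g.\ $\psi=\sqrt{m}$ where $m=\max\{\id,h^{-1}\}$ is itself an increasing homeomorphism of $(0,1)$), and the triviality of the $I$-bundle, which follows from connectedness (indeed contractibility, by the Alexander trick) of $\operatorname{Homeo}_+(I,\boundary I)$ rather than from the existence of the two boundary sections alone; with the caveat that any fiber-preserving trivialization identifies the boundary monodromies only up to conjugacy, which is all that ``suspension of $f$'' can mean and is the same level of precision at which the paper's proof operates.
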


\begin{proof}
Let $\alpha$ be a non-separating simple closed curve on $F$. Let $H=F\setminus\nu(\alpha)$. Let $\G$ be the product foliation on $H\times I$.
Since $\alpha$ is non-separating, there exist paths $\beta_1,\beta_2$ in $H$ from $C$ to the distinct components $C_1,C_2$ of $\boundary H\setminus\boundary F$. Then $\beta_1\times I,\beta_2\times I$ are product disks for $\G$ in $H\times I$. See Figure~\ref{fig:genustrick}.

Let $g,h:I\to I$ be automorphisms. By performing the suspension change operation on $\G$ at $\beta_i\times I$, we may find a foliation $\G'$ on $H$ (transverse to the vertical fibers) so that $\G'|_{C\times I}$ is a suspension of $gh$, $\G'|_{C_1\times I}$ is a suspension of $\bar{g}$, and $\G'_{C_2\times I}$ is a suspesnion of $\bar{h}$.

\begin{figure}
\begin{centering}
\labellist
\small\hair 2pt
\pinlabel $F\times I$ at 40 175
\pinlabel $C\times I$ at 19 40
\pinlabel $C\times I$ at 290 40
\pinlabel {suspension} at 525 125
\pinlabel {change} at 525 105
\endlabellist
{\includegraphics[width=100mm]{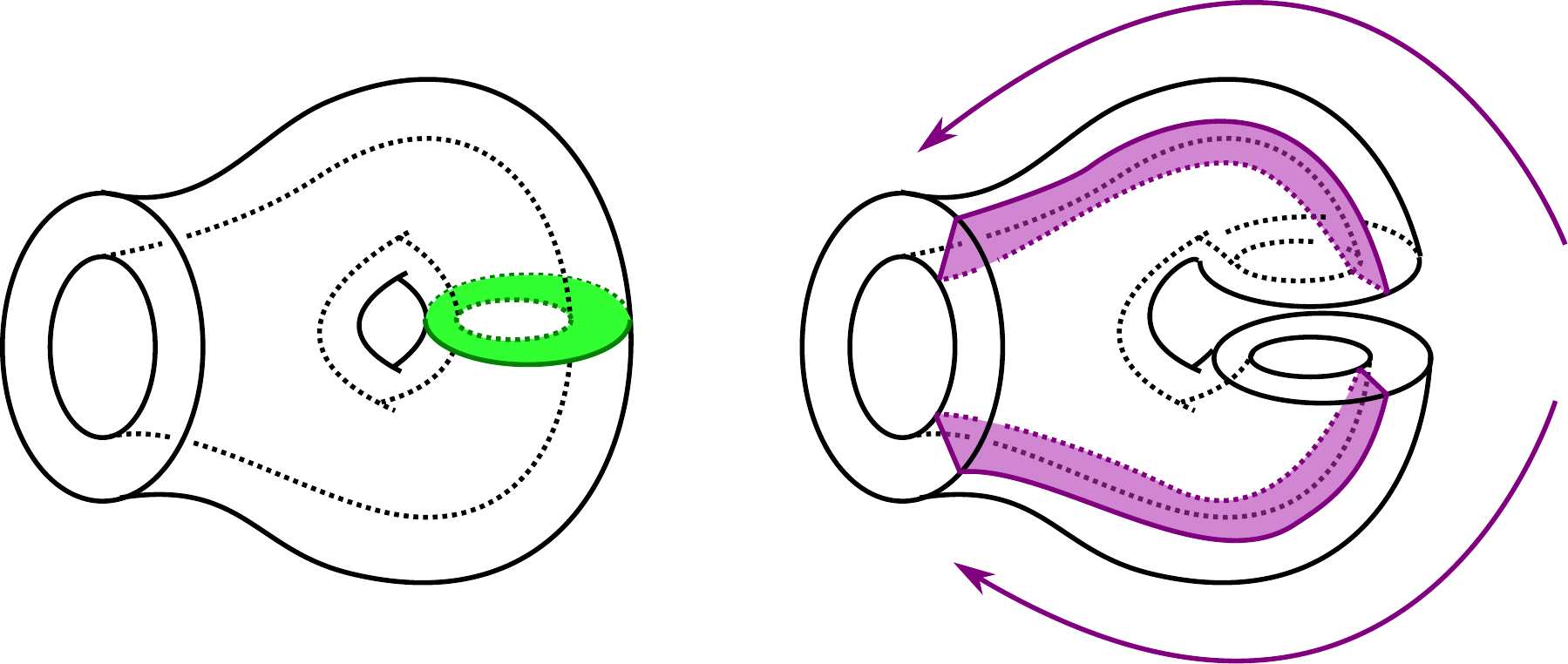}}
\caption{{\bf{Left:}} $F\times I$, where $F$ is a connected positive-genus surface. Let $C$ be a boundary component of $F$. The curve $\alpha$ in $F$ is non-separating. We highlight $\alpha\times I$. {\bf{Right:}} In $H\times I$ (where $H=F\setminus\nu(\alpha)$), we can find product disks connecting $C\times I$ to the two new vertical (transverse to the product foliation $\G$) annuli $C_1\times I,C_2\times I$ in $(\boundary H)\times I$. By performing the suspension change operation on $\G$ at these product disks, we may find a new taut foliation $\G'$ on $H\times I$ so that $\G'|_{C\times I}$ is a suspension of any desired $f:I\to I$. By performing the two suspension changes carefully, as in Lemma~\ref{anyhomeo}, we may arrange that $\G'|_{C_1\times I}$ and $\G'|_{-C_2\times I}$ are suspensions of conjugate automorphisms, so that we may extend $\G'$ to a taut foliation $\F$ of $F\times I$. }\label{fig:genustrick}
\end{centering}
\end{figure}

If $\overline{g}$ and $\overline{\overline{h}}=h$ are conjugate, then we may extend $\G'$ to a taut foliation $\F$ on $F\times I$ by attaching a foliated $(S^1\times I)\times I$ to $H\times I$.

To pick the appropriate $g,h$, assume that $f$ has no fixed points (if $f$ is the identity then the product foliation satisfies the lemma; if $f$ fixes finitely many points then we may repeat this argument on each subinterval in which $f$ has no fixed points). Then $f$ is everywhere expanding or contracting. Let $g$ be a (boundary-preserving) automorphism of $I$ that is also everywhere expanding or contracting, but with the opposite behavior of $f$ (that is, if $f$ expands then $g$ contracts, and vice versa). Take $h=\bar{g}f$. Then $gh=f$, and $g$ and $\bar{h}$ are both every contracting or expanding so are conjugate. The foliation $\F$ is the desired foliation.

\end{proof}

\section{Fat-vertex graphs}\label{sec:graphs}

In this section, we will use a fat-vertex graph to record product disks and find product annuli for a complementary sutured manifold.

\begin{definition}
A {\emph{fat-vertex graph}} is a graph $(V,E)$ on vertices in $V=\{v_1,\ldots, v_m\}$ connected by edges in $E$ along with injective maps $\phi_i:E_{i}\to S^1$ for each $v_i\in V$, where $E_i$ is the set of ends of edges in $E$ at $v_i$. In words, a fat-vertex graph is a graph with a cyclic ordering of edges at each vertex. This cyclic ordering allows us to describe the {\emph{neighborhood}} $\nu(G)$ of $G$. This $\nu(G)$ is a surface with boundary constructed by starting with $n$ disjoint disks $D_1,\ldots, D_m$ in correspondence with $v_1,\ldots, v_m$, and then attaching a band between $D_i$ and $D_j$ for each edge between $v_i$ and $v_j$. The attaching regions of bands along $\boundary D_i$ respects the cyclic ordering of the edges incident to $v_i$.

Note that although we typically use $\nu$ to mean an open neighborhood, in this context it is clear that we are only interested in compact surfaces, so we take $\nu(G)$ to be compact with boundary. We may refer to $D_i$ as $\nu(v_i)$. We refer to each boundary component of $\nu(G)$ as a {\emph{face}} of $G$.
\end{definition}

\begin{construction}[A fat-vertex graph describing product disks and annuli in the complementary sutured manifold to a cut-and-paste surface]\label{construct}

Let $L=L_1\sqcup L_2$ be a $2$-component link in a rational homology sphere $Y$ with nonzero linking number. Let $X=Y\setminus\nu(L)$. The $3$-manifold $X$ has two torus boundary components $P_1$ and $P_2$, where $P_i=\boundary\overline{\nu(L_i)}$.

Let $S$ be a norm-minimizing surface in $X$. 
Let $(M,\gamma)$ be the complementary sutured manifold to $S$ in $X$.

Suppose $\{D_1,\ldots, D_n\}$ is some collection of pairwise disjoint product disks in $(M,\gamma)$, where each disk connects an element of $A(\gamma)$ in $P_1$ to an element of $A(\gamma)$ in $P_2$. Say the sutures of $\gamma$ are $s_1,\ldots, s_m$. Construct a fat-vertex graph $G$ as follows:

Let $G$ have vertices $v_{1},\ldots, v_{m}$ and edges $e_{1},\ldots, e_{n}$. If the product disk $D_i$ connects sutures $s_j$ and $s_k$, then the edge $e_{i}$ goes between vertices $v_{j}$ and $v_{k}$. The cyclic order of the edges at each vertex corresponds to the cyclic order of product disks at each suture. That is, if when travelling (positively) around suture $s_i$ we meet product disks $D_{i_1},\ldots, D_{i_j}$ in order, then when travelling around $v_{i}$ we should find edges $e_{{i_1}},\ldots,e_{{i_j}}$ in order.

Say $G$ has faces $C_1,\ldots, C_f$. We construct product annuli $A_1,\ldots, A_f$ in $(M,\gamma)$ as follows:

\begin{itemize}
 \item Given an arc $a$ in $C_i\cap(\boundary\overline{\nu (v_j)})$ lying close to $v_j$ between edges $e_k$ and $e_l$, let $\Delta_a$ be the corresponding part of the suture $s_j$ lying between product disks $D_k$ and $D_l$. Push $\Delta_a$ slightly into the interior of $M$, so $\boundary \Delta\cap R_+$ is one arc, $\boundary \Delta\cap R_-$ is one arc, and two arcs of $\boundary\Delta$ lie in the interior of $M$.
\item Given an arc $b$ in $C_i$ that is parallel to edge $e_j$, let $\Delta_b$ be a copy of $D_i$ pushed slightly to the side, in the same direction one would push $e_j$ to obtain $b$. 
\item Then if $C_i=\cup_{j=1}^k \alpha_j$, where each $\alpha_j$ is an arc as in one of the two previous bullet points, we form a product annulus $A_i=\cup_{j=1}^k \Delta_{\alpha_j}$. In the construction of each $\Delta_{\alpha_j}$, we chose the pushoffs from $\boundary M$ or $D_l$ so that the edges of the $\Delta_{\alpha_j}$ contained in the interior of $M$ match up, and $A_i$ is an annulus with one boundary component in $R_+$ and the other in $R_-$. See Figure~\ref{fig:productannulus}.
\end{itemize}

\begin{figure}
\labellist
\small\hair 2pt
\pinlabel $G$ at 0 300
\pinlabel $\textcolor{red}{C_i}$ at 100 260
\pinlabel $(M,\gamma)$ at 200 300
\pinlabel $\textcolor{red}{A_i}$ at 320 275
\pinlabel $A(\gamma)$ at 300 20
\endlabellist
\begin{centering}
\includegraphics[width=75mm]{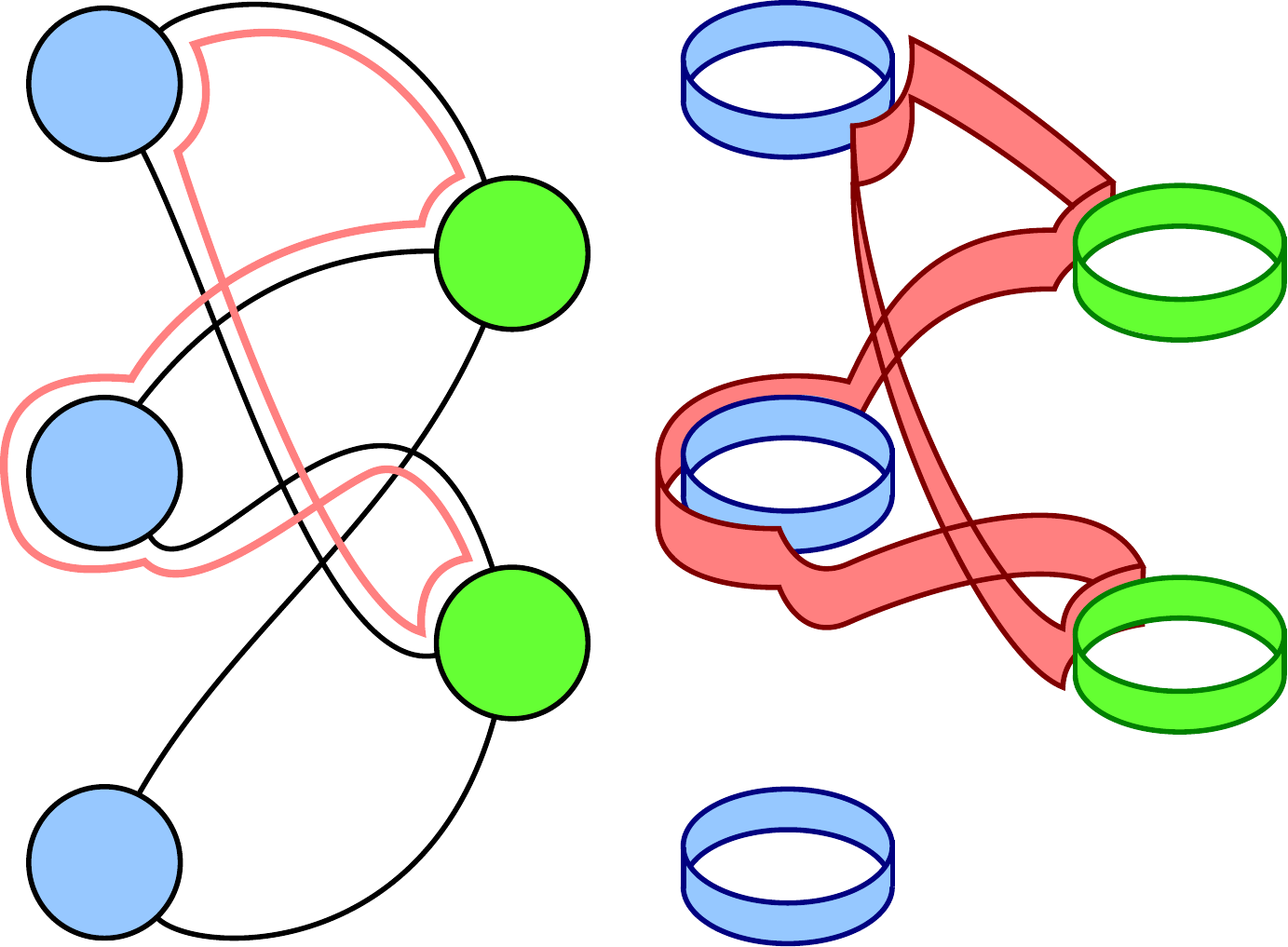}
\caption{Left: a face $C_i$ of $G$, where $G$ is a fat-vertex graph describing product disks in $(M,\gamma)$. Right: we use $C_i$ to construct a product annulus $A_i$ in $(M,\gamma)$.}\label{fig:productannulus}\end{centering}
\end{figure}

We will continue to use this notation. From now on, $G$ will always refer to a fat-vertex graph describing product disks in a norm-minimizing surface complement in $S^3\setminus\nu(L)$. Later, we will specify the product disks to come from cut-and-paste resolutions as in Proposition~\ref{cutpasteprop}. When we say that a product annulus $A_i$ corresponds to a face $C_i$ of $G$, we will always mean as in this construction.

\end{construction}

\begin{remark}\label{remarkedisks}
Say $G$ has components $G_1,\ldots, G_c$. Let $(M',\gamma')$ be the sutured manifold obtained by decomposing $(M,\gamma)$ by product annulus $A_i$ corresponding to a face $C_i$ of $G$. Say $C_i$ is face of component $G_j$. Let $s_1,\ldots, s_m$ be the sutures of $\gamma$ corresponding to vertices of $G_j$. Then there are disjoint product disks $E_1,\ldots, E_m\subset (M',\gamma')$ so that $E_k$ connects $s_k$ to a suture in $\gamma'\setminus\gamma$ as follows:
\begin{itemize}
 \item Let $q$ be a point in $C_i$. For each $k=1,\ldots, m$, fix a path $p_k$ in $\boundary\nu(G_j)$ connecting a point  in $\boundary\nu(v_k)$ to $q$. 
\item As in the construction of $A_1,\ldots, A_k$ in Construction~\ref{construct}, each $p_k$ gives a product disk meeting the suture $s_k$ and a suture in $\gamma'\setminus\gamma$. Push the product disks $E_1,\ldots, E_m$ slightly off of each other to be disjoint.
\end{itemize}

We draw the product disks $E_1,\ldots, E_m$ in Figure~\ref{fig:edisks}. Note that each $E_k$ meets {\emph{the same}} suture in $\gamma'\setminus\gamma$.

\end{remark}

\begin{figure}
\begin{centering}
\labellist
\small\hair 2pt
\pinlabel $G$ at 0 300
\pinlabel $\textcolor{red}{C_i}$ at 100 265
\pinlabel $q$ at 100 230
\pinlabel $(M,\gamma)$ at 200 300
\pinlabel $\textcolor{red}{A_i\times I}$ at 225 195
\endlabellist
\includegraphics[width=75mm]{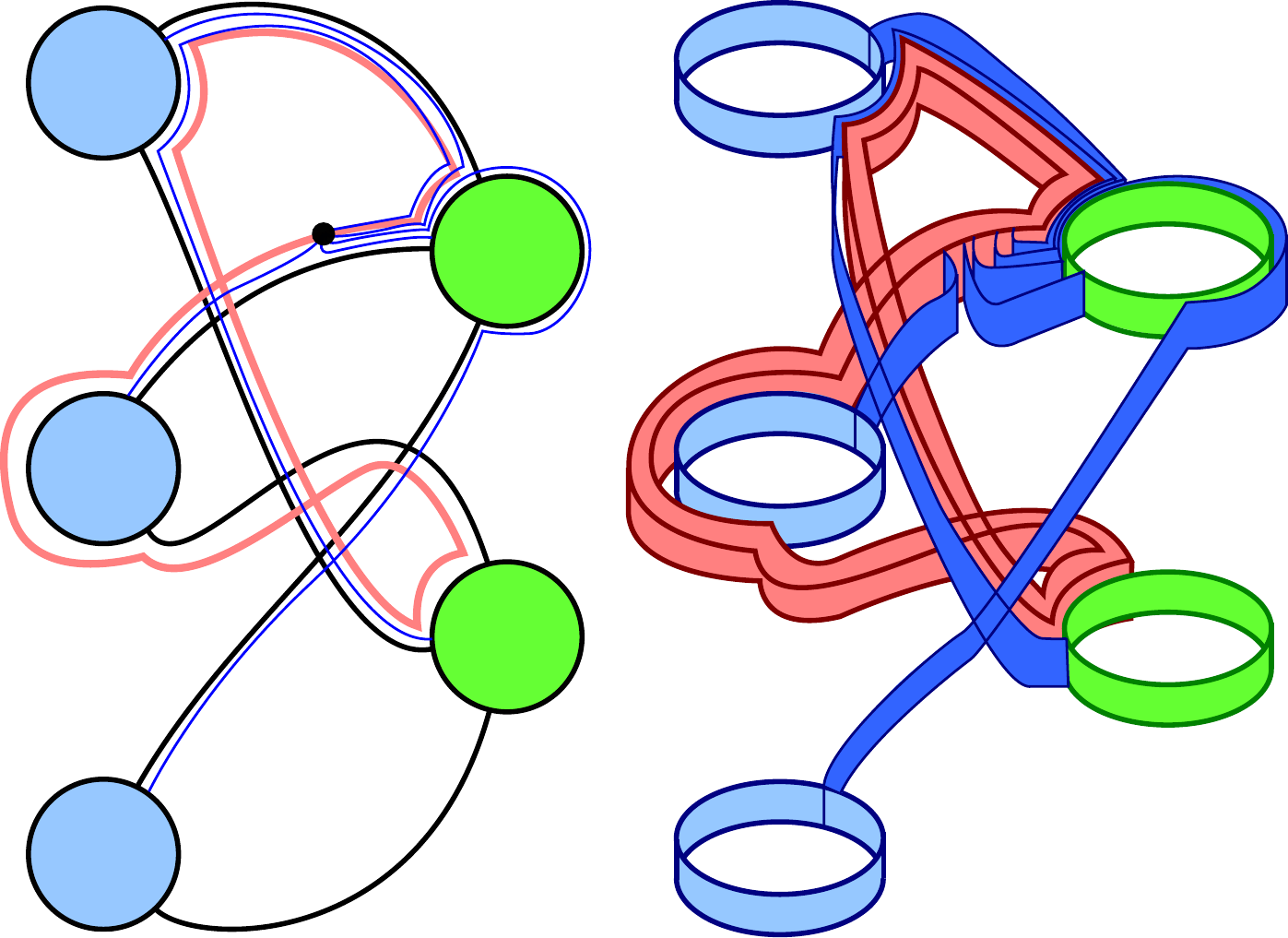}
\caption{Left: paths in $\nu(G)$ connecting each $\boundary\nu(v_k)$ to a point $q$ in $\boundary\nu(G)$. The face containing $q$ corresponds to a product annulus $A$ in $(M,\gamma)$. {\bf{Right:}} The paths yields product disks connecting corresponding elements of $A(\gamma)$ to an element of $A(\gamma')\setminus A(\gamma)$, where $(M,\gamma)\xrightarrow{A_i}(M',\gamma')$.}\label{fig:edisks}\end{centering}
\end{figure}

\section{Proof of Theorem~\ref{mingenuscor}}\label{sec:proof}

\begin{mingenuscor}
Let $L=L_1\sqcup L_2$ be an $2$-component link  in a rational homology $3$-sphere $Y$. Assume $\lk(L_1,L_2)\neq 0$ and that $X:=Y\setminus\nu(L)$ has nondegenerate Thurston norm. Let $S$ be a properly norm-minimizing surface in $X$ with $[S]$ primitive and not a corner of the Thurston norm. Let $\widehat{X}$ denote the closed manifold obtained from $X$ by Dehn filling each boundary component of $X$ according to the slope of $\boundary S\cap\boundary X$, and let $\widehat{S}\subset \widehat{X}$ be the closed surface obtained from $S$ by capping off each component of $\boundary S$ by a disk in a Dehn-filling solid torus. 

If $\widehat{S}$ is {\emph{not}} norm-minimizing, then either $g(S)=1$ or $g([S])$ is minimal among all classes in the interior of the same face of the Thurston norm as $[S]$. In particular, if $[S']$ is a primitive class in the interior of the same face as $[S]$ and $\widehat{S'}$ is also not norm-minimizing, then $g([S])=g([S'])$.
\end{mingenuscor}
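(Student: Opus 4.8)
I would prove the contrapositive: assuming $g(S)\ge 2$ and that $g([S])$ is \emph{not} minimal among classes in the face $C$ whose cone contains $[S]$, I will build a taut foliation $\mathcal G$ on $\widehat X$ having $\widehat S$ as a compact leaf, whence $\widehat S$ is norm-minimizing by Theorem~\ref{studyfoliation}. First reduce to a cut-and-paste surface. Since $[S]$ is not a corner, $c[S]=a[R]+b[T]$ for adjacent corners $[R],[T]$ and positive integers $a,b,c$; by Proposition~\ref{cutpasteprop} I may take $R,T$ with $aR+bT$ properly norm-minimizing. Capping off multiplies $\chi$ by $c$, so $\widehat{aR+bT}$ is norm-minimizing iff $\widehat S$ is, and I may assume $S=aR+bT$. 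With $\partial R,\partial T$ meeting minimally, each arc of $R\cap T$ meets both $P_1$ and $P_2$ and yields a product disk in the complementary sutured manifold $(M,\gamma)$ of $S$; these product disks assemble into the fat-vertex graph $G$ of Construction~\ref{construct}. As $S$ is norm-minimizing, $(M,\gamma)$ is taut, and Theorem~\ref{havefoliation1} supplies a taut foliation $\mathcal F$ on $X$ with $S$ as a leaf and no Reeb components on $\partial X$.

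\textbf{Reducing capping-off to trivializing the annulus holonomies.} On each annulus $A\in A(\gamma)$ the restriction $\mathcal F|_A$ is a suspension of some $f_A\colon I\to I$. If I can modify $\mathcal F$ through taut foliations so that every $f_A$ becomes the identity, then (there being no Reeb components) $\mathcal F|_{\partial X}$ consists of compact leaves parallel to $\partial S$, i.e.\ meridians of the Dehn-filling solid tori; capping each solid torus off by its meridian-disk foliation extends $\mathcal F$ to a taut foliation $\mathcal G$ on $\widehat X$ with the capped-off leaf $\widehat S$. So the whole problem becomes: \emph{trivialize all $f_A$ while staying taut.}

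\textbf{Trivialization and the genus input (the crux).} The two permitted moves are suspension changes (Operation~\ref{suspensionchange}) along the edges of $G$, which along an edge between $A,A'$ replace $(f_A,f_{A'})$ by $(f_A g,\,g^{-1}f_{A'})$, and $I$-bundle replacement on the leaf $S$, which by the genus trick of Lemma~\ref{anyhomeo} lets the positive genus of $S$ realize an \emph{arbitrary} holonomy on one chosen boundary annulus while keeping the others product. The plan is to walk a spanning tree of each component of $G$, using suspension changes to push all its holonomy onto a single annulus, then to absorb the surviving holonomies into the genus of $S$ via Lemma~\ref{anyhomeo}. The accounting is governed by the surface-group constraint $\prod_A f_A=\prod_{i=1}^{g(S)}[\rho(\alpha_i),\rho(\beta_i)]$ coming from the holonomy representation $\rho$ of the leaf $S$: suspension changes preserve the appropriately ordered total holonomy, so the number of independent holonomies surviving consolidation is controlled by the number of components (equivalently, by the faces, via Construction~\ref{construct} and Remark~\ref{remarkedisks}) of $G$, while the number the genus trick can simultaneously kill is controlled by $g(S)$. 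The product annuli from faces of $G$ are the device for connecting up the components of $G$: decomposing $(M,\gamma)$ along such an annulus (Lemma~\ref{tautdecomp}) produces a common new suture joined by product disks to every suture of a component (Remark~\ref{remarkedisks}), merging otherwise separate holonomy reservoirs. I expect the main obstacle to be exactly this count — showing that whenever $g(S)\ge 2$ \emph{and} $g([S])$ is not minimal in $C$ there are strictly more faces/handles available than holonomies to kill, so the consolidate-and-absorb procedure terminates with every $f_A$ trivial, and conversely that the only failures are $g(S)=1$ (too little genus for the last absorption) or $g([S])$ minimal in $C$ (the graph $G$ is as sparse as possible, with no spare face to merge components). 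Quantitatively I anticipate this reducing to comparing $g(S)$ and $|\partial S|$ against $\chi(\nu(G))=m-n$, using that minimality of $g([S])$ forces $|\partial S|$ to be maximal relative to $x([S])$, as in the slope/gcd computation of Proposition~\ref{g0prop}.

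\textbf{The final clause.} Granting the dichotomy, suppose $[S']$ is another primitive class in $C$ with $\widehat{S'}$ not norm-minimizing. Then each of $g([S]),g([S'])$ equals $1$ or the minimal genus $g_{\min}$ on $C$. Since $g\ge g_{\min}$ throughout $C$, the presence of any genus-$1$ exceptional class forces $g_{\min}\le 1$, and I would finish by checking from the explicit formula $g(\alpha)=\tfrac12\bigl(2+x(\alpha)-\sum_i|\langle\alpha,P_i\rangle|\bigr)$ on the linearly parametrized face that the two permitted values cannot in fact differ once both are realized by exceptional classes; hence $g([S])=g([S'])$.
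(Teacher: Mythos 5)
Your reductions (cut-and-paste to $S=aR+bT$ via Proposition~\ref{cutpasteprop}, the fat-vertex graph $G$, the taut foliation from Theorem~\ref{havefoliation1}, and the observation that capping off requires trivial holonomy on the suture annuli) all match the paper, but your crux mechanism fails: \emph{$I$-bundle replacement cannot ``absorb'' or trivialize holonomy}. When you replace the leaf $S$ by a foliated $I$-bundle as in Lemma~\ref{anyhomeo}, the new foliation on the enlarged suture annulus is the suspension of the \emph{concatenation} of $f_A$ with the inserted holonomy $\mu$ (the two maps act on adjacent subintervals of the stacked transversal), not of the composition $\mu\circ f_A$. A concatenation restricts to a rescaled copy of $f_A$ on a subinterval, so it is the identity only if $f_A$ already is; no choice of $\mu$ can cancel a nontrivial $f_A$. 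Only the suspension change (Operation~\ref{suspensionchange}) composes holonomies, and composition alone can merely push holonomy from one suture onto another, never kill the last one. If your absorption step were valid, the argument would show $\widehat{S}$ is norm-minimizing whenever $g(S)\ge 1$, with no reference to the face of the norm ball at all --- contradicting the example of Figure~\ref{fig:example2} and the existence of the paper's exceptional case. The surface-group constraint $\prod_A f_A=\prod_i[\rho(\alpha_i),\rho(\beta_i)]$ you propose as the obstruction is also not available: the foliation near $S$ is not a foliated bundle over $S$, so the $f_A$ are not values of a holonomy representation of $\pi_1(S)$ and satisfy no such relation.

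The paper's route is structured to avoid ever needing triviality of the last holonomy. It decomposes $(M,\gamma)$ along a single product annulus $A_i$ coming from a face of $G$, excising the solid torus $V_i=A_i\times I$; using the product disks of Remark~\ref{remarkedisks} and suspension changes, all of the \emph{original} sutures are made product foliations, with the remaining holonomies $f,g$ pushed onto the two new sutures $B_1,B_2$. The original sutures are then capped off, and the only remaining task is to re-glue $V_i$, which requires not triviality but \emph{conjugacy} of the concatenations $\mu_1^-f\mu_1^+$ and $\bar{\mu}_2^-\bar{g}\bar{\mu}_2^+$. This is where Lemma~\ref{anyhomeo} is genuinely used: choosing, say, $\mu_1^+=\bar{g}$ and $\mu_2^-=\bar{f}$ makes the two concatenations $\id f\bar{g}$ and $f\bar{g}\id$, which are conjugate. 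But those $\mu$'s can only be realized on components of $\widehat{S}_\pm$ cut along $\partial_\pm A_i$ that have positive genus, and this is exactly the source of the case analysis (separating vs.\ non-separating, which complementary pieces carry genus). The irreparable case is when, for every face of $G$, the genus of $S$ is concentrated in a single region $\Sigma_j$ bounded by $\partial_+A_i$; then Proposition~\ref{mingenusprop} concludes, because $\Sigma_j$ embeds as a subsurface of every cut-and-paste $xR+yT$, that $g(S)=g(\Sigma_j)\le g(S^{xy})$ --- this subsurface-containment argument is what actually produces the conclusion ``$g([S])$ is minimal in the face,'' and it has no counterpart in your counting plan (faces versus genus, or the gcd/slope estimate of Proposition~\ref{g0prop}, which bounds $|\boundary S|$ but says nothing about where the genus of $S$ sits relative to the product annuli).
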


\begin{proof}

Take $[S]$ to be primitive in $H_2(X,\boundary X;\R)$ and not a corner of the Thurston norm. Then $c[S]=a[R]+b[T]$ for some positive coprime integers $a$ and $b$, an integer $c>0$, and adjacent corners $[R]$ and $[T]$.

\begin{definition}
Let $\alpha_1,\alpha_2$ be the homology classes of punctured Seifert surfaces for multiples of $L_1$ and $L_2$, respectively. Let $\beta_1,\beta_2\in H_2(X;\boundary X)$ so $\beta_1=p\alpha_1+q\alpha_2$ and $\beta_2=r\alpha_1+s\alpha_2$ for some integers $p$, $q$, $r$, and $s$. We say the {\emph{determinant of $\beta_1$ and $\beta_2$}} is \[\det\left(\beta_1,\beta_2\right)=\bigg|\det\left(\begin{matrix}p&q\\r&s\end{matrix}\right)\bigg|.\]	

If $\beta_1$ and $\beta_2$ are vertices of the Thurston norm unit-ball, we call $\det(\beta_1\beta_2)$ the \emph{determinant of the face spanned by $\beta_1$ and $\beta_2$}.

\end{definition}

\begin{proposition}\label{primitive}
If $\det([S_1],[S_2])=1$, then $c=1$.
\end{proposition}
\begin{proof}
Consider the parallelogram $A$ in $\R^2=H_2(X,\boundary X;\R)$ spanned by $[S_1]$ and $[S_2]$. By Pick's Theorem, $\det([S_1],[S_2])=1+($the number of integer lattice points in the interior of $A$). (Here we use the fact that $[S_1]$ and $[S_2]$ are primitive to know the only lattice points on the boundary of $A$ are at the four corners of $A$.)

Thus, if $\det([S_1],[S_2])=1$ then there are no integer lattice points in the interior of $A$, so every lattice point (and hence every element of $H_2(X,\boundary X;\R)$ can be expressed as an integral combination of $[S_1]$ and $[S_2]$,

\end{proof}

Take $R$ and $T$ to be properly norm-minimizing surfaces positioned so that any cut-and-paste of $xR+yT$ is also properly norm-minimizing (for $x,y>0$) as in Proposition~\ref{cutpasteprop}. Let $S^{xy}$ denote the surface $xR+yT$.

Since $\chi(S)=\chi(S')$ and $\chi(\widehat{S})=\chi(\widehat{S'})$ for any properly norm-minizing surface $S'$ with $[S']=[S]$, to conclude that $\widehat{S}$ is norm-minimizing it would be sufficient to show that $\widehat{S'}$ is norm-minimizing. Therefore, we may take $S=S^{ab}$ to be the cut-and-paste sum $aR+bT$.

In the following sections, we will study for which $a,b$ the surface $\widehat{S}^{ab}$ may fail to be norm-minimizing. For notation, let $(M^{ab},\gamma^{ab})$ be the complementary sutured manifold to $S^{ab}$ in $X$. 

Form a fat-vertex graph $G^{ab}$ from this intersection, as in Construction~\ref{construct}. The product disks used to construct the edges of $G^{ab}$ are the disks that arise from the intersections of $R$ and $T$, as in Figure~\ref{fig:cutandpaste}. Since $[S^{ab}]=c[S]$ and $[S]$ is primitive, $G^{ab}$ has $c$ components.

\subsection{Constructing a taut foliation on $Y_{\boundary S^{ab}}(L)$ when possible}\label{sec:constructing}
\subsubsection{Assuming $c=1$}
For now, we will assume $S=S^{ab}$, so $G^{ab}$ is a connected graph. The case that $G^{ab}$ is disconnected is not much harder, but this will simplify notation. We address the general case after understanding connected graphs.

If $X=Y\setminus\nu(L)$ is reducible, then $X$ factors as $(Y'\setminus\nu(L))\# Y''$ for rational homology spheres $Y'$ and $Y''$, with $Y'\setminus\nu(L)$ irreducible. It is sufficient to prove the Theorem for $L\subset Y'$, so we may assume $X$ is irreducible. Therefore, we may apply Theorem~\ref{havefoliation1} to obtain a taut foliation $\F$ on $X$ realizing $S=S^{ab}$ as a leaf.  Restrict $\F$ to a taut foliation on the complementary sutured manifold $(M,\gamma)$ to $S$.

Say $G:=G^{ab}$ has faces  $C_1,\ldots, C_f$. Let $A_1,\ldots, A_f$ be the product annuli in $(M,\gamma)$ 
corresponding to $C_1,\ldots, C_f$, as in Construction~\ref{construct}. Let $S_{\pm}:=R_{\pm}(\gamma)$ (so if $M=\overline{X\setminus(S\times I)}$, $S_+=S\times1$ and $S_-=S\times-1$). We write $\widehat{S},\widehat{S}_+,\widehat{S}_-$ to denote the closed surfaces embedded in $\widehat{X}$ obtained from $S,S_+,S_-$ (respectively) by attaching disks (within the Dehn-filling solid tori) to each boundary component. (We have alredy defined $\widehat{S}$ this way, but we want to be clear that $\widehat{S}_+$ and $\widehat{S}_-$ are defined the same way.)

Let $(M',\gamma')$ be the result of decomposing $(M,\gamma)$ along the product annulus $A_i$. Let $V_i=A_i\times I$ be the solid torus excised in this decomposition. We name the two elements of $A(\gamma)\setminus A(\gamma')$ $B_1$ and $B_2$. Let $\F':=\F|_{M'}$. By Lemma~\ref{tautdecomp} and Remark~\ref{tautdecompfol}, $\F'$ is taut.

Recall from Remark~\ref{remarkedisks} that there are product disks in $(M',\gamma')$ connecting each element of $A(\gamma)$ to one of $B_1$ or $B_2$. 
Perform the suspension change operation (Operation~\ref{suspensionchange}) on these disks to find from $\F'$ a new taut foliation $\G$ on $(M',\gamma')$, where $\G$ restricts to a product foliation on each component of $A(\gamma')\cap A(\gamma)$. (In words, we use product disks to ``push'' the complicatedness of $\F'
$ at each element of $A(\gamma)$ onto $B_i$ instead to obtain $\G$.)

Let $(\widehat{M},\widehat{\gamma})$ be the complementary sutured manifold to $\widehat{S}$ in $\widehat{X}$.
Let $(\widehat{M'},\widehat{\gamma'})$ be the result of decomposing $(\widehat{M},\widehat{\gamma})$ along $A_i$. (The notation here is not misleading -- $(\widehat{M'},\widehat{\gamma'})$ is obtained from $(M',\gamma')$ by attaching $3$-dimensional $2$-handles to each element of $A(\gamma)\subset A(\gamma')$.)

Extend $\G$ to a taut foliation of the sutured manifold $(\widehat{M'},\widehat{\gamma'})$ by capping off leaves of $\G$ with disks.

Say $\G|_{B_1}$ is a suspension of a homeomorphism $f:I\to I$ and $\G|_{B_2}$ is a suspension of a homeomorphism $g:I\to I$.

\begin{goal}
Our current goal is to reglue $V_i$ to $\widehat{M'}$, foliated so as to extend $\G$ to a taut foliation on $\widehat{X}$ achieving $\widehat{S}$ as a leaf. 

We do not expect to always be able to fill $V_i$ and extend $\G$. A priori, we can fill $V_i$ and extend $\G$ when $f$ and $\bar{g}$ are conjugate.
\end{goal}

We will do an $I$-bundle replacement on the leaves $R_+(\widehat{\gamma'}), R_-(\widehat{\gamma'})$ of $\G$ to obtain a taut foliation $\G'$. 
Now the foliation induced by $\G'$ on the intersection of the $I$-bundle $R_\pm(\widehat{\gamma'})\times I$ with $B_j$ a suspension of some $\mu_j^\pm:I\to I$. See Figure~\ref{fig:suspension}. We have not yet specified our choice of foliation on the $I$-bundles to determine each $\mu_j^\pm$. 

We will consider some situations that allow us to choose some of the $\mu_*^*$ freely. Note $\G'|_{B_1}$ is a suspension of the concatenation $\mu_1^-f\mu_1^+$ and $\G'|_{B_2}$ is a suspension of the concatenation $\mu_2^-g\mu_2^+$. If we can choose $\mu_*^*$ so that $\mu_1^-f\mu_1^+$ and $\bar{\mu}_2^-\bar{g}\bar{\mu}_2^+$ are conjugate, then we can fill $V_i$ and extend $\G'$, proving that $\widehat{S}$ is norm-minimizing. We will use Lemma~\ref{anyhomeo} to choose the suspension homeomorphisms $\mu_{\pm}^j$ appropriately.

\begin{figure}
\begin{centering}
\labellist
\small\hair 2pt
\pinlabel $S_+$ at 425 340
\pinlabel $S_-$ at 60 40
\pinlabel $\mu_1^+$ at 655 285
\pinlabel $f$ at 660 175
\pinlabel $\mu_1^-$ at 655 85
\pinlabel $\mu_2^+$ at 1035 285
\pinlabel $g$ at 1025 175
\pinlabel $\mu_2^-$ at 1035 85
\endlabellist
{\includegraphics[width=100mm]{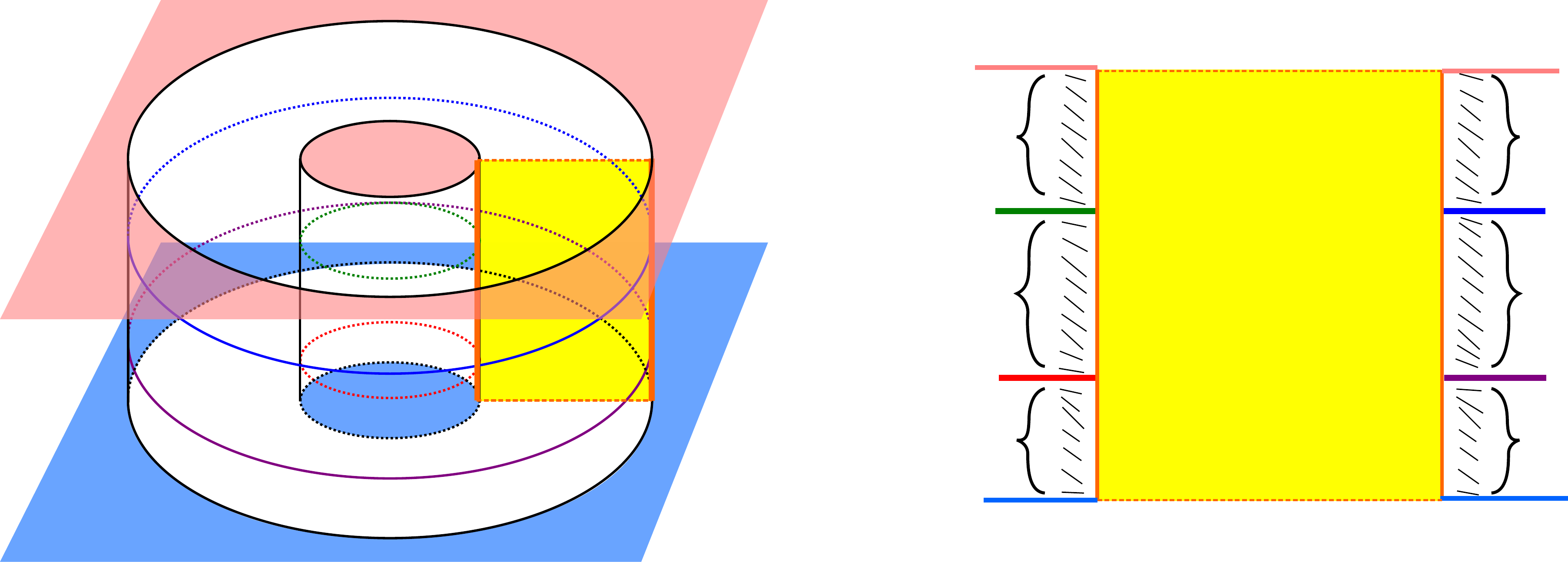}}\caption{{\bf{Left:}} the boundary of $V_i$ in $(\widehat{M'},\widehat{\gamma'})$. We highlight where we would like to glue a disk when filling $\boundary V_i$. {\bf{Right:}} The foliation $\G$ on the two annuli in $A(\gamma')$. Each annulus is subdivided into three annuli on which $\G$ induces some suspension foliation. We can fill $\boundary V_i$ by a solid torus and extend $\G$ if the concatenations $\mu_1^-f\mu_1^+$ and $\bar{\mu}_2^-\bar{g}\bar{\mu}_2^+$ are conjugate as automorphisms of $I$.}\label{fig:suspension}\end{centering}
\end{figure}

If $g(S)=1$ whenever $\widehat{S}$ is norm-minimizing, then the theorem holds automatically. So assume $g(S)>1$.

\par {\bf{Case 1. Neither $\boundary_+A$ nor $\boundary_-A$ are separating in $\widehat{S}_+,\widehat{S}_-$.}}

Then $\widehat{S}_{\pm}\setminus\nu(A)$ is positive genus, so by Lemma~\ref{anyhomeo} we may choose e.g. $\mu_+^1=\bar{g},\mu_-^2=\bar{f},\mu_+^2=\mu_-^1=\id$.

Now $\mu_-^1 f \mu_+^1 =\id f \bar{g}$ and $\bar{\mu_-^2}\bar{g}\bar{\mu_+^2}=f\bar{g}\id$ are conjugate (in fact isotopic) and hence $V_i$ can be filled to extend $\G$ to a taut foliation on $(\widehat{M},\widehat{\gamma})$, 
proving that $\widehat{S}$ is norm-minimizing.

\par {\bf{Case 2. $\boundary_+A $ is separating in $\widehat{S}_+$, but $\boundary_-A $ is non-separating in $\widehat{S}_-$.}}

(The case in which $\boundary_-A$ is separating but $\boundary^+A$ is non-separating is similar.)

Let $S_{+}'$ be the component of $S_+\setminus V$ meeting $B_1^+$  and let $S_+^2$ be the component meeting $B_2^+$. Implicitly, we choose the order of the $B_1$ and $B_2$ so that $B_1^+$ contains $\boundary S_+$.

Since $\boundary_-A $ is non-separating in $\widehat{S}_-$ (and hence in $S_-$) and $S_-$ is incompressible in $X$, $\boundary_+A $ does not bound a disk in $S_+$. Therefore, at least one of the $\widehat{S}_+^j$ (say WLOG $\widehat{S}_+^2$) is positive-genus. We then choose $\mu_+^2=\bar{f},\mu_{-}^1=\bar{g},\mu_+^1=\mu_-^2=\id$. 
Now $\mu_-^1 f \mu_+^1 =\bar{g}f\id$ and $\bar{\mu_-^2}\bar{g}\bar{\mu_+^2}=\id \bar{g}f$ are conjugate (in fact isotopic) and hence $V_i$ can be filled to extend $\G$ to a taut foliation on $(\widehat{M},\widehat{\gamma})$, 
proving that $\widehat{S}$ is norm-minimizing.

\par {\bf{Case 3. Both $\boundary_+A$ and $\boundary_-A$ are separating in $\widehat{S}_+,\widehat{S}_-$.}}

Let $S_{\pm}'$ be the component of $S_\pm\setminus V$ meeting $B_1^\pm$  and let $S_\pm^2$ be the component meeting $B_2^+$. Implicitly, we choose the order of the $B_1$ and $B_2$ so that $B_1^\pm$ contains $\boundary S_\pm$.

\par {\bf{Case 3(i). Both $S_+^1$ and $S_-^2$ have positive genus.}}
(The case in which $\widehat{S}_-^1$ and $\widehat{S}_+^2$ have positive genus is similar.)

 Choose $\mu_+^1=\overline{g},\mu_-^2=\overline{f}$, and $\mu_-^1=\mu_+^2=\id$. Now $\mu_-^1 f \mu_+^1 =\id f\bar{g}$ and $\bar{\mu_-^2}\bar{g}\bar{\mu_+^2}=f\bar{g}\id$ are conjugate (in fact isotopic). Then $\G$ extends over $V_i$ as desired, so $\widehat{S}$ is norm-minimizing.

\begin{remark}
The remaining cases describe the situation in which we cannot apply Lemma~\ref{anyhomeo} to control either of $\mu_-^1$ and $\mu_+^1$, or either of $\mu_-^2$ and $\mu_+^2$. When this is the case, we cannot immediately cause the two suspension foliations on $B_1$ and $B_2$ to be conjugate. We may then consider other values of $i$, i.e. consider other product annuli constructed from different faces of the fat-vertex graph $G$. If any annulus yields the situation of Case 1, 2, or 3(i), then $\widehat{S}$ is norm-minimizing. When every annulus fails to yield a situation of a previous case, then we may still consider whether each annulus exhibits similar behavior.
\end{remark}

\par {\bf{Case 3(ii). For every choice of $i$, $S_+^2$ is a disk.}}

Since $S$ is incompressible, when $S_+^2$ is a disk it must be the case that $S_-^2$ is a disk. Note that $A_i\cup($disks in $S_\pm^2)$ is a $2$-sphere. Decomposing $(\widehat{M},\widehat{\gamma})$ along all $A_1,\ldots, A_f$ yields a sutured manifold $(\nu(G)\times I,(\boundary\nu(G))\times I)\sqcup(N,\eta)$, where $(N,\eta)$ is potentially disconnected. Moreover, $\boundary N$ is a collection of $2$-spheres which each contain one element of $A(\eta)$. The sutured manifold $(\nu(G)\times I,(\boundary\nu(G))\times I)$ is taut (since it is a product) and $(N,\eta)$ is taut (since $R_\pm(\eta)$ is norm-minimizing), so we conclude by Lemma~\ref{tautdecomp} that $(\widehat{M},\widehat{\gamma})$ is taut. Thus, $\widehat{S}$ is norm-minimizing.

\begin{remark}
If $Y=S^3$, then in Case 3(ii) we may actually conclude that $\widehat{S}$ is a fiber surface by~\cite[Theorem 3.14]{generaoflinks}.
\end{remark}

Thus, we consider the last remaining possibility.

\par {{\bf{Case 3(iii).}} For every choice of $A_i$, either $S_+^1$ is genus-zero or both $S_\pm^2$ are disks, and we are not in Case 3(ii).} 
In this case, we either cannot control $\mu_\pm^1$ or cannot control $\mu_\pm^2$.

Let $\Sigma_1,\ldots,\Sigma_f$ be the regions of $S_+$ bounded by $\boundary_+ A_i$ which do not meet $\boundary S_+$. These regions are disjoint. 

Some $\Sigma_j$ must be positive-genus, since we are not in Case 3(ii). By assumption, this means that $\cup_{k\neq j}\Sigma_k\subset S_+\setminus\Sigma_j$ is genus-zero. Therefore, $g(S)=g(\Sigma_j)$.

If $\widehat{S}$ is not norm-minimizing, then the product annuli corresponding to faces of $G$ must be as in this Case (3(iii)). As a shorthand, we will just say that ``$S$ is type 3(iii)'' or ``$[S]$ is type 3(iii).''

\subsection{When $G^{ab}$ is disconnected}
Now suppose $c[S]=a[R]+b[T]$, so $S^{ab}$ has $c$ components. Take $S$ to be one of these components. Now the complementary sutured manifold to $\widehat{S}^{ab}$ has $c$ components. Of these, $(c-1)$ are product sutured manifolds (these are the regions between parallel copies of $S$). The other component is a copy of the complementary sutured manifold $(\widehat{M},\widehat{\gamma})$ to $\widehat{S}$. Let $G$ be the component of $G^{ab}$ whose vertices and edges correspond to elements of $A(\widehat{\gamma})$ and product disks in $(\widehat{M},\widehat{\gamma})$. Then repeat the analysis of Subsection~\ref{sec:constructing} using $G$ in the place of $G^{ab}$. Again, we find that $(\widehat{M},\widehat{\gamma})$ is taut as long as it is not the case that for each of the product annuli $A_i$ corresponding to a face of $G$, $\boundary_\pm A_i$ bounds either a disk or a surface of genus-$g(S)$ in $S_\pm$. In this case in which we do not know that $(\widehat{M},\widehat{\gamma})$ is taut, we again say ``S is type 3(iii)'' or ``$[S]$ is type 3(iii)'' as a convenient shorthand.

\subsection{Limiting Case 3(iii)}\label{sec:limiting}

Now we will understand which homology classes in $S$ may be type 3(iii).

Recall that if $[S]\in H_2(X,\boundary X;\R)\setminus E$ is primitive, then we can take a properly norm-minimizing surface $S$ to be $S=S^{ab}:=aR+bT$ for some positive coprime $a,b\in\R$, where $[R],[T]\in E$ are corners of the Thurston norm on $X$.

\begin{definition}
Say a component of $P_i\setminus(S^{ab})$ is a {\emph{corner}} if it does not lie between two parallel copies of $R$ or $T$. Note an annulus in corresponding to a face of $G^{ab}$ is parallel to $P_i$ either only in corners or never in corners. If such an annulus is parallel to $P_i$ only in corners, we call the annulus a {\emph{corner annulus}}. Otherwise, we call the annulus a {\emph{noncorner annulus}}. See Figure~\ref{fig:corners}.

\end{definition}

\begin{figure}
\begin{centering}
\labellist
\small\hair 2pt
\pinlabel $\textcolor{blue}{\boundary T}$ at 45 100
\pinlabel $\textcolor{red}{\boundary R}$ at 82 40
\endlabellist
{\includegraphics[width=110mm]{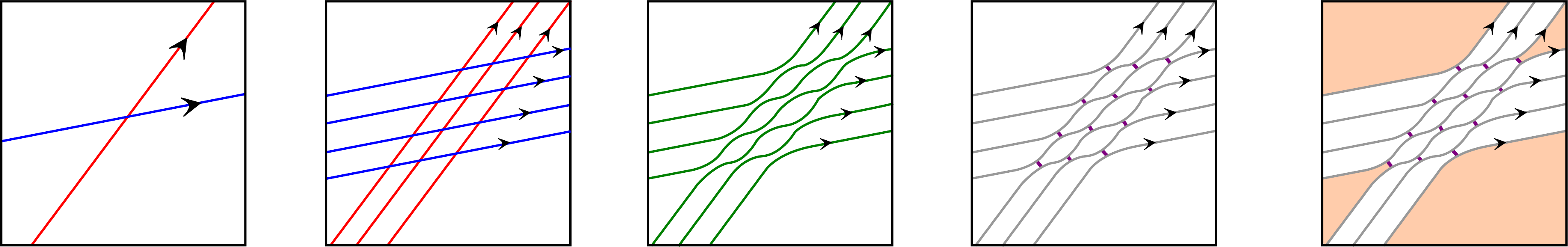}}
\caption{{\bf{First:}} On $P_i$, an intersection of $\boundary R$ and $\boundary T$, where $R$ and $T$ are properly norm-minimizing surfaces. {\bf{Second}}: One intersection of $\boundary R$ and $\boundary T$ gives rise to $ab$ intersections of $\boundary aR$ and $\boundary bT$. {\bf{Third:}} On $P_i$, $\boundary(aR+bT)$. {\bf{Fourth:}} We indicate product disks in the complementary sutured manifold to $aR+bT$ arising from the cut-and-paste construction. Specifically, we draw the intersection of these disks with $P_i$. {\bf{Fifth:}} We shade the visible regions of $P_i$ which are corners. Annuli corresponding to faces of $G^{ab}$ which are somewhere parallel to these regions are called corner annuli.}\label{fig:corners}
\end{centering}
\end{figure}

\begin{proposition}\label{mingenusprop}

If $S=S^{ab}$ is type 3(iii), then $g(S)\le g(S^{xy})$ for all $x,y$.
\end{proposition}

\begin{proof}
Say the annuli corresponding to faces of $G^{ab}$ bound regions $\Sigma_1,\ldots,\Sigma_n$ in the interior of $S_+$. 
Note that $S^{xy}$ contains subregions homeomorphic to each $\Sigma_i$, so $g(\Sigma_i)\le g(S^{xy})$. Since S is type 3(iii), for some $i$ we have $g(\Sigma_i)=g(S)$. Therefore, $g(S)\le g(S^{xy})$.

\end{proof}

Let $c_{xy}$ denote the number of components of $S^{xy}$ for $x,y$ positive integers. (Since $[S]$ is primitive and $[S^{ab}]=c[S]$, $c_{ab}=c$.)

\begin{proposition}
If $[S^{ab}]$ is type 3(iii), then $\frac{1}{c_{ab}}g(S^{ab})\le\frac{1}{c_{xy}}g(S^{xy})$ for all coprime $x,y>0$.
\end{proposition}
\begin{proof}
Say the annuli corresponding to faces of $G^{ab}$ bound regions $\Sigma_1,\ldots,\Sigma_n$ in the interior of $S_+$, where by $S_+$ we mean the component of $S^{ab}_+$ which is not between parallel copies of $S$. Because $x,y>0$, $S^{xy}$ contains subregions homeomorphic to each $\Sigma_i$, so $g(\Sigma_i)\le \frac{1}{c_{xy}}g(S^{xy})$. Since $S^{ab}$ is type 3(iii), for some $i$ we have $g(\Sigma_i)=\frac{1}{c_{ab}}g(S^{ab})$. Therefore, $\frac{1}{c_{ab}}g(S^{ab})\le\frac{1}{c_{xy}}g(S^{xy})$.

\end{proof}

Thus, if $\widehat{S}$ is not norm-minimizing then either $g(S)=1$, or it is the case that $g(S)\le g(\alpha)$ for every $\alpha$ in the face of the Thurston norm which is spanned by $[R]$ and $[T]$. This completes the proof of Theorem~\ref{mingenuscor}.

\end{proof}

\begin{corollary}\label{scholium}
In Theorem~\ref{2compthm}, if $[L_1]=[L_2]=0\in H_1(Y;\Z)$ and $\lk:=\lk(L_1,L_2)=\pm1$ and every face of the unit-norm ball of the Thurston norm $x$ on $Y\setminus\nu(L)$ has determinant $1$, then $\widehat{S}$ is norm-minimizing if $[S]$ is not a vertex of $x$ or a bisector of a face of $x$.	
\end{corollary}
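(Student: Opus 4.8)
The plan is to read this off Theorem~\ref{mingenuscor} by making the genus function on each face completely explicit. Since $b_2(X)=2$, the unit ball of $x$ is a polygon and every face is an edge spanned by two adjacent corners $[R],[T]$; a primitive class that is not a vertex lies in the interior of a unique such face, which is exactly the situation to which Theorem~\ref{mingenuscor} applies (vertices are excluded precisely because that theorem assumes $[S]$ is not a corner). By Theorem~\ref{mingenuscor}, if $\widehat{S}$ is not norm-minimizing then either $g(S)=1$ or $g([S])$ is minimal among all classes in that face. So the whole task is to show, under the three hypotheses, that the only interior primitive class realizing either alternative is the bisector $[R]+[T]$.

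First I would pin down the integral lattice. Let $\alpha_1,\alpha_2$ be the classes of punctured Seifert surfaces for $L_1,L_2$. The hypothesis $[L_1]=[L_2]=0$ gives $m_1=m_2=1$, so $\boundary\alpha_1=\lambda_1+\lk\,\mu_2$ and $\boundary\alpha_2=\lk\,\mu_1+\lambda_2$ in $H_1(P_1)\oplus H_1(P_2)$, where $\mu_i,\lambda_i$ are meridian and longitude. For an integral class written as $s\alpha_1+t\alpha_2$ with $s,t\in\Q$, the coefficients of $\boundary(s\alpha_1+t\alpha_2)$ in these bases are $s,t\lk,s\lk,t$; integrality of $\boundary$ together with $\lk=\pm1$ forces $s,t\in\Z$. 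As $\boundary$ is injective (here $\lk\neq0$ is used), $\alpha_1,\alpha_2$ form a basis of $H_2(X,\boundary X;\Z)$ modulo torsion. The determinant-$1$ hypothesis then makes $[R],[T]$ a basis as well by Proposition~\ref{primitive}, so every primitive interior class is exactly $a[R]+b[T]$ with $a,b$ coprime positive integers.

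Next I would compute boundary components and genus. For $\beta=P\alpha_1+Q\alpha_2$ the boundary is $P\lambda_1+Q\lk\,\mu_1$ on $P_1$ and $P\lk\,\mu_2+Q\lambda_2$ on $P_2$, each consisting of $\gcd(P,Q)$ parallel circles (again using $\lk=\pm1$). For a primitive $\beta$ this gcd is $1$, so $\sum_i|\langle\beta,P_i\rangle|=2$, and with linearity of $x$ on the cone $C$,
\[ g(\beta)=\tfrac12\big(2+x(\beta)-2\big)=\tfrac12 x(\beta)=\tfrac12\big(a\,x([R])+b\,x([T])\big). \]
In particular, applying this at the vertices gives $x([R])=2g([R])$ and $x([T])=2g([T])$, which are even and, by nondegeneracy, at least $2$. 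Hence every primitive interior class has $g(\beta)\ge\tfrac12(x([R])+x([T]))\ge2$, so the alternative $g(S)=1$ \emph{cannot} occur in a face interior. Moreover
\[ a\,x([R])+b\,x([T])-\big(x([R])+x([T])\big)=(a-1)x([R])+(b-1)x([T])\ge0, \]
with equality iff $a=b=1$, so $g$ is minimized over the face uniquely at the primitive class $[R]+[T]$, the bisector. Thus by Theorem~\ref{mingenuscor}, if $[S]$ is primitive, is neither a vertex nor the bisector of its face, and $\widehat{S}$ is not norm-minimizing, we reach a contradiction; this is exactly the corollary.

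The hardest part is the bookkeeping of the second and third paragraphs: establishing that $\alpha_1,\alpha_2$ (hence $[R],[T]$) form a genuine integral basis, and that every primitive interior class meets each $P_i$ in a single circle, so that the genus reduces to the clean linear expression $\tfrac12 x(\beta)$. Everything after that is elementary optimization, and the key simplification—that the genus-$1$ case is vacuous—comes from the parity fact $x([R])=2g([R])$ forced by each vertex being primitive with exactly two boundary circles under these hypotheses.
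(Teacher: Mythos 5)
Your proposal is correct and follows essentially the same route as the paper's own proof: use the determinant-$1$ hypothesis to write $[S]=a[R]+b[T]$ with $a,b$ coprime positive and $(a,b)\neq(1,1)$, use $\lk=\pm1$ to see each primitive class has exactly two boundary circles so that $g(\beta)=\tfrac12 x(\beta)$, and then rule out both alternatives of Theorem~\ref{mingenuscor} (genus $1$, and genus-minimality in the face, which fails since $g(S)>g(R+T)$). The paper states this more tersely, but the logic, including the role of nondegeneracy, is the same as yours.
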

\begin{proof}
Take $[S]$ not a corner of $x$ nor a bisector of a face of $x$. Since $\det(aR,bT)=1$, we may take $S=aR+bT$ for some adjacent corners $[R],[T]$ and positive coprime integers $a,b$ with $(a,b)\neq(1,1)$. Because $|\lk|=1$, $\boundary S$ has two components. This, together with nondegeneracy of $x$, implies $g(S)>g(R+T)$. Moreover, $\chi(S)=a\chi(R)+b\chi(T)\le -3$, so $g(S)>1$. Then by Theorem~\ref{mingenuscor}, $\widehat{S}$ must be norm-minimizing.
\end{proof}

Now we give bounds on $|E|$ in Theorem~\ref{2compthm}. In Corollaries~\ref{cor1} and~\ref{cor2}, the actual upper bounds given on $|E|$ could be improved with some mild arithmetic or by investigating the geometry of a specific Thurston unit-norm ball. Here we just give some easy bounds to make clear that a bound on $|E|$ can be made explicit.

\begin{corollary}\label{cor1}
In Theorem~\ref{2compthm}, if $[L_1]=[L_2]=0\in H_1(Y;\Z)$, then we may take $E$ to contain at most $[n_\mu+\sum_{i=1}^{n_x}(\det(\alpha_i,\alpha_{i+1})-1)](1+{{|\lk|}\choose{2}})$ elements, where $n_x$ is the number of corners of the Thurston norm $x$ on $X$, $\alpha_1,\ldots,\alpha_{n_\mu}$ are the corners of $x$ in cyclic order (taking $\alpha_{n_\mu+1}:=\alpha_1$), and $\lk=\lk(L_1,L_2)$.
\end{corollary}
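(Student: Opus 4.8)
The plan is to convert the finiteness already established (Theorem~\ref{mingenuscor} together with Proposition~\ref{g0prop}) into an explicit census of $E$. First I would record that, by Theorem~\ref{mingenuscor}, every element of $E$ is either a corner of $x$ or a primitive class lying in the cone on the interior of some face; and that within a single face all such face-interior elements of $E$ share one common value of the genus $g([S])$ (the minimal genus in that face, unless that common value is $1$). Thus $E$ splits as the $n_\mu$ corners together with, for each of the faces spanned by consecutive corners $\alpha_i,\alpha_{i+1}$, a set of primitive classes all of a single fixed genus $g_0$. It then remains only to bound, face by face, the number of primitive classes of that one fixed genus.

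For the per-face count I would fix the face spanned by adjacent corners $\alpha_i,\alpha_{i+1}$ and work in the punctured-Seifert coordinates $[S]=p\alpha_1+q\alpha_2$, with $\gcd(p,q)=1$. The hypothesis $[L_1]=[L_2]=0$ forces $m_1=m_2=1$, so the boundary computation from the proof of Proposition~\ref{g0prop} collapses to $|\boundary S|=\gcd(p,\lk)+\gcd(q,\lk)$, whence (using coprimality of $p,q$) one has $2\le|\boundary S|\le|\lk|+1$. Rewriting the genus formula as $x([S])=2g([S])-2+|\boundary S|$ and using that $x$ is linear on the cone over the face, the requirement $g([S])=g_0$ confines the bad classes to the bounded band $2g_0\le x([S])\le 2g_0+|\lk|-1$ inside that cone. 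Counting the primitive lattice points of this band is then a Pick's-theorem computation of exactly the kind used in Proposition~\ref{primitive}: the determinant $\det(\alpha_i,\alpha_{i+1})$ measures the fineness of the integral lattice relative to the basis $\{\alpha_i,\alpha_{i+1}\}$, contributing the $\det(\alpha_i,\alpha_{i+1})-1$ interior lattice points, while the width $|\lk|-1$ of the band in the $|\boundary S|$ direction contributes the crude multiplicity $1+\binom{|\lk|}{2}$.

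Finally I would assemble the estimate by summing the $\sum_{i=1}^{n_x}(\det(\alpha_i,\alpha_{i+1})-1)$ face contributions, adding the $n_\mu$ corners, and multiplying through by the per-direction multiplicity $1+\binom{|\lk|}{2}$. The hard part is precisely this last multiplicity: Theorem~\ref{mingenuscor} guarantees only that the face-interior bad classes share a single genus, not that there is at most one of them, and since $|\boundary S|$ is a nonlinear ($\gcd$) function of $(p,q)$, the fixed-genus condition does not cut out a single lattice point but a short union of parallel level segments indexed by the achievable values of $(\gcd(p,\lk),\gcd(q,\lk))$. Bounding the primitive lattice points across these segments is where the $\lk$-dependence genuinely enters, and I would handle it crudely — estimating the admissible boundary-slope pairs and the lattice points at each level — which already yields an explicit bound, even though, as remarked immediately after the statement, the constant $1+\binom{|\lk|}{2}$ can be sharpened by more careful arithmetic.
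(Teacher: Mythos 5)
Your first paragraph and your boundary computation are consistent with the paper: bad classes are corners or face-interior primitive classes of genus $1$ or of the minimal genus $g_0$ of the face, and when $[L_1]=[L_2]=0$ one has $|\boundary S|=\gcd(p,\lk)+\gcd(q,\lk)\le|\lk|+1$, so such classes are confined to the norm band $2g_0\le x([S])\le 2g_0+|\lk|-1$ (or $2\le x([S])\le |\lk|+1$ in the genus-one case). The gap is in the counting step. The number of primitive lattice points in that band is \emph{not} controlled by $(\det(\alpha_i,\alpha_{i+1})-1)\left(1+\binom{|\lk|}{2}\right)$: the band is a trapezoid sitting at norm level $2g_0$, and its area (hence its lattice-point count, which is what Pick's theorem measures) grows with $g_0$ and with the reciprocals of $x(\alpha_i)$ and $x(\alpha_{i+1})$ --- quantities that appear nowhere in the stated bound. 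Pick's theorem, as used in Proposition~\ref{primitive}, counts lattice points of the \emph{fundamental parallelogram} of $\alpha_i,\alpha_{i+1}$; it says nothing about a norm band far from the origin. What actually cuts the bad set down to size is not band membership but the genus equation $x([S])-\gcd(p,\lk)-\gcd(q,\lk)=2g_0-2$, and bounding the solutions of that equation is exactly the step you defer to a ``crude'' estimate in your last paragraph without supplying an argument. So the key quantitative step of the corollary is missing.

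The paper closes this gap by running the logic in the opposite direction: it \emph{constructs} $E$ rather than enumerating the fixed-genus set. Take $E_0$ to be the corners together with all integral classes in the parallelogram spanned by each adjacent pair $\alpha_i,\alpha_{i+1}$ (this is where $\sum_i(\det(\alpha_i,\alpha_{i+1})-1)$ enters --- these are extra generators, not band points); then any primitive $[S]\notin E_0$ can be written as $aR+bT$ with $[R],[T]$ adjacent in $E_0$ and $a,b\ge1$. Linearity of $x$ on the cone and nondegeneracy give $2g(S)-2+|\boundary S|=a\,x([R])+b\,x([T])\ge a+b$, so if $a+b>|\lk|+1\ge|\boundary S|$ then $g(S)>1$ and also $g(S)>g(R+T)\ge g_0$; by Theorem~\ref{mingenuscor}, $\widehat{S}$ is then norm-minimizing. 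Hence every possibly-bad class satisfies $a+b\le|\lk|+1$, and the factor $1+\binom{|\lk|}{2}$ appears as the (crude) count of coefficient pairs $(a,b)$ per adjacent pair of generators, not as a measure of band width. This coefficient bound --- fixed genus forces bounded $a+b$ relative to adjacent generators --- is precisely the lemma your sketch needs and does not prove; once it is in hand, the band and level-set picture becomes unnecessary.
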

\begin{proof}
We take $E$ to contain the corners $\alpha_1,\ldots,\alpha_{n_x}$ of $x$. Then we add any integral classes in the parallelogram spanned by $\alpha_i,\alpha_{i+1}$; this consists of exactly $\det(\alpha_i,\alpha_{i+1})-1$ elements. Now if $[S]\not\in E$ then we can take $S=aR+bT$ for some $[R],[T]\in E$. We have $|\boundary S|\le|\lk|+1$, so for $a+b>|\lk|+1$ we have $g(S)>g(R+T),1$. Now let $F=\{a\beta_1+b\beta_2\mid \beta_1,\beta_2$ are cyclically adjacent in $E$ with $a,b\ge1$ and $a+b\le |\lk|+1$. The set $F$ contains $|E|{{|\lk|}\choose{2}}$ elements. Let $E:=E\cup F$.
\end{proof}

\begin{corollary}\label{cor2}
In Theorem~\ref{2compthm}, if $m_1,m_2$ are the smallest positive integers with $m_1[L_1]=m_2[L_2]=0\in H_1(Y;\Z)$, then we may take $E$ to contain at most $[n_x+\sum_{i=1}^{n_x}(\det(\alpha_i,\alpha_{i+1})-1)](1+{{C-1}\choose{2}})$ elements, where $n_x$ is the number of corners of the Thurston norm $x$ on $X$, $\alpha_1,\ldots,\alpha_{n_x}$ are the corners of $x$ in cyclic order (taking $\alpha_{n_x+1}:=\alpha_1$), and $C=2|\lk(L_1,L_2)|m_1^2m_2^2$.
\end{corollary}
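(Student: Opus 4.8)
The plan is to run the argument of Corollary~\ref{cor1} essentially verbatim, with the single change that the sharp boundary bound $|\boundary S|\le|\lk(L_1,L_2)|+1$ used there (valid only because $m_1=m_2=1$) is replaced by the general bound furnished by the proof of Proposition~\ref{g0prop}, namely $|\boundary S|\le 2|\lk(L_1,L_2)|\,m_1^2m_2^2=C$. So the corollary is really a transcription of Corollary~\ref{cor1} with $|\lk|+1$ upgraded to $C$, and the content is to check that this weaker bound propagates correctly.

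First I would build the candidate set $E$ exactly as in Corollary~\ref{cor1}: place the corners $\alpha_1,\ldots,\alpha_{n_x}$ of $x$ into $E$, and for each cyclically adjacent pair $\alpha_i,\alpha_{i+1}$ adjoin the $\det(\alpha_i,\alpha_{i+1})-1$ interior lattice points of the parallelogram they span. As in that proof, this guarantees that every primitive class $[S]$ in the interior of a face can be written as $[S]=a\beta_1+b\beta_2$ with $\beta_1,\beta_2$ adjacent elements of $E$ and $a,b$ coprime positive integers. After this step $|E|=n_x+\sum_{i=1}^{n_x}(\det(\alpha_i,\alpha_{i+1})-1)$, the quantity appearing in the claimed bound.

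Next I would choose properly norm-minimizing representatives $R,T$ of $\beta_1,\beta_2$ via Proposition~\ref{cutpasteprop} so that $S=aR+bT$ is properly norm-minimizing, and combine the genus formula $g(S)=\tfrac12\big(2+x(S)-|\boundary S|\big)$ with the two inputs $x(S)=a\,x(\beta_1)+b\,x(\beta_2)\ge a+b$ (linearity on the face plus nondegeneracy) and $|\boundary S|\le C$. The point is that once $a+b$ is large compared with $C$, both $g(S)>1$ and $g(S)>g(\beta_1+\beta_2)$ hold; since $\beta_1+\beta_2$ lies in the interior of the same face, $g([S])$ is then neither equal to $1$ nor minimal in its face, so Theorem~\ref{mingenuscor} forces $\widehat{S}$ to be norm-minimizing and such $[S]$ may be omitted from $E$. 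Carrying out the comparison pins the surviving range at $a+b\le C-1$.

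Finally I would count: for each of the $|E|$ cyclically adjacent pairs $\beta_1,\beta_2$ in $E$, the classes $a\beta_1+b\beta_2$ with $a,b\ge 1$ and $a+b\le C-1$ number at most $\binom{C-1}{2}$, and adjoining all of these to the $|E|$ elements already present yields $|E|\big(1+\binom{C-1}{2}\big)$, which is exactly the stated bound. The only genuine work beyond Corollary~\ref{cor1} is confirming that the looser bound $C$ threads through the genus estimate; I expect the main (and only minor) obstacle to be bookkeeping the precise constant in the $a+b$ threshold, since the genus inequalities and the boundary bound determine it only up to a small additive shift, so landing the count exactly at $\binom{C-1}{2}$ requires care about when the two genus comparisons become strict. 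As the author notes, any slack here merely loosens the explicit bound and can be absorbed with mild arithmetic.
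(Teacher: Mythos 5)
Your proposal is correct and follows essentially the same route as the paper: the paper's proof is precisely Corollary~\ref{cor1} rerun with the bound $|\boundary S|\le C$ from Proposition~\ref{g0prop} in place of $|\lk|+1$, using the same initial set of corners plus parallelogram lattice points, the same genus estimate to invoke Theorem~\ref{mingenuscor} for large $a+b$, and the same count. The only discrepancy is your threshold $a+b\le C-1$ versus the paper's $a+b\le C$ (at $a+b=C$ one only gets $g(S)\ge 1$, not $g(S)>1$), but this is exactly the additive-constant slack that both you and the paper explicitly acknowledge can be absorbed by mild arithmetic.
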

\begin{proof}
We take $E$ to contain the corners $\alpha_1,\ldots,\alpha_{n_x}$ of $x$. Then we add any integral classes in the parallelogram spanned by $\alpha_i,\alpha_{i+1}$; this consists of exactly $\sum_{i=1}^{n_x}\det(\alpha_i,\alpha_{i+1})-1$ elements. Recall from~\ref{g0prop} that $|\boundary S|\le C$, so for $a+b>C$ we have $g(S)>g(R+T),1$.  Now let $F=\{a\beta_1+b\beta_2\mid \beta_1,\beta_2$ are cylically adjacent in $E$ with $a,b\ge1$ and  $a+b\le C\}$. The set $F$ contains ${{C-1}\choose{2}}|E|$ elements. Let $E:=E\cup F$.
\end{proof}

\section{Links with many components}\label{sec:ncomp}

Now we consider links of more than two components.

\begin{mingenuscor2}
Let $L=L_1\sqcup L_n$ be an $n$-component link in a rational homology $3$-sphere $Y$. Assume $\lk(L_i,L_j)\neq 0$ for each $i\neq j$. Let $X:=Y\setminus\nu(L)$. Assume $X$ has nondegenerate Thurston norm and is atoroidal and anannular.

Let $S$ be a norm-minimizing surface so that $[S]$ is primitive and is contained in a cone $C$ on the interior of a face of the unit-ball of $x$. 

Let $\widehat{X}$ be the closed manifold obtained by Dehn-filling each boundary component of $X$ according to the slope of $\boundary S$. Let $\widehat{S}$ be the closed surface in $\widehat{X}$ obtained from $S$ by capping off each boundary component of $S$ by a disk in a Dehn-filling solid torus. Then at least one of the following is true:
\begin{itemize}
\item $g(S)=1$,
\item $\widehat{S}$ is norm-minimizing,
\item $g(S)\le g(\beta)$ whenever $\beta\in C$.
\end{itemize}
\end{mingenuscor2}

\begin{proof}
Assume $g(S)>1$ and that $\widehat{S}$ is not norm-minimizing. Let $S'$ be a properly norm-minimizing surface in $X$ with $[S']\in C$. Then there exist adjacent corners $[R],[T]$ of $x$ so that $c[S]=a[R]+b[T],c'[S']=a'[R]+b'[T]$ for some positive integers $a,b,c,a',b',c'$. Take $R$ and $T$ to be properly norm-minimizing, and take $cS=aR+bT, c'S'=a'R+b'T$. 

If $\boundary R$ and $\boundary T$ meet some components $P_i$ of $\boundary X$ in the same slope $q$ or one in slope $q$ and the other not at all, then $S$ and $S'$ meet $P_i$ in slope $q$. Let $\widetilde{X}$ be obtained from $X$ by filling $P_i$ with slope $q$, and let $\widetilde{S},\widetilde{S}',\widetilde{R},\widetilde{T}$ be the corresponding capped surfaces in $\widetilde{X}$. By Theorem~\ref{fillonethm2}, $\widetilde{S}$ is norm-minimizing. Moreover, if $R$ and $T$ meet $P_i$ in $b_R,b_T$ curves correspondingly, then $c\chi(\widetilde{S})=\chi(S)+ab_R+bb_T=a(\chi(R)+b_R)+b(\chi(T)+b_T)=a\chi(\widetilde{R})+b\chi(\widetilde{T})$. Since $\widetilde{S}$ is norm-minimizing, both $\widetilde{R}$ and $\widetilde{T}$ must be norm-minimizing. Similarly, $c'\chi(\widetilde{S})=a'\chi(\widetilde{R})+b'\chi(\widetilde{T})$. Then by convexity of Thurston norm, $\widetilde{S}'$ is norm-minimizing. Now let $X:=\widetilde{X},R:=\widetilde{R},T:=\widetilde{T},S:=\widetilde{S}, S':=\widetilde{S}'$, $n:=n-1$. Continue until both $\boundary R$ and $\boundary T$ meet every component of $\boundary X$, and always do so in distinct slopes.

Let $G$ be the fat-vertex graph describing the intersection of $aR$ and $bT$, as in Construction~\ref{construct}. By the construction of Subsection~\ref{sec:constructing}, $[S]$ must be type 3(iii). That is, there must be a product annulus $A$ (in the complementary sutured manifold to $S$) corresponding to a face of $G$ so that $\boundary_+(A)$ bounds a genus-$g(S)$ region $\Sigma$ of $\int{S}_+$. But since $a',b'>0$, the cut-and-paste $c'S'=a'R+b'T$ must contain a subregion homeomorphic to $\Sigma$. Therefore, $g(S')\ge g(S)$.
\end{proof}

Finally, we are ready to induct on Theorem~\ref{2compthm}.

\begin{ncompthm}
Let $L$ be an $n$-component link $(n>1)$ in a rational homology $3$-sphere $Y$ with components $L_1,\ldots, L_n$. Assume $\lk(L_i,L_j)\neq 0$ for $i\neq j$ and $X:=Y\setminus\nu(L)$ has nondegenerate Thurston norm.

Let $S$ be a properly norm-minimizing surface in $X$ meeting every component of $\boundary X$ . Let $\widehat{X}$ be the closed manifold obtained from $X$ by Dehn filling $X$ according to $\boundary S$, and let $\widehat{S}\subset\widehat{X}$ be the closed surface obtained from capping off each boundary component of $S$ within the Dehn-filling solid tori.

Let $\widetilde{Y}$ be the $3$-manifold obtained from $Y$ by surgering $Y$ along $L_3\sqcup\cdots\sqcup L_n$ according to $\boundary S$.

There exists an $(n-2)$-dimensional set of rays $E$ from the origin of $H_2(X,\boundary X;\R)\cong\R^n$ so that if $[S]\not\in E$, then either $\widehat{S}$ is norm-minimizing or $\widetilde{Y}\setminus\nu(L_1\sqcup L_2)$ has degenerate Thurston norm.
\end{ncompthm}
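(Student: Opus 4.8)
The plan is to prove Theorem~\ref{ncompthm} by induction on the number of components $n$, with Theorem~\ref{2compthm} serving as the base case $n=2$: there $\widetilde{Y}=Y$, no surgery is performed, the standing nondegeneracy hypothesis makes the second alternative vacuous, and the promised $(n-2)=0$-dimensional set of rays is precisely the finite set $E$ produced by Theorem~\ref{2compthm}. For the inductive step I would first Dehn-fill the single component $L_n$ along the slope $\boundary S\cap P_n$, obtaining a rational homology sphere $Y'$ containing the $(n-1)$-component link $L_1\sqcup\cdots\sqcup L_{n-1}$. Provided $[S]$ is not a corner of the Thurston norm on $X$ (the finitely many corner rays are thrown into $E$ at the start, contributing only a $0$-dimensional piece), Theorem~\ref{fillonethm2} guarantees that the capped surface $S'$ remains properly norm-minimizing in $X':=Y'\setminus\nu(L_1\sqcup\cdots\sqcup L_{n-1})$.

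With $S'$ norm-minimizing in $X'$, I would apply the inductive hypothesis to $L_1\sqcup\cdots\sqcup L_{n-1}\subset Y'$. The escape clauses match up for free: surgering $Y'$ along $L_3\sqcup\cdots\sqcup L_{n-1}$ reproduces exactly $\widetilde{Y}$, so the inductive alternative ``$\widetilde{Y}\setminus\nu(L_1\sqcup L_2)$ has degenerate Thurston norm'' is literally the alternative in the conclusion of Theorem~\ref{ncompthm}. The inductive hypothesis supplies an $(n-3)$-dimensional set of exceptional rays $E'\subset H_2(X',\boundary X';\R)\cong\R^{n-1}$; pulling $E'$ back under the ``fill $L_n$'' correspondence and letting the slope $\boundary S\cap P_n$ vary over its one-parameter family sweeps $E'$ into an $((n-3)+1)=(n-2)$-dimensional set of rays in $H_2(X,\boundary X;\R)\cong\R^n$, as required.

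To invoke the inductive hypothesis I must check that $L_1\sqcup\cdots\sqcup L_{n-1}\subset Y'$ meets its standing hypotheses. Writing $[S]=\sum_i c_i\alpha_i$ with $\alpha_i$ the class of a punctured Seifert surface for $L_i$, the slope $\boundary S\cap P_n$ has meridional coefficient $\sum_{i\neq n}c_i\lk(L_i,L_n)$; the surgery on $L_n$ keeps $Y'$ a rational homology sphere exactly when this coefficient is nonzero, a single hyperplane condition that I absorb into $E$ as an $(n-2)$-dimensional locus. The vanishing of a surgered linking number $\lk_{Y'}(L_i,L_j)$ is likewise one linear condition on the surgery data, cutting out another $(n-2)$-dimensional locus that I add to $E$. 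Thus all of the contributions to $E$ are at most $(n-2)$-dimensional.

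The main obstacle is tracking nondegeneracy of the Thurston norm through the tower of fillings: the inductive hypothesis requires $X'$ to be nondegenerate, but this is not guaranteed, and degeneracy can in principle appear or disappear under Dehn filling. The cleanest way I see to avoid this is to reorganize the argument so that intermediate complements never need to be nondegenerate: fill all of $L_3,\ldots,L_n$ simultaneously by a single application of Theorem~\ref{fillonethm2} (with the torus count $k=n-2$), producing a norm-minimizing $\widetilde{S}$ in $\widetilde{X}:=\widetilde{Y}\setminus\nu(L_1\sqcup L_2)$ directly from the nondegeneracy of $X$ alone. Then a clean dichotomy finishes the proof: if $\widetilde{X}$ is degenerate we are already in the second alternative and have nothing to prove, while if $\widetilde{X}$ is nondegenerate we apply Theorem~\ref{2compthm} to $L_1\sqcup L_2\subset\widetilde{Y}$ and place its finite exceptional set into $E$. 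Sweeping this finite set over the $(n-2)$-dimensional space of filling slopes on $P_3,\ldots,P_n$ yields the claimed $(n-2)$-dimensional $E$, and the only remaining care is to confirm that this union of fiberwise-finite sets is genuinely $(n-2)$-dimensional.
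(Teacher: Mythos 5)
Your proposal is correct, and its final form genuinely differs from the paper's proof in a way that improves it. The paper proves Theorem~\ref{ncompthm} exactly by the induction sketched in your first two paragraphs: fill $P_n$ first, put into $E$ the corners of the norm, the classes whose slope on $P_n$ is the rational-longitude slope (so that the surgered manifold stays a rational homology sphere), and the classes whose slope kills a surgered linking number $\lk_{ij}-q\lk_{in}\lk_{jn}$, and then sweep the inductive $(n-3)$-dimensional exceptional set of $H_2(X_q,\boundary X_q;\R)$ over the one-parameter family of slopes $q$ to get an $(n-2)$-dimensional set of rays. The obstacle you isolate is real: the inductive hypothesis requires the intermediate complement $X_q$ to have nondegenerate Thurston norm, and the paper never verifies this; its only concession is the parenthetical remark that at the bottom of the induction either Theorem~\ref{2compthm} applies in $\widetilde{Y}$ or $\widetilde{Y}\setminus\nu(L_1\sqcup L_2)$ is degenerate, which does not cover degeneracy appearing at an intermediate stage (the theorem's escape clause names only the final $\widetilde{Y}\setminus\nu(L_1\sqcup L_2)$, and a zero-norm class in $X_q$ can die under further capping, so intermediate degeneracy neither implies the final one nor is excluded). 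Your reorganization --- one application of Theorem~\ref{fillonethm2} with $k=n-2$, whose hypotheses concern only $X$ and $Y$ and never the filled manifold, followed by the dichotomy on $\widetilde{X}$ and then Theorem~\ref{2compthm} --- sidesteps this entirely, and also trims the exceptional loci to conditions on the final surgered manifold only: nonvanishing of the determinant of the $(n-2)\times(n-2)$ surgery matrix and of $\lk_{\widetilde{Y}}(L_1,L_2)$ viewed as a rational function of the slopes $(q_3,\ldots,q_n)$, each a codimension-one condition and hence an $(n-2)$-dimensional set of rays (state these for the simultaneous surgery rather than one component at a time, as your third paragraph does). What the two routes share is the essential content: everything reduces to the $2$-component theorem after filling the other components, with Theorem~\ref{fillonethm2} (used once by you, implicitly $n-2$ times by the paper) transporting norm-minimality across the fillings; the paper's induction buys nothing essential in exchange for the nondegeneracy gap. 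The residual informality you flag --- that a union of fiberwise-finite exceptional sets over the $(n-2)$-dimensional space of filling slopes is genuinely $(n-2)$-dimensional --- is present in the paper at exactly the same level (its union of the sets $E_q$ over $q$ is asserted, not proved, to be $(n-2)$-dimensional), so your proposal meets the paper's own standard of rigor there.
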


\begin{proof}[Proof of Theorem~\ref{ncompthm}]


Let $E$ contain the corners of the Thurston norm $\mu$. Since $\mu$ is nondegenerate, the corners of $\mu$ are contained in an $(n-2)$-dimensional set of rays in $H_2(X,\boundary X;\R)$ (e.g. if $n=3$, then the unit ball of $\mu$ is a $3$-dimensional polyhedron. The corners are scalar multiples of vertices and edges, so live in a $1$-dimensional set of rays).

Assume $S\not\in E$, so $c[S]=a[R]+b[T]$ for two adjacent corners $[R],[T]$ and positive integers $a,b,c$. Choose $[R]$ and $[T]$ so that $\boundary R$ and $\boundary T$ meet $P_i$ in distinct (nonmultiple) slopes for each $i=1,\ldots, n$ (this holds outside of some $(n-1)$-dimensional subspaces of $H_2(X,\boundary X;\R)$ consisting of an $(n-2)$-dimensional collection of rays). Take $R$ and $T$ to be properly norm-minimizing surfaces with $S=aR+bT$.

Let $E$ contain all classes $\alpha\in H_2(X,\boundary X;\R)$ with $\boundary\alpha\cap P_n$ of slope zero. Since $\lk(L_n,L_j)\neq 0$ for each $j<n$, this another $(n-2)$-dimensional collection of rays. We add these classes to $E$ to ensure that Dehn-filling $P_n$ according to $\boundary S$ yields a link complement in a rational homology sphere. 

Let $\lk_{ij}:=\lk(L_i,L_j)$. Note that after surgery of slope $q\in\Q$ on $L_n$, for $i\neq j>n$ the link components $L_i,L_j$ become $\widetilde{L}_i,\widetilde{L}_j\subset Y_q(L_n)$ with $\lk(\widetilde{L}_i,\widetilde{L}_j)=\lk_{ij}-q\lk_{in}\lk_{jn}$. For each $i\neq j<n$, let $E$ also contain all $\alpha\in H_2(X,\boundary X;\R)$ with $\boundary\alpha\cap P_n$ of slope $\lk_{ij}/(\lk_{in}\lk_{jn})$. This is similarly another (finite collection of) $(n-2)$-dimensional collection of rays. We add these classes to $E$ to ensure that Dehn-filling $P_n$ according to $\boundary S$ yields a link complement of a link whose linking numbers are pairwise nonzero.

Let $X_q$ be the result of Dehn-filling $P_n$ with slope $q$.

The claim holds for $n=2$ by Theorem~\ref{2compthm}. (Note either Theorem~\ref{2compthm} applies in $\widetilde{Y}$, or $\widetilde{Y}$ has degenerate norm and the claim follows automatically.)  As an inductive hypothesis, assume the theorem holds for $(n-1)$-component links, e.g. $L\setminus L_n\subset Y_{q}(L_n)$. 
Then there is an $(n-3)$-dimensional collection of rays $F$ of $H_2(X_q,\boundary X_q;\R)$ so that if $[S_q]$ is not in a ray in $F$, then $\widehat{S_q}$ is norm-minimizing (where $S_q$ is some properly norm-minimizing surface in $X_q$). For $q\in\Q\cup\{\pm \infty\}\setminus\{0\}$, let $E_q$ contain rays in $H_2(X_q,\boundary X_q;\R)$ whose elements $\beta$ meet $P_n$ in slope $q$ and map to elements of $F$ after capping off $\beta\cap P_n$ in $X_q$. Then $E_q$ is an $(n-3)$-dimensional collection of rays, and the union of all $E_q$ across $q\not\in\{0,\lk_{ij}/(\lk_{in}\lk_{jn})\text{ for some $i\neq j<n$}\}$ is an $(n-2)$-dimensional collection of rays.

Let $E\supset E'$. Then if $[S]$ is not in $E$, we find $\widehat{S}$ is norm-minimizing.

\end{proof}

\bibliographystyle{abbrv}

\end{document}